\newtheorem{theorem}{Theorem}
\newtheorem{corollary}[theorem]{Corollary}
\newtheorem{lemma}[theorem]{Lemma}
\newtheorem{proposition}[theorem]{Proposition}
\newtheorem{remark}[theorem]{Remark}
\newenvironment{proof}[1][Proof]{\noindent\textbf{#1.} }{\ \rule{0.5em}{0.5em}}
\DeclareMathOperator{\var}{Var}
\DeclareMathOperator{\cov}{Cov}
\DeclareMathOperator{\ent}{Ent}
\DeclareMathOperator{\Supp}{Supp}
\DeclareMathOperator{\Hess}{Hess}
\DeclareMathOperator{\ess}{ess}
\DeclareMathOperator{\Int}{Int}
\begin{document}

\title{Weighted Poincar\'{e} inequalities, concentration inequalities and
tail bounds related to Stein kernels in dimension one}
\author{Adrien Saumard \\
CREST-ENSAI, Universit\'{e} Bretagne Loire}
\maketitle

\begin{abstract}
We investigate links between the so-called Stein's density approach in
dimension one and some functional and concentration inequalities. We show
that measures having a finite first moment and a density with connected
support satisfy a weighted Poincar\'{e} inequality with the weight being the
Stein kernel, that indeed exists and is unique in this case. Furthermore, we
prove weighted log-Sobolev and asymmetric Brascamp-Lieb type inequalities
related to Stein kernels. We also show that existence of a uniformly bounded
Stein kernel is sufficient to ensure a positive Cheeger isoperimetric
constant. Then we derive new concentration inequalities. In particular, we
prove generalized Mills' type inequalities when a Stein kernel is uniformly
bounded and sub-gamma concentration for Lipschitz functions of a variable
with a sub-linear Stein kernel. When some exponential moments are finite, a
general concentration inequality is then expressed in terms of
Legendre-Fenchel transform of the Laplace transform of the Stein kernel.
Along the way, we prove a general lemma for bounding the Laplace transform
of a random variable, that should be useful in many other contexts when
deriving concentration inequalities. Finally, we provide density and tail
formulas as well as tail bounds, generalizing previous results that where
obtained in the context of Malliavin calculus.

\noindent
\end{abstract}

\section{Introduction}

Since its introduction by Charles Stein (%
\cite{MR0402873, MR882007}%
), the so-called Stein's method is a corpus of techniques that revealed
itself very successful in studying probability approximation and convergence
in law (see for instance 
\cite{MR2732624, MR3727600, MR3595350}
and references therein). Much less is known regarding the interplay between
Stein's method and functional inequalities. Recently, a series of papers (%
\cite{LedoucNourdinPeccati:15,  MR3665794,  MR3682667, courtade2017existence}%
) started to fill this gap.

More precisely, Ledoux et al. \cite{LedoucNourdinPeccati:15} provide some
improvement of the log-Sobolev inequality and Talagrand's quadratic
transportation cost inequality through the use of a Stein kernel and in
particular, the Stein discrepancy that measures the closeness of the Stein
kernels to identity. In a second paper \cite{MR3665794}, these authors also
provide a lower bound of the deficit in the Gaussian log-Sobolev inequality
in terms Stein's characterization of the Gaussian distribution. Recently,
Fathi and Nelson \cite{MR3682667} also consider free Stein kernel and use it
to improve the free log-Sobolev inequality. Finally, Courtade et al. \cite%
{courtade2017existence} proved that the existence a reversed weighted Poincar%
\'{e} inequality is sufficient to ensure existence of a Stein kernel. To do
so, they use an elegant argument based on the Lax-Milgram theorem. They also
provide bounds on the Stein discrepancy and application to a quantitative
central limit theorem.

Particularizing to dimension one, the present paper aims at pursuing
investigations about the relations between Stein's method - especially Stein
kernels - and some functional inequalities, together with some concentration
inequalities. The limitation to dimension one comes, for most of the
results, from the one-dimensional nature of the covariance identities given
in Section $2$, that are instrumental for the rest of the paper and which
crucially rely on the use of the properties of the cumulative distribution
function.

We prove that a measure $\nu $ having a finite first moment and a density
with connected support satisfies a weighted Poincar\'{e} inequality in the
sense of \cite{MR2510011}, with the weight being the Stein kernel $\tau
_{\nu }$ (see the definition in Section \ref{section_cov_identity}\ below),
that is unique in this case. More precisely, for any $f\in L_{2}\left( \nu
\right) $, absolutely continuous, we have%
\begin{equation}
\var%
\left( f\left( X\right) \right) \leq \mathbb{E}\left[ \tau _{\nu }\left(
X\right) \left( f^{\prime }\left( X\right) \right) ^{2}\right] \text{ .}
\label{intro_ineq_weight_Poin}
\end{equation}

The latter inequality allows us to recover by different techniques some
weighted Poincar\'{e} inequalities previously established in \cite%
{BobkovLedoux:14}\ for the Beta distribution or in \cite{MR3519678}\ for the
generalized Cauchy distribution and to highlight new ones, considering for
instance Pearson's class of distributions.

It is also well-known that Muckenhoupt-type criteria characterize (weighted)
Poincar\'{e} and log-Sobolev inequalities on the real line (%
\cite{MR1845806, MR1682772}%
). We indeed recover, up to a multiplicative constant, inequality (\ref%
{intro_ineq_weight_Poin}) from the classical Muckenhoupt criterion.
Furthermore, using the criterion first established by Bobkov and G\"{o}tze 
\cite{MR1682772} to characterize log-Sobolev inequalities on the real line,
we prove that, under the conditions ensuring the weighted Poincar\'{e}
inequality (\ref{intro_ineq_weight_Poin}), together with some asymptotic
assumptions on the behavior of the Stein kernel around the edges of the
support of the measure $\nu $, the following inequality holds,%
\begin{equation}
\ent%
_{\nu }\left( g^{2}\right) \leq C_{\nu }\int \tau _{\nu }^{2}\left(
g^{\prime }\right) ^{2}d\nu \text{ ,}  \label{intro_ineq_weight_log_Sob}
\end{equation}%
for some constant $C_{\nu }>0$ and with $%
\ent%
_{\nu }\left( g^{2}\right) =\int g^{2}\log g^{2}d\nu -\int g^{2}d\nu \log
\int g^{2}d\nu $. More precisely, if the support of $\nu $ is compact, then
inequality (\ref{intro_ineq_weight_log_Sob}) is valid if $\tau _{\nu }^{-1}$
is integrable at the edges of the support. If on contrary, an edge of the
support is infinite, then a necessary condition for inequality (\ref%
{intro_ineq_weight_log_Sob}) to hold is that the Stein kernel does not tend
to zero around this edge.

We also derive asymmetric Brascamp-Lieb type inequalities related to the
Stein kernel and show that existence of a uniformly bounded Stein kernel is
sufficient to ensure a positive Cheeger isoperimetric constant.

There is also a growing literature, initiated by Chatterjee (\cite{MR2288072}%
), about the links between Stein's method and concentration inequalities.
Several approaches are considered, from the method of exchangeable pairs (%
\cite{MR2288072,MR2683635, MR3189061,  MR3551202}%
), to the density approach coupled with Malliavin calculus (%
\cite{MR2556018, MR2568291, MR3380095, MR3352331}%
), size biased coupling (%
\cite{MR2773032,MR2763529,MR2832913,MR3758727}%
), zero bias coupling (\cite{MR3162712}) or more general Stein couplings (%
\cite{barbour2018central}). As emphasized for instance in the survey by
Chatterjee \cite{MR3727600}, one major strength of Stein-type methodologies
applied to concentration of measure is that it often allows to deal with
dependent and complex system of random variables, finding for instance
applications in statistical mechanics or in random graph theory.

In the present work, we investigate relationships between Stein kernels and
concentration of measure by building upon ideas and exporting techniques
about the use of covariance identities for Gaussian concentration from
Bobkov, G\"{o}tze and Houdr\'{e} \cite{MR1836739}.

Considering first the case where a Stein kernel is uniformly bounded, we
recover the well-known fact that the associated random variable admits a
sub-Gaussian behavior. But we also prove in this setting some refined
concentration inequalities, that we call generalized Mills' type
inequalities, in reference to the classical Mills' inequality for the normal
distribution (see for instance \cite{duembgen2010bounding}). Assume also
that a Stein kernel $\tau _{v}$ exists for the measure $\nu $, is uniformly
bounded, and denote $c=\left\Vert \tau _{v}\right\Vert _{\infty }^{-1}$.
Then the Furthermore, the function $T_{g}\left( r\right) =e^{cr^{2}/2}%
\mathbb{E}\left( g-\mathbb{E}g\right) \mathbf{1}_{\left\{ g-\mathbb{E}g\geq
r\right\} }$ is non-increasing in $r\geq 0$. In particular, for all $r>0$,%
\begin{equation}
\mathbb{P}\left( g-\mathbb{E}g\geq r\right) \leq \mathbb{E}\left( g-\mathbb{E%
}g\right) _{+}\frac{e^{-cr^{2}/2}}{r}\text{ .}  \label{intro_concen_droite}
\end{equation}

In particular, Beta distributions have a bounded Stein kernel and our
concentration inequalities improve on previously best known concentration
inequalities for Beta distributions, recently due to Bobkov and Ledoux \cite%
{BobkovLedoux:14}.

Furthermore, we consider the situation where a Stein kernel has a sub-linear
behavior, recovering and extending in this case sub-Gamma concentration
previously established by Nourdin and Viens \cite{MR2556018}. We also prove
some generalized Mills' type inequalities in this case. More generally, we
prove that the Laplace transform of a Stein kernel controls the Laplace
transform of a Lipschitz function taken on the related distribution. Take $f$
a $1$-Lipschitz function with mean zero with respect to $\nu $ and assume
that $f$ has an exponential moment with respect to $\nu $, that is there
exists $a>0$ such that $\mathbb{E}\left[ e^{af\left( X\right) }\right]
<+\infty $. Then for any $\lambda \in \left( 0,a\right) $,%
\begin{equation}
\mathbb{E}\left[ e^{\lambda f\left( X\right) }\right] \leq \mathbb{E}\left[
e^{\lambda ^{2}\tau _{\nu }\left( X\right) }\right] \text{ .}
\label{intro_Laplace_ineq}
\end{equation}%
It is worth noting that to prove such a result, we state a generic lemma -
Lemma \ref{Lemma_upper_Laplace}\ Section \ref{section_concentration} -
allowing to bound the Laplace transform of a random variable. We believe
that this lemma has an interest by itself, as it may be very convenient when
dealing with Chernoff's method in general.

We also obtain lower tail bounds without the need of Malliavin calculus,
thus extending previous results due to Nourdin and Viens \cite{MR2556018}
and Viens \cite{MR2568291}.

The paper is organized as follows. In Section \ref{section_cov_identity} we
introduce\ some background material, by discussing some well-known and new
formulas for Stein kernels and Stein factors in connection with Menz and
Otto's covariance identity. We also provide formulas involving the Stein
operator, for densities and tails. Then we prove in Section \ref%
{section_weighted_func_ineq} some (weighted) functional inequalities linked
to the behavior of the Stein kernel. In Section \ref{section_concentration}\
we make use of some covariance inequalities to derive various concentration
inequalities for Lipschitz functions of a random variable having a Stein
kernel. Finally, we prove some tail bounds related to the behavior of the
Stein kernel - assumed to be unique - in Section \ref%
{section_density_formula_tail_bounds}.

\section{On covariance identities and the Stein kernel\label%
{section_cov_identity}}

Take a real random variable $X$ of distribution $\nu $ with density $p$ with
respect to the Lebesgue measure on $\mathbb{R}$ and cumulative distribution
function $F$. Assume that the mean of the distribution $\nu $ exists and
denote it by $\mu =\mathbb{E}\left[ X\right] $. Denote also $%
\Supp%
\left( \nu \right) =\overline{\left\{ x\in \mathbb{R}:p\left( x\right)
>0\right\} }\subset \mathbb{\bar{R}(}:=\mathbb{R\cup }\left\{ -\infty
,+\infty \right\} )$ the support of the measure $\nu $, defined as the
closure of the set where the density is positive and assume that this
support is connected. We denote by $a\in \mathbb{R\cup }\left\{ -\infty
\right\} $, $b\in \mathbb{R}\cup \left\{ +\infty \right\} $, $a<b$, the
edges of $%
\Supp%
\left( \nu \right) $. For convenience, we also denote by $I\left( \nu
\right) =%
\Int%
\left( 
\Supp%
\left( \nu \right) \right) $ the interior of the support of $\nu $. The
distribution $\nu $ is said to have a Stein kernel $\tau _{\nu }$, if the
following identity holds true,

\begin{equation}
\mathbb{E}\left[ \left( X-\mu \right) \varphi \left( X\right) \right] =%
\mathbb{E}\left[ \tau _{\nu }\left( X\right) \varphi ^{\prime }\left(
X\right) \right] \text{ ,}  \label{stein_kernel_equation}
\end{equation}%
with $\varphi $ being any differentiable test function such that the
functions $x\mapsto \left( x-\mu \right) \varphi \left( x\right) $ and $%
x\mapsto \tau _{\nu }\left( x\right) \varphi ^{\prime }\left( x\right) $ are 
$\nu $-integrable and $\left[ \tau _{\nu }p\varphi \right] _{a}^{b}=0$. It
is well-known (\cite{LedoucNourdinPeccati:15,courtade2017existence,MR3595350}%
), that under our assumptions the Stein kernel $\tau _{\nu }$ exists, is
unique up to sets of $\nu $-measure zero and a version of the latter is
given by the following formula,%
\begin{equation}
\tau _{v}\left( x\right) =\frac{1}{p\left( x\right) }\int_{x}^{\infty
}\left( y-\mu \right) p\left( y\right) dy\text{ ,}  \label{kernel_formula}
\end{equation}%
for any $x\in I\left( \nu \right) $. Formula (\ref{kernel_formula}) comes
from a simple integration by parts. Notice that $\tau _{\nu }$ is almost
surely positive on the interior of the support of $\nu $.

Although we will focus only on dimension one, it is worth noting that the
definition of a Stein kernel extends to higher dimension, where it is
matrix-valued.\ The question of existence of the Stein kernel for a
particular multi-dimensional measure $\nu $ is nontrivial and only a few
general results are known related to this problem (see for instance \cite%
{LedoucNourdinPeccati:15} \cite{courtade2017existence} and \cite%
{fathi2018stein}). In particular, \cite{courtade2017existence} proves that
the existence of a Stein kernel is ensured whenever a (converse weighted)
Poincar\'{e} inequality is satisfied for the probability measure $\nu $.
Recently, Stein kernels that are positive definite matrices have been
constructed in \cite{fathi2018stein} using transportation techniques.

In this section, that essentially aims at stating some background results
that will be instrumental for the rest of the paper, we will among other
things recover Identity (\ref{kernel_formula}) and introduce a new formula
for the Stein kernel by means of a covariance identity recently obtained in 
\cite{Menz-Otto:2013}\ and further developed in \cite{saumwellner2017efron}.
It actually appears that Menz and Otto's covariance identity is a
consequence of an old result by Hoeffding (see the discussion in \cite%
{Saumard-Wellner:17}).

We define a non-negative and symmetric kernel $k_{\nu }$ on $\mathbb{R}^{2}$
by 
\begin{equation}
k_{\nu }\left( x,y\right) =F\left( x\wedge y\right) -F\left( x\right)
F\left( y\right) ,\qquad \mbox{for all}\left( x,y\right) \in \mathbb{R}^{2}.
\label{def_kernel}
\end{equation}%
For any $p\in \left[ 1,+\infty \right] $, we denote by $L_{p}\left( \nu
\right) $ the space of measurable functions $f$ such that $\left\Vert
f\right\Vert _{p}^{p}=\int \left\vert f\right\vert ^{p}d\nu <+\infty $ for $%
p\in \left[ 1,+\infty \right) $ and $\left\Vert f\right\Vert _{\infty }=%
\ess%
\sup_{x\in \mathbb{R}}\left\vert f\left( x\right) \right\vert <+\infty $ for 
$p=+\infty $. If $f\in L_{p}\left( \nu \right) $, $g\in L_{q}\left( \nu
\right) $, $p^{-1}+q^{-1}=1$, we also write%
\begin{equation*}
\cov%
\left( f,g\right) =\int \left( f-\int fd\nu \right) gd\nu 
\end{equation*}%
the covariance of $f$ and $g$ with respect to $\nu $. For $f\in L_{2}\left(
\nu \right) $, we write $%
\var%
\left( f\right) =%
\cov%
\left( f,f\right) $ the variance of $f$ with respect to $\nu $. For a random
variable $X$ of distribution $\nu $, we will also write $\mathbb{E}\left[
h\left( X\right) \right] =\mathbb{E}\left[ h\right] =\int hd\nu $.

\begin{proposition}[Corollary 2.2, \protect\cite{saumwellner2017efron}]
\label{theorem_cov_id_kernel} If $g$ and $h$ are absolutely continuous and $%
g\in L_{p}(\nu )$, $h\in L_{q}(\nu )$ for some $p\in \lbrack 1,\infty ]$ and 
$p^{-1}+q^{-1}=1$, then 
\begin{equation}
\cov(g,h)=\int \!\!\!\int_{\mathbb{R}^{2}}g^{\prime }(x)k_{\nu
}(x,y)h^{\prime }(y)dxdy\text{.}  \label{cov_id_kernel}
\end{equation}
\end{proposition}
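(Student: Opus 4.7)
My plan is to combine the classical iid symmetrization of the covariance with the fundamental theorem of calculus, writing each function as an integral of its derivative against a Heaviside jump. The shape of $k_\nu$ is suggestive here: since $k_\nu(s,t)=F(s\wedge t)-F(s)F(t)$ is exactly $\cov(\mathbf{1}_{s\leq X},\mathbf{1}_{t\leq X})$ for $X\sim\nu$, identity (\ref{cov_id_kernel}) should drop out automatically once the decomposition is in place and Fubini is applied.

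First, I take $X,Y$ independent with law $\nu$ and use the standard identity $\cov(g,h)=\tfrac12\,\mathbb{E}[(g(X)-g(Y))(h(X)-h(Y))]$, obtained by expanding the product and invoking independence. Second, absolute continuity of $g$ gives, for any $x,y\in\mathbb{R}$,
\begin{equation*}
g(x)-g(y)=\int_{\mathbb{R}} g'(s)\bigl[\mathbf{1}_{s\leq x}-\mathbf{1}_{s\leq y}\bigr]\,ds,
\end{equation*}
which is verified by distinguishing $x\geq y$ from $x<y$: in either case the bracket reduces to a signed indicator of $[\min(x,y),\max(x,y)]$ and the right-hand side equals $g(x)-g(y)$ by the FTC. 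The analogous identity holds for $h$ with an independent integration variable $t$.

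Next, I substitute both identities into the symmetrized covariance and exchange the expectation with the double integral by Fubini to get
\begin{equation*}
\cov(g,h)=\tfrac12\iint_{\mathbb{R}^2} g'(s)h'(t)\,\mathbb{E}\!\bigl[(\mathbf{1}_{s\leq X}-\mathbf{1}_{s\leq Y})(\mathbf{1}_{t\leq X}-\mathbf{1}_{t\leq Y})\bigr]\,ds\,dt.
\end{equation*}
By independence of $X,Y$, the inner expectation equals $2\bigl[\mathbb{P}(X\geq s\vee t)-\mathbb{P}(X\geq s)\mathbb{P}(X\geq t)\bigr]$; writing $\mathbb{P}(X\geq u)=1-F(u)$ and using the elementary identity $F(s)+F(t)=F(s\wedge t)+F(s\vee t)$, this simplifies to $2[F(s\wedge t)-F(s)F(t)]=2k_\nu(s,t)$, so the factor $2$ cancels the $\tfrac12$ and (\ref{cov_id_kernel}) follows.

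The one technical obstacle I expect is the Fubini step, since one must secure absolute convergence of $\iint |g'(s)||h'(t)|k_\nu(s,t)\,ds\,dt$, whereas the hypotheses only control the $L_p,L_q$ norms of $g,h$ and give no a priori integrability of $g'$ or $h'$. The cleanest workaround is to first prove (\ref{cov_id_kernel}) for bounded $C^1$ functions with compact support contained in $I(\nu)$ (where every integral in sight is trivially finite and Fubini is automatic) and then extend to the general case by a truncation/approximation argument, using dominated convergence on both sides together with the bound $k_\nu(s,t)\leq\sqrt{F(s)(1-F(s))}\sqrt{F(t)(1-F(t))}$ and Hölder's inequality for the conjugate pair $(p,q)$ to dominate the approximating integrands.
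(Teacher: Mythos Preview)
The paper does not prove this proposition; it is quoted as Corollary~2.2 of \cite{saumwellner2017efron} and used as a background tool, so there is no in-paper argument to compare against. The paper does remark, immediately after the statement, that the identity ``is a consequence of an old result by Hoeffding,'' and your approach is exactly the Hoeffding route: symmetrize the covariance with an independent copy, write $g(X)-g(Y)$ and $h(X)-h(Y)$ as integrals of derivatives against signed indicators via the fundamental theorem of calculus, and recognize the resulting expectation of indicator products as $2k_\nu$. Your algebra in that last step is correct.

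The only genuine soft spot is the one you already identify: the Fubini/approximation step. The hypotheses $g\in L_p(\nu)$, $h\in L_q(\nu)$ do not directly control $\iint|g'||h'|k_\nu$, and your proposed domination via $k_\nu(s,t)\le\sqrt{F(s)(1-F(s))}\sqrt{F(t)(1-F(t))}$ plus H\"older still requires you to bound quantities like $\int|g'(s)|\sqrt{F(s)(1-F(s))}\,ds$ in terms of $\|g\|_p$, which is not immediate and needs something like a Hardy-type inequality or an integration-by-parts trick to pass back from $g'$ to $g$. This is doable but not a one-liner; the source \cite{saumwellner2017efron} (and the companion \cite{Saumard-Wellner:17} the paper points to) handle it by first establishing the one-sided identity of Corollary~\ref{corollary_formula_kernel} and building up from there, which sidesteps the need to control both derivatives simultaneously. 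Your overall strategy is sound, but if you want a self-contained write-up you should spell out that passage more carefully.
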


It is worth mentioning that the covariance identity (\ref{cov_id_kernel})
heavily relies on dimension one, since it uses the properties of the
cumulative distribution function $F$ through the kernel $k_{\nu }$. In
dimension greater than or equal to $2$, a covariance identity of the form of
(\ref{cov_id_kernel}) - with derivatives replaced by gradients -, would
actually imply that the measure $\nu $ is Gaussian (for more details, see 
\cite{MR1836739} and also Remark \ref{remark_learning} below).

\begin{remark}
\label{remark_learning}In the context of goodness-of-fit tests, Liu et al. 
\cite{liu2016kernelized} introduce the notion of kernelized Stein
discrepancy as follows. If $K\left( x,y\right) $ is a kernel on $\mathbb{R}%
^{2}$, $p$ and $q$ are two densities and $\left( X,Y\right) $ is a pair of
independent random variables distributed according to $p$, then the
kernelized Stein discrepancy $\mathbb{S}_{K}\left( p,q\right) $ between $p$
and $q$ related to $K$ is%
\begin{equation*}
\mathbb{S}_{K}\left( p,q\right) =\mathbb{E}\left[ \delta _{q,p}\left(
X\right) K\left( X,Y\right) \delta _{q,p}\left( Y\right) \right] \text{ ,}
\end{equation*}%
where $\delta _{q,p}\left( x\right) =\left( \log q\left( x\right) \right)
^{\prime }-\left( \log p\left( x\right) \right) ^{\prime }$ is the
difference between scores of $p$ and $q$. This notion is in fact presented
in \cite{liu2016kernelized} in higher dimension and is used as an efficient
tool to assess the proximity of the laws $p$ and $q$. From formula (\ref%
{cov_id_kernel}), we see that if we take $K_{\nu }\left( x,y\right) =k_{\nu
}(x,y)p_{\nu }\left( x\right) ^{-1}p_{\nu }\left( y\right) ^{-1}$, then we
get the following formula, valid in dimension one,%
\begin{equation*}
\mathbb{S}_{K_{\nu }}\left( p,q\right) =%
\var%
_{\nu }\left( \log \left( \frac{p}{q}\right) \right) \text{ .}
\end{equation*}%
In higher dimension, Bobkov et al. \cite{MR1836739} proved that the Gaussian
measures satisfy a covariance identity of the same form as in (\ref%
{cov_id_kernel}) above, with derivatives replaced by gradients. More
precisely, let $\left( X,Y\right) $ be a pair of independent normalized
Gaussian vectors in $\mathbb{R}^{d}$, let $\mu _{\alpha }$ be the measure of
the pair $\left( X,\alpha X+\sqrt{1-\alpha ^{2}}Y\right) $ and let $%
p_{N}\left( x,y\right) $ be the density associated the measure $%
\int_{0}^{1}\mu _{\alpha }d\alpha $. Then we have%
\begin{equation*}
\cov%
\left( g\left( X\right) ,h\left( X\right) \right) =\int \!\!\!\int_{\mathbb{R%
}^{2}}\nabla g(x)^{T}p_{N}(x,y)\nabla h(y)dxdy\text{ .}
\end{equation*}%
This gives that for a kernel $K_{N}\left( x,y\right) =p_{N}\left( x,y\right)
\varphi ^{-1}\left( x\right) \varphi ^{-1}\left( y\right) $, where $\varphi $
is the standard normal density on $\mathbb{R}^{d}$, we also have%
\begin{equation*}
\mathbb{S}_{K_{N}}\left( p,q\right) =%
\var%
\left( \log \left( \frac{p}{q}\right) \left( X\right) \right) \text{ .}
\end{equation*}
\end{remark}

The following formulas will also be useful. They can be seen as special
instances of the previous covariance representation formula.

\begin{corollary}[Corollary 2.1, \protect\cite{saumwellner2017efron}]
\label{corollary_formula_kernel}For an absolutely continuous function $h\in
L_{1}(F)$, 
\begin{equation}
F(z)\int_{\mathbb{R}}hd\nu -\int_{-\infty }^{z}hd\nu =\int_{\mathbb{R}%
}k_{\nu }\left( z,y\right) h^{\prime }(y)dy  \label{rep_g_prime_L1-MRLa}
\end{equation}%
and 
\begin{equation}
-(1-F(z))\int_{\mathbb{R}}hd\nu +\int_{(z,\infty )}hd\nu =\int_{\mathbb{R}%
}k_{\nu }\left( z,y\right) h^{\prime }(y)dy.  \label{rep_g_prime_L1-MRLb}
\end{equation}
\end{corollary}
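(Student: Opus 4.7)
The plan is to read Corollary \ref{corollary_formula_kernel} as the specialization of the covariance identity in Proposition \ref{theorem_cov_id_kernel} to the case where one of the two functions is (an approximation of) an indicator. More precisely, I would take $g(x) = -\mathbf{1}_{(-\infty,z]}(x)$, so that formally $g'(x)\,dx = -\delta_z$; then
$$\cov(g,h) = F(z)\int h\,d\nu - \int_{-\infty}^z h\,d\nu,$$
while the double integral in (\ref{cov_id_kernel}) collapses to $\int_{\mathbb{R}} k_\nu(z,y) h'(y)\,dy$, giving (\ref{rep_g_prime_L1-MRLa}). Identity (\ref{rep_g_prime_L1-MRLb}) then follows immediately from (\ref{rep_g_prime_L1-MRLa}) and the splitting $\int h\,d\nu = \int_{-\infty}^z h\,d\nu + \int_z^\infty h\,d\nu$, so the whole corollary reduces to justifying this formal computation.

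Since $\mathbf{1}_{(-\infty,z]}$ is not absolutely continuous, the cleanest way I see to make the argument rigorous is a direct verification by Fubini and integration by parts, bypassing the need to approximate. Starting from the right-hand side of (\ref{rep_g_prime_L1-MRLa}), I would split the integral at $z$ using
$$k_\nu(z,y) = \begin{cases} F(y)(1-F(z)), & y \le z, \\ F(z)(1-F(y)), & y > z, \end{cases}$$
so that
$$\int_{\mathbb{R}} k_\nu(z,y) h'(y)\,dy = (1-F(z))\int_{-\infty}^z F(y) h'(y)\,dy + F(z)\int_z^\infty (1-F(y)) h'(y)\,dy.$$
Integrating by parts in each piece (using absolute continuity of $h$) converts $F(y)h'(y)$ and $(1-F(y))h'(y)$ into $h\,d\nu$ plus $h(z)$-boundary terms; these $h(z)$ contributions cancel against each other, and rearranging what remains with $\int h\,d\nu = \int_{-\infty}^z h\,d\nu + \int_z^\infty h\,d\nu$ produces exactly $F(z)\int h\,d\nu - \int_{-\infty}^z h\,d\nu$.

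The main obstacle is controlling the boundary terms at $\pm\infty$ arising from the integration by parts: one needs $F(y)h(y)\to 0$ as $y\to a$ and $(1-F(y))h(y)\to 0$ as $y\to b$. These are not automatic from $h\in L_1(\nu)$ alone, but they should follow from a standard tail argument combined with absolute continuity, taking the liminf along a sequence and using that $\int_{-\infty}^y h\,d\nu$ and $\int_y^\infty h\,d\nu$ have limits $0$ at the appropriate edges; alternatively one can carry out the approximation route, replacing $\mathbf{1}_{(-\infty,z]}$ by an absolutely continuous cutoff $g_\varepsilon$, applying Proposition \ref{theorem_cov_id_kernel} to $(g_\varepsilon,h)$, and passing to the limit by dominated convergence using $|k_\nu(z,y)| \le F(z)(1-F(z))$. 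In either approach this tail control is the only substantive point; the remainder is bookkeeping.
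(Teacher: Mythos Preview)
Your approach is correct and matches the paper's treatment: the paper does not actually prove this corollary but cites it from \cite{saumwellner2017efron}, adding only the remark that the formulas ``can be seen as special instances of the previous covariance representation formula'' --- precisely your idea of specializing Proposition~\ref{theorem_cov_id_kernel} to $g$ an (approximate) indicator, followed by the direct Fubini/integration-by-parts verification you outline. One minor slip: with $g=-\mathbf{1}_{(-\infty,z]}$ the distributional derivative is $+\delta_z$, not $-\delta_z$ (the function jumps \emph{up} by $1$ at $z$); your subsequent computations are nonetheless consistent with the correct sign.
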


By combining Theorem \ref{theorem_cov_id_kernel}\ and Corollary \ref%
{corollary_formula_kernel}, we get the following covariance identity.

\begin{proposition}
\label{prop_cov_id}Let $\nu $ be a probability measure on $\mathbb{R}$ and $%
p,q\geq 1$ such that $p^{-1}+q^{-1}=1$. Denote $\mathcal{L}h\left( x\right)
=\int_{x}^{\infty }hd\nu -\left( 1-F\left( x\right) \right) \int_{\mathbb{R}%
}hd\nu =F\left( x\right) \int_{\mathbb{R}}hd\nu -\int_{-\infty }^{x}hd\nu $
for every $x\mathbb{\in \mathbb{R}}$. If $g\in L_{p}\left( \nu \right) $ and 
$h\in L_{q}\left( \nu \right) $ are absolutely continuous and if $g^{\prime }%
\mathcal{L}h$ is integrable with respect to the Lebesgue measure, then%
\begin{equation}
\cov%
\left( g,h\right) =\int_{\mathbb{R}}g^{\prime }\left( x\right) \mathcal{L}%
h\left( x\right) dx\text{ ,}  \label{cov_id_2_gene}
\end{equation}%
Furthermore, if $\nu $ has a density $p$ with respect to the Lebesgue
measure that has a connected support, then%
\begin{equation}
\cov%
\left( g,h\right) =\int_{\mathbb{R}}g^{\prime }\left( x\right) \mathcal{\bar{%
L}}h\left( x\right) p\left( x\right) dx=\mathbb{E}\left[ g^{\prime }\left(
X\right) \mathcal{\bar{L}}h\left( X\right) \right] \text{ ,}
\label{cov_id_connected}
\end{equation}%
where, for every $x\in I\left( \nu \right) $,%
\begin{eqnarray}
\mathcal{\bar{L}}h\left( x\right) &=&p\left( x\right) ^{-1}\mathcal{L}%
h\left( x\right)  \notag \\
&=&\frac{1}{p\left( x\right) }\int_{x}^{\infty }hd\nu -\frac{1-F\left(
x\right) }{p\left( x\right) }\mathbb{E}\left[ h\right] \text{ .}
\label{def_calL_bar}
\end{eqnarray}%
If $x\notin I\left( v\right) $, we take $\mathcal{\bar{L}}h\left( x\right)
=0 $.
\end{proposition}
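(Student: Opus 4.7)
The plan is to combine Proposition \ref{theorem_cov_id_kernel} with Corollary \ref{corollary_formula_kernel} by integrating out one variable in the double-integral covariance representation. First I would note that the two expressions given for $\mathcal{L}h(x)$ are trivially equal: writing $\int_{-\infty}^{x}h\,d\nu = \int_{\mathbb{R}}h\,d\nu - \int_{(x,\infty)}h\,d\nu$ immediately gives $F(x)\int h\,d\nu - \int_{-\infty}^{x}h\,d\nu = -(1-F(x))\int h\,d\nu + \int_{(x,\infty)}h\,d\nu$, so the operator $\mathcal{L}$ is well defined. Corollary \ref{corollary_formula_kernel} then identifies both expressions with $\int_{\mathbb{R}}k_{\nu}(x,y)h'(y)dy$, which is the key link.

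Starting from the identity of Proposition \ref{theorem_cov_id_kernel}, namely
\begin{equation*}
\cov(g,h)=\int\!\!\!\int_{\mathbb{R}^{2}}g'(x)k_{\nu}(x,y)h'(y)\,dx\,dy,
\end{equation*}
I would apply Fubini's theorem to interchange the order of integration and collapse the inner integral via Corollary \ref{corollary_formula_kernel}, obtaining $\cov(g,h)=\int_{\mathbb{R}}g'(x)\mathcal{L}h(x)\,dx$, which is (\ref{cov_id_2_gene}). The Fubini step is exactly what the hypothesis that $g'\mathcal{L}h$ be Lebesgue-integrable is designed to legitimize: since $k_{\nu}(x,y)\geq 0$, one first uses Tonelli on the absolute value $|g'(x)|\cdot|h'(y)|k_{\nu}(x,y)$; the resulting inner integral is dominated by $\int k_{\nu}(x,y)|h'(y)|\,dy$, and a sign-splitting of $h'$ together with Corollary \ref{corollary_formula_kernel} expresses this as a finite quantity controlled by $|\mathcal{L}(h^+)(x)|+|\mathcal{L}(h^-)(x)|$, whose integrability against $|g'|$ is essentially the assumption. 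Hence Fubini applies and the first identity follows.

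For the second identity, I would use the assumption that $\nu$ has a density $p$ with connected support, so that $p>0$ on $I(\nu)$. The key observation is that $\mathcal{L}h$ vanishes outside $\overline{\Supp(\nu)}$: indeed, if $x\leq a$ then $F(x)=0$ and $\int_{-\infty}^{x}h\,d\nu =0$, while if $x\geq b$ then $F(x)=1$ and $\int_{-\infty}^{x}h\,d\nu=\int_{\mathbb{R}}h\,d\nu$, in both cases killing $\mathcal{L}h(x)$. Consequently the Lebesgue integral in (\ref{cov_id_2_gene}) reduces to an integral over $I(\nu)$, where one may multiply and divide by $p(x)$ and write
\begin{equation*}
\int_{I(\nu)}g'(x)\,\frac{\mathcal{L}h(x)}{p(x)}\,p(x)\,dx = \mathbb{E}\bigl[g'(X)\bar{\mathcal{L}}h(X)\bigr],
\end{equation*}
with the convention $\bar{\mathcal{L}}h=0$ off $I(\nu)$. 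This yields (\ref{cov_id_connected}).

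The main obstacle, and really the only delicate point, is making the Fubini step rigorous under the stated integrability hypothesis on $g'\mathcal{L}h$ rather than on the joint integrand; the rest is a purely algebraic assembly of the two previously cited results and the elementary observation about the vanishing of $\mathcal{L}h$ at the boundary.
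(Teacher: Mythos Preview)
Your proof is correct and follows essentially the same approach as the paper: apply Fubini to the double-integral identity of Proposition~\ref{theorem_cov_id_kernel}, collapse the inner integral via Corollary~\ref{corollary_formula_kernel} to obtain~(\ref{cov_id_2_gene}), then use the vanishing of $\mathcal{L}h$ outside $I(\nu)$ and divide by $p$ to get~(\ref{cov_id_connected}). You give more detail on the Fubini justification than the paper does, though your sign-splitting argument requires integrability of $|g'|\cdot|\mathcal{L}h^{\pm}|$ rather than just $|g'\mathcal{L}h|$, a gap the paper itself leaves implicit.
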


\begin{proof}
Identity (\ref{cov_id_2_gene}) consists in applying Fubini theorem in the
formula of Theorem \ref{theorem_cov_id_kernel}\ and then using Corollary \ref%
{corollary_formula_kernel}. If $\nu $ has a density $p$ with respect to the
Lebesgue measure that has a connected support then for every $x\notin
I\left( \nu \right) $ we have $\mathcal{L}h\left( x\right) =0$.
Consequently, from Identity (\ref{cov_id_2_gene}) we get, for $g\in
L_{\infty }\left( \nu \right) $, $h\in L_{1}\left( \nu \right) $ absolutely
continuous, 
\begin{eqnarray*}
\cov%
\left( g,h\right) &=&\int_{I\left( \nu \right) }g^{\prime }\left( x\right) 
\mathcal{L}h\left( x\right) dx \\
&=&\int_{I\left( \nu \right) }g^{\prime }\left( x\right) \mathcal{\bar{L}}%
h\left( x\right) p\left( x\right) dx
\end{eqnarray*}%
and so Identity (\ref{cov_id_connected}) is proved.
\end{proof}

From Proposition \ref{prop_cov_id}, we can directly recover formula (\ref%
{kernel_formula}) for the Stein kernel, when it is assumed that the measure
has a connected support and finite first moment. Indeed, by taking $h\left(
x\right) =x-\mu $, we have $h$ $\nu $-integrable and differentiable and so,
for any absolutely continuous function $g\in L_{\infty }\left( \nu \right) $
such that $g^{\prime }\mathcal{\bar{L}}h$ is $\nu $-integrable, applying
Identity (\ref{cov_id_connected}) - since $g^{\prime }\mathcal{L}h=g^{\prime
}\mathcal{\bar{L}}hp$ $\ a.s.$ is Lebesgue integrable - yields 
\begin{equation}
\cov%
\left( g,h\right) =\int_{\mathbb{R}}\left( x-\mu \right) g\left( x\right)
p\left( x\right) dx=\int_{\mathbb{R}}g^{\prime }\mathcal{\bar{L}}hd\nu \text{
.}  \label{cov_id_Lbar}
\end{equation}%
As by a standard approximation argument - i.e. truncations and an
application of the dominated convergence theorem -, identity (\ref%
{cov_id_Lbar}) can be extended to any $g$ such that the functions $x\mapsto
\left( x-\mu \right) g\left( x\right) $ and $x\mapsto \tau _{\nu }\left(
x\right) g^{\prime }\left( x\right) $ are $\nu $-integrable and $\left[ \tau
_{\nu }pg\right] _{a}^{b}=0$, we deduce that a version of the Stein kernel $%
\tau _{v}$ is given by $\mathcal{\bar{L}}h$, which is nothing but the
right-hand side of Identity (\ref{kernel_formula}).

Following the nice recent survey \cite{MR3595350} related to the Stein
method in dimension one, identity (\ref{cov_id_connected}) is exactly the
so-called "generalized Stein covariance identity", written in terms of the
inverse of the Stein operator rather than the Stein operator itself. Indeed,
it is easy to see that the inverse $\mathcal{T}_{\nu }$ of the operator $%
\mathcal{\bar{L}}$ acting on integrable functions with mean zero is given by
the following formula%
\begin{equation*}
\mathcal{T}_{\nu }f=\frac{(fp)^{\prime }}{p}\mathbf{1}_{I\left( \nu \right) }%
\text{ ,}
\end{equation*}%
which is exactly the Stein operator (see Definition 2.1 of \cite{MR3595350}).

It is also well known, see again \cite{MR3595350}, that the inverse of the
Stein operator, that is $\mathcal{\bar{L}}$, is highly involved in deriving
bounds for distances between distributions. From Corollary \ref%
{corollary_formula_kernel}, we have the following seemingly new formula for
this important quantity,%
\begin{equation}
\mathcal{T}_{\nu }^{-1}h\left( x\right) =\mathcal{\bar{L}}h\left( x\right) =%
\frac{1}{p\left( x\right) }\int_{\mathbb{R}}k_{\nu }\left( x,y\right)
h^{\prime }(y)dy\text{ .}  \label{identity_stein_factor}
\end{equation}%
Particularizing the latter identity with $h\left( x\right) =x-\mu $, we
obtain the following identity for the Stein kernel,%
\begin{equation}
\tau _{\nu }\left( x\right) =\frac{1}{p\left( x\right) }\int_{\mathbb{R}%
}k_{\nu }\left( x,y\right) dy\text{ .}  \label{formula_kernel}
\end{equation}%
A consequence of (\ref{formula_kernel}) that will be important in Section %
\ref{section_weighted_func_ineq}\ when deriving weighted functional
inequalities is that for any $x\in I\left( \nu \right) $ the function $%
y\mapsto k_{\nu }\left( x,y\right) (p\left( x\right) \tau _{\nu }\left(
x\right) )^{-1}$ can be seen as the density - with respect to the Lebesgue
measure - of a probability measure, since it is nonnegative and integrates
to one.

We also deduce from (\ref{identity_stein_factor}) the following upper bound,%
\begin{equation*}
\left\vert \mathcal{T}_{\nu }^{-1}h\left( x\right) \right\vert \leq \frac{%
\left\Vert h^{\prime }\right\Vert _{\infty }}{p\left( x\right) }\int_{%
\mathbb{R}}k_{\nu }\left( x,y\right) dy=\frac{\left\Vert h^{\prime
}\right\Vert _{\infty }}{p\left( x\right) }\left( F\left( x\right) \int_{%
\mathbb{R}}xd\nu \left( x\right) -\int_{-\infty }^{x}xd\nu \left( x\right)
\right) \text{ ,}
\end{equation*}%
which is exactly the formula given in Proposition 3.13(a) of \cite{MR3418541}%
.

Let us note $\varphi \left( x\right) =-\log p\left( x\right) $ when $p\left(
x\right) >0$ and $+\infty $ otherwise, the so-called potential of the
density $p$. If on $I\left( \nu \right) $, $\varphi $ has derivative $%
\varphi ^{\prime }\in L_{1}\left( \nu \right) $ absolutely continuous, then
Corollary 2.3 in \cite{saumwellner2017efron} gives 
\begin{equation*}
\int_{\mathbb{R}}k_{\nu }\left( x,y\right) \varphi ^{\prime \prime
}(y)dy=p\left( x\right) \text{ .}
\end{equation*}%
Using the latter identity together with (\ref{identity_stein_factor}), we
deduce the following upper-bound: if $p$ is strictly log-concave (that is $%
\varphi ^{\prime \prime }>0$ on $I\left( \nu \right) $), then%
\begin{equation}
\sup_{x\in I\left( \nu \right) }\left\vert \mathcal{T}_{\nu }^{-1}h\left(
x\right) \right\vert \leq \sup_{x\in I\left( \nu \right) }\frac{\left\vert
h^{\prime }\left( x\right) \right\vert }{\varphi ^{\prime \prime }\left(
x\right) }\text{ .}  \label{ineq_stein_factor_log_con_pot}
\end{equation}%
In particular, if $p$ is $c$-strongly log-concave, meaning that $\varphi
^{\prime \prime }\geq c>0$ on $\mathbb{R}$, then the Stein kernel is
uniformly bounded and $\left\Vert \tau _{\nu }\right\Vert _{\infty }\leq
c^{-1}$. For more about the Stein method related to (strongly) log-concave
measures, see for instance \cite{MR3548768}.

Furthermore, by differentiating (\ref{identity_stein_factor}), we obtain for
any $x\in I\left( \nu \right) $,%
\begin{eqnarray*}
\left( \mathcal{T}_{\nu }^{-1}h\right) ^{\prime }\left( x\right) &=&\varphi
^{\prime }\left( x\right) \mathcal{T}_{\nu }^{-1}h\left( x\right) -h\left(
x\right) -\int_{\mathbb{R}}F\left( y\right) h^{\prime }\left( y\right) dy \\
&=&\varphi ^{\prime }\left( x\right) \mathcal{T}_{\nu }^{-1}h\left( x\right)
-h\left( x\right) +\mathbb{E}\left[ h\left( X\right) \right] \text{ ,}
\end{eqnarray*}%
that is%
\begin{equation*}
\left( \mathcal{T}_{\nu }^{-1}h\right) ^{\prime }\left( x\right) -\varphi
^{\prime }\left( x\right) \mathcal{T}_{\nu }^{-1}h\left( x\right) =-h\left(
x\right) +\mathbb{E}\left[ h\left( X\right) \right] \text{ .}
\end{equation*}%
This is nothing but the so-called Stein equation associated to the Stein
operator.

We conclude this section with the following formulas, that are available
when considering a density with connected support and that will be useful in
the rest of the paper (see in particular Sections \ref{ssection_Muck} and %
\ref{section_density_formula_tail_bounds}).

\begin{proposition}
\label{theorem_formulas}Assume that $X$ is a random variable with
distribution $\nu $ having a density $p$ with connected support with respect
to the Lebesgue measure on $\mathbb{R}$. Take $h\in L_{1}\left( \nu \right) $
with $\mathbb{E}\left[ h\left( X\right) \right] =0$ and assume that the
function $\mathcal{\bar{L}}h$ defined in (\ref{def_calL_bar}) is $\nu $%
-almost surely strictly positive. We have, for any $x_{0},x\in I\left( \nu
\right) $,%
\begin{equation}
p\left( x\right) =\frac{\mathbb{E}\left[ h\left( X\right) 1_{\left\{ X\geq
x_{0}\right\} }\right] }{\mathcal{\bar{L}}h\left( x\right) }\exp \left(
-\int_{x_{0}}^{x}\frac{h\left( y\right) }{\mathcal{\bar{L}}h\left( y\right) }%
dy\right) \text{ .}  \label{formula_density}
\end{equation}%
Consequently, if $X$ has a finite first moment, for any $x\in I\left( \nu
\right) $,%
\begin{equation}
p\left( x\right) =\frac{\mathbb{E}\left[ \left\vert X-\mu \right\vert \right]
}{2\mathcal{\tau }_{\nu }\left( x\right) }\exp \left( -\int_{\mu }^{x}\frac{%
y-\mu }{\mathcal{\tau }_{\nu }\left( y\right) }dy\right) \text{ .}
\label{formula_density_2}
\end{equation}%
By setting $T_{h}\left( x\right) =\exp \left( -\int_{x_{0}}^{x}\frac{h\left(
y\right) }{\mathcal{\bar{L}}h\left( y\right) }dy\right) $ and $I\left( \nu
\right) =\left( a,b\right) $, if the function $h$ is $\nu $-almost positive,
differentiable on $\left( x,b\right) $ and if the ratio $T_{h}\left(
y\right) /h\left( y\right) $ tends to zero when $y$ tends to $b^{-}$, then
we have, for any $x_{0},x\in I\left( \nu \right) $,%
\begin{equation}
\mathbb{P}\left( X\geq x\right) =\mathbb{E}\left[ h\left( X\right)
1_{\left\{ X\geq x_{0}\right\} }\right] \left( \frac{T_{h}\left( x\right) }{%
h\left( x\right) }-\int_{x}^{b}\frac{h^{\prime }\left( y\right) }{%
h^{2}\left( y\right) }T_{h}\left( y\right) dy\right) \text{ .}
\label{formula_tail}
\end{equation}
\end{proposition}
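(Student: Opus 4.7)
The plan is to introduce the auxiliary function $\psi(x) = \bar{\mathcal{L}}h(x)\,p(x)$ and derive an ODE for it, from which the density formula will follow by exponentiation; the tail formula will then come from an integration by parts.

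\textbf{Step 1: Density formula (\ref{formula_density}).} Since $\mathbb{E}[h(X)] = 0$, the definition of $\bar{\mathcal{L}}h$ given in (\ref{def_calL_bar}) simplifies to
\begin{equation*}
\psi(x) := \bar{\mathcal{L}}h(x)\,p(x) = \int_x^\infty h\,d\nu = \mathbb{E}[h(X)\mathbf{1}_{\{X\geq x\}}].
\end{equation*}
Differentiating the right-hand integral in $x$ yields $\psi'(x) = -h(x)p(x)$ for Lebesgue-a.e. $x \in I(\nu)$. Because $\bar{\mathcal{L}}h$ is $\nu$-a.s. strictly positive by assumption (so $\psi > 0$ on $I(\nu)$), we can write
\begin{equation*}
\frac{\psi'(x)}{\psi(x)} = -\frac{h(x)p(x)}{\bar{\mathcal{L}}h(x)\,p(x)} = -\frac{h(x)}{\bar{\mathcal{L}}h(x)}.
\end{equation*}
Integrating from $x_0$ to $x$ and exponentiating gives $\psi(x) = \psi(x_0)\exp\bigl(-\int_{x_0}^x h/\bar{\mathcal{L}}h\bigr)$, and solving for $p(x) = \psi(x)/\bar{\mathcal{L}}h(x)$ produces exactly (\ref{formula_density}).

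\textbf{Step 2: Specialization (\ref{formula_density_2}).} Apply the previous formula with $h(y) = y-\mu$ and $x_0 = \mu$. Then $\mathbb{E}[h(X)] = 0$ is automatic, $\bar{\mathcal{L}}h = \tau_\nu$ by the identification made just after (\ref{cov_id_Lbar}), and $\mathbb{E}[(X-\mu)\mathbf{1}_{\{X\geq \mu\}}] = \mathbb{E}[(X-\mu)_+] = \tfrac{1}{2}\mathbb{E}|X-\mu|$ (since the positive and negative parts of a centered random variable have equal expectation). Substituting gives (\ref{formula_density_2}).

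\textbf{Step 3: Tail formula (\ref{formula_tail}).} Using $\psi'(y) = -h(y)p(y)$, we write
\begin{equation*}
\mathbb{P}(X \geq x) = \int_x^b p(y)\,dy = -\int_x^b \frac{\psi'(y)}{h(y)}\,dy,
\end{equation*}
which is legitimate because $h$ is $\nu$-a.s. strictly positive on $(x,b)$ by assumption. Integrating by parts with $u = 1/h(y)$ and $dv = -\psi'(y)\,dy$ yields
\begin{equation*}
\mathbb{P}(X \geq x) = \left[\frac{\psi(y)}{h(y)}\right]_b^x - \int_x^b \frac{\psi(y)h'(y)}{h^2(y)}\,dy = \frac{\psi(x)}{h(x)} - \lim_{y\to b^-}\frac{\psi(y)}{h(y)} - \int_x^b \frac{\psi(y)h'(y)}{h^2(y)}\,dy.
\end{equation*}
Since $\psi(y) = \psi(x_0)T_h(y)$, the hypothesis $T_h(y)/h(y) \to 0$ as $y \to b^-$ kills the boundary term. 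Substituting $\psi(y) = \mathbb{E}[h(X)\mathbf{1}_{\{X\geq x_0\}}]\,T_h(y)$ in the remaining two terms yields (\ref{formula_tail}).

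The main obstacle is the vanishing of the boundary term in the integration by parts: this is precisely why the extra hypothesis on $T_h/h$ at $b^-$ is imposed, and the rest of the argument is a straightforward chain of calculus manipulations.
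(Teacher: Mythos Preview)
Your proof is correct and follows essentially the same approach as the paper: your function $\psi$ is exactly the paper's $\mathcal{L}h$ (which equals $\mathbb{E}[h(X)\mathbf{1}_{\{X\ge x_0\}}]\,T_h$), and both arguments proceed by computing $(\log\psi)'=-h/\bar{\mathcal{L}}h$ for the density formula and by the same integration by parts $\int_x^b(-\psi'/h)$ for the tail formula.
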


Formula (\ref{formula_density}) can also be found in \cite{MR3418541},
Equation (3.11), under the assumption that $h$ is decreasing and for a
special choice of $x_{0}$. Since $\mathbb{E}\left[ h\left( X\right) \right]
=0$, it is easily seen through its definition (\ref{def_calL_bar}), that if $%
h\neq 0$ $\nu -a.s.$ then $\mathcal{\bar{L}}h>0$ $\nu -a.s.$ When $h=Id-\mu $%
, formulas (\ref{formula_density_2}) and (\ref{formula_tail}) were first
proved respectively\ in \cite{MR2556018} and \cite{MR2568291}, although with
assumption that the random variable $X$ belongs to the space $\mathbf{D}%
^{1,2}$ of square integrable random variables with the natural Hilbert norm
of their Malliavin derivative also square integrable.

In order to take advantage of formulas (\ref{formula_density}) and (\ref%
{formula_tail}), one has to use some information about $\mathcal{\bar{L}}h$.
The most common choice is $h=Id-\mu $, which corresponds to the Stein kernel 
$\mathcal{\bar{L}}\left( Id-\mu \right) =\tau _{\nu }$.

\begin{proof}
Begin with Identity (\ref{formula_density}). As $x_{0}\in I\left( \nu
\right) $ and the function $\mathcal{L}h$ defined in (\ref{prop_cov_id}) is $%
\nu $-almost surely positive, we have for any $x\in I\left( v\right) $,%
\begin{equation*}
\mathcal{L}h\left( x\right) =\mathcal{L}h\left( x_{0}\right) \exp \left(
\int_{x_{0}}^{x}\left( \ln \left( \mathcal{L}h\right) \right) ^{\prime
}\left( y\right) dy\right) \text{ .}
\end{equation*}%
To conclude, note that $\mathcal{L}h\left( x_{0}\right) =\mathbb{E}\left[
h\left( X\right) 1_{\left\{ X\geq x_{0}\right\} }\right] \,$and $\left( \ln
\left( \mathcal{L}h\right) \right) ^{\prime }=-h/\mathcal{\bar{L}}h$. To
prove (\ref{formula_density_2}), simply remark that is follows from (\ref%
{formula_density}) by taking $h=Id-\mu $ and $x_{0}=\mu $.

It remains to prove (\ref{formula_tail}). We have from (\ref{formula_density}%
), $p=\mathbb{E}\left[ h\left( X\right) 1_{\left\{ X\geq x_{0}\right\} }%
\right] T_{h}/\mathcal{\bar{L}}h$ and by definition of $T_{h}$, $%
T_{h}^{\prime }=-hT_{h}/\mathcal{\bar{L}}h$. Hence, integrating between $x$
and $b$ gives%
\begin{eqnarray*}
\mathbb{P}\left( X\geq x\right) &=&\mathbb{E}\left[ h\left( X\right)
1_{\left\{ X\geq x_{0}\right\} }\right] \int_{x}^{b}\frac{T_{h}\left(
y\right) }{\mathcal{\bar{L}}h\left( y\right) }dy \\
&=&\mathbb{E}\left[ h\left( X\right) 1_{\left\{ X\geq x_{0}\right\} }\right]
\int_{x}^{b}\frac{-T_{h}^{\prime }\left( y\right) }{h\left( y\right) }dy \\
&=&\mathbb{E}\left[ h\left( X\right) 1_{\left\{ X\geq x_{0}\right\} }\right]
\left( \left[ \frac{-T_{h}}{h}\right] _{x}^{b}-\int_{x}^{b}\frac{h^{\prime
}\left( y\right) T_{h}\left( y\right) }{h^{2}\left( y\right) }dy\right) \\
&=&\mathbb{E}\left[ h\left( X\right) 1_{\left\{ X\geq x_{0}\right\} }\right]
\left( \frac{T_{h}\left( x\right) }{h\left( x\right) }-\int_{x}^{b}\frac{%
h^{\prime }\left( y\right) T_{h}\left( y\right) }{h^{2}\left( y\right) }%
dy\right) \text{ .}
\end{eqnarray*}
\end{proof}

\section{Some weighted functional inequalities\label%
{section_weighted_func_ineq}}

Weighted functional inequalities appear naturally when generalizing Gaussian
functional inequalities such as Poincar\'{e} and log-Sobolev inequalities.
They were put to emphasis for the generalized Cauchy distribution and more
general $\kappa $-concave distributions by Bobkov and Ledoux 
\cite{MR2510011,MR2797936}%
, also in connection with isoperimetric-type problems, weighted Cheeger-type
inequalities and concentration of measure. Then several authors proved
related weighted functional inequalities (%
\cite{MR2381160,MR3269712,MR3519678,MR3464047,MR2609591,MR3008255,MR3556769,MR2682264}%
). In the following, we show the strong connection between Stein kernels and
the existence of weighted functional inequalities. Note that a remarkable
first result in this direction was recently established by Courtade et al. 
\cite{courtade2017existence}\ who proved that a reversed weighted Poincar%
\'{e} inequality is sufficient to ensure the existence of a Stein kernel in $%
\mathbb{R}^{d}$, $d\geq 1$.

Our results are derived in dimension one. Indeed, the proofs of weighted
Poincar\'{e} inequalities (Section \ref{ssection_weighted_Poin}) and
asymmetric Brascamp-Lieb type inequalities (Section \ref{ssection_asym_BL})
rely on covariance identities stated in Section \ref{section_cov_identity},
that are based on properties of cumulative distribution functions available
in dimension one. Recently, Fathi \cite{fathi2018stein} generalized in
higher dimension the weighted Poincar\'{e} inequalities derived in Section %
\ref{ssection_weighted_Poin} using transportation techniques. We also derive
in Section \ref{ssection_Muck} some weighted log-Sobolev inequalities, that
are derived through the use of some Muckenhoupt-type criteria that are only
valid in dimension one. However, it is natural to conjecture a
multi-dimensional generalization, especially using tools developed in \cite%
{fathi2018stein}, but this remains an open question. Finally, using a
formula for the isoperimetric constant in dimension one due to Bobkov and
Houdr\'{e} \cite{BobHoud97}, we prove that a uniformly bounded Stein kernel
is essentially sufficient to ensure a positive isoperimetric constant.
Again, the problem in higher dimension seems much more involved and is left
as an interesting open question.

\subsection{Weighted Poincar\'{e}-type inequality\label%
{ssection_weighted_Poin}}

According to \cite{MR2510011}, a measure $\nu $ on $\mathbb{R}$ is said to
satisfy a weighted Poincar\'{e} inequality if there exists a nonnegative
measurable weight function $\omega $ such that for any smooth function $f\in
L_{2}\left( \nu \right) $, 
\begin{equation}
\var%
\left( f\left( X\right) \right) \leq \mathbb{E}\left[ \omega \left( X\right)
\left( f^{\prime }\left( X\right) \right) ^{2}\right] \text{ .}
\label{def_weight_Poin}
\end{equation}%
The following theorem shows that a probability measure having a finite first
moment and density with connected support on the real line satisfies a
weighted Poincar\'{e} inequality, with the weight being its Stein kernel.

\begin{theorem}
\label{theorem_Poincare_kernel}Take a real random variable $X$ of
distribution $\nu $ with density $p$ with respect to the Lebesgue measure on 
$\mathbb{R}$. Assume that $\mathbb{E}\left[ \left\vert X\right\vert \right]
<+\infty $, $p$ has a connected support and denote $\tau _{\nu }$ the Stein
kernel of $\nu $. Take $f\in L_{2}\left( \nu \right) $, absolutely
continuous. Then%
\begin{equation}
\var%
\left( f\left( X\right) \right) \leq \mathbb{E}\left[ \tau _{\nu }\left(
X\right) \left( f^{\prime }\left( X\right) \right) ^{2}\right] \text{ .}
\label{ineq_weighted_Poincare}
\end{equation}%
The preceding inequality is optimal whenever $\nu $ admits a finite second
moment, that is $\mathbb{E}\left[ X^{2}\right] <+\infty $, since equality is
reached for $f=Id$, by definition of the Stein kernel.
\end{theorem}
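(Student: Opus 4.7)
The plan is to combine the covariance identity from Proposition \ref{theorem_cov_id_kernel} with the integral representation of the Stein kernel in (\ref{formula_kernel}), and finish with a single Cauchy--Schwarz step applied to the nonnegative symmetric kernel $k_\nu$.

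First I would assume without loss of generality that the right-hand side $\mathbb{E}[\tau_\nu(X)(f'(X))^2]$ is finite (otherwise the inequality is trivial). Applying Proposition \ref{theorem_cov_id_kernel} with $g=h=f$ and $p=q=2$, which is legitimate because $f\in L_2(\nu)$, gives the representation
\begin{equation*}
\var(f(X)) \;=\; \int\!\!\!\int_{\mathbb{R}^2} f'(x)\, k_\nu(x,y)\, f'(y)\, dx\, dy.
\end{equation*}

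Next I would exploit two structural features of $k_\nu$. By its very definition (\ref{def_kernel}), $k_\nu(x,y)=F(x\wedge y)(1-F(x\vee y))\geq 0$ and $k_\nu(x,y)=k_\nu(y,x)$. Hence the bilinear form above is associated with a nonnegative symmetric kernel, and the Cauchy--Schwarz inequality applied inside the double integral yields
\begin{equation*}
\int\!\!\!\int f'(x) f'(y) k_\nu(x,y)\,dx\,dy \;\leq\; \left(\int\!\!\!\int (f'(x))^2 k_\nu(x,y)\,dx\,dy\right)^{1/2}\!\!\left(\int\!\!\!\int (f'(y))^2 k_\nu(x,y)\,dx\,dy\right)^{1/2},
\end{equation*}
and the two factors on the right coincide by the symmetry $k_\nu(x,y)=k_\nu(y,x)$, so the product collapses to $\int\!\!\int (f'(x))^2 k_\nu(x,y)\,dx\,dy$.

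Finally, I would integrate out $y$ using formula (\ref{formula_kernel}), which states that $\int_{\mathbb{R}} k_\nu(x,y)\,dy = p(x)\tau_\nu(x)$. This identifies
\begin{equation*}
\int\!\!\!\int (f'(x))^2 k_\nu(x,y)\,dx\,dy \;=\; \int (f'(x))^2 \tau_\nu(x) p(x)\,dx \;=\; \mathbb{E}\bigl[\tau_\nu(X)(f'(X))^2\bigr],
\end{equation*}
which yields (\ref{ineq_weighted_Poincare}). The optimality claim then follows immediately: for $f=\mathrm{Id}$ the defining identity (\ref{stein_kernel_equation}) applied to the test function $\varphi(x)=x-\mu$ gives $\mathbb{E}[(X-\mu)^2]=\mathbb{E}[\tau_\nu(X)]$, so both sides coincide. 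The only delicate point is justifying Fubini and the Cauchy--Schwarz step when $f$ is merely absolutely continuous; this should be handled by a standard truncation of $f'$ to a bounded compactly supported function and passage to the limit under the dominated convergence theorem, using finiteness of $\mathbb{E}[\tau_\nu(X)(f'(X))^2]$ as the integrable majorant.
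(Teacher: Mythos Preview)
Your proof is correct, and it is in fact a cleaner packaging of the paper's argument. Both proofs rest on the same two ingredients: the Hoeffding--Menz--Otto covariance representation through the nonnegative symmetric kernel $k_\nu$, and the identity $\int k_\nu(x,y)\,dy = \tau_\nu(x)p(x)$. The paper, however, routes through the operator $\bar{\mathcal L}$: it writes $\var(f)=\mathbb{E}[f'\,\bar{\mathcal L}f]$, applies Cauchy--Schwarz in $L_2(\nu)$ with the splitting $\sqrt{\tau_\nu}f'\cdot \bar{\mathcal L}f/\sqrt{\tau_\nu}$, and then needs a second step --- Jensen's inequality for the probability density $y\mapsto k_\nu(x,y)/\!\int k_\nu(x,z)\,dz$ --- to bound $\mathbb{E}[\tau_\nu(\bar{\mathcal L}f/\tau_\nu)^2]$ back by $\mathbb{E}[\tau_\nu (f')^2]$. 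Your single Cauchy--Schwarz in $L_2(k_\nu\,dx\,dy)$, followed by the symmetry of $k_\nu$, fuses those two steps into one and avoids introducing $\bar{\mathcal L}f$ at all. The paper's route has the advantage of making the role of the inverse Stein operator $\bar{\mathcal L}$ explicit, which ties the argument to the rest of the paper's framework; your route buys brevity and makes the mechanism (positive symmetric kernel plus marginal identity) completely transparent.
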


\begin{proof}
We have%
\begin{eqnarray*}
\var%
\left( f\left( X\right) \right) &=&\mathbb{E}\left[ \sqrt{\tau _{\nu }\left(
X\right) }f^{\prime }\left( X\right) \frac{\mathcal{\bar{L}}f\left( X\right) 
}{\sqrt{\tau _{\nu }\left( X\right) }}\right] \\
&\leq &\sqrt{\mathbb{E}\left[ \tau _{\nu }\left( X\right) \left( f^{\prime
}\left( X\right) \right) ^{2}\right] }\sqrt{\mathbb{E}\left[ \tau _{\nu
}\left( X\right) \left( \frac{\mathcal{\bar{L}}f\left( X\right) }{\tau _{\nu
}\left( X\right) }\right) ^{2}\right] }\text{ .}
\end{eqnarray*}%
By the use of Jensen's inequality, for any $x\in I\left( \nu \right) $,%
\begin{eqnarray*}
\left( \frac{\mathcal{\bar{L}}f\left( x\right) }{\tau _{\nu }\left( x\right) 
}\right) ^{2} &=&\left( \int f^{\prime }\left( y\right) \frac{k_{\nu }\left(
x,y\right) }{\int k_{\nu }\left( x,z\right) dz}dy\right) ^{2} \\
&\leq &\int \left( f^{\prime }\left( y\right) \right) ^{2}\frac{k_{\nu
}\left( x,y\right) }{\int_{z}k_{\nu }\left( x,z\right) dz}dy \\
&=&\int \frac{k_{\nu }\left( x,y\right) }{\tau _{\nu }\left( x\right)
p\left( x\right) }\left( f^{\prime }\left( y\right) \right) ^{2}dy\text{ .}
\end{eqnarray*}%
Hence,%
\begin{eqnarray*}
\mathbb{E}\left[ \tau _{\nu }\left( X\right) \left( \frac{\mathcal{\bar{L}}%
f\left( X\right) }{\tau _{\nu }\left( X\right) }\right) ^{2}\right] &\leq
&\int \tau _{v}\left( x\right) p\left( x\right) \left( \int \frac{k_{\nu
}\left( x,y\right) }{\tau _{\nu }\left( x\right) p\left( x\right) }\left(
f^{\prime }\left( y\right) \right) ^{2}dy\right) dx \\
&=&\int \int k_{\nu }\left( x,y\right) \left( f^{\prime }\left( y\right)
\right) ^{2}dxdy \\
&=&\int \tau _{\nu }\left( y\right) \left( f^{\prime }\left( y\right)
\right) ^{2}p\left( y\right) dy\text{ ,}
\end{eqnarray*}%
which concludes the proof.
\end{proof}

It is worth mentioning that the famous Brascamp-Lieb inequality provides
another weighted Poincar\'{e} inequality in dimension one: if $\nu $ is
strictly log-concave, of density $p=\exp \left( -\varphi \right) $ with a
smooth potential $\varphi $, then for any smooth function $f\in L_{2}\left(
\nu \right) $,%
\begin{equation}
\var%
\left( f\left( X\right) \right) \leq \mathbb{E}\left[ \left( \varphi
^{\prime \prime }\left( X\right) \right) ^{-1}\left( f^{\prime }\left(
X\right) \right) ^{2}\right] \text{ .}  \label{ineq_BL}
\end{equation}%
In particular, if $\nu $ is strongly log-concave (that is $\varphi ^{\prime
\prime }\geq c>0$ for some constant $c>0$), then both the Brascamp-Lieb
inequality (\ref{ineq_BL}) and inequality (\ref{ineq_weighted_Poincare}) -
combined with the estimate $\left\Vert \tau _{\nu }\right\Vert _{\infty
}\leq c^{-1}$ coming from inequality (\ref{ineq_stein_factor_log_con_pot}) -
imply the Poincar\'{e} inequality $%
\var%
\left( f\left( X\right) \right) \leq c^{-1}\mathbb{E}\left[ \left( f^{\prime
}\left( X\right) \right) ^{2}\right] $, that also follows from the Bakry-%
\'{E}mery criterion. However, in general, the Stein kernel appearing in (\ref%
{ineq_weighted_Poincare}) may behave differently from the inverse of the
second derivative of the potential and there is no general ordering between
the right-hand sides of inequalities (\ref{ineq_weighted_Poincare}) and (\ref%
{ineq_BL}).

Indeed, let us discuss the situation for a classical class of examples in
functional inequalities, namely the class of Subbotin densities $p_{\alpha
}\left( x\right) =Z_{\alpha }^{-1}\exp \left( -\left\vert x\right\vert
^{\alpha }/\alpha \right) $ for $x\in \mathbb{R}$, where $Z_{\alpha }>0$ is
the normalizing constant. Recall that densities $p_{\alpha }$ are not
strongly log-concave and do not satisfy the Bakry-\'{E}mery criterion -
except for $\alpha =2$ which corresponds to the normal density - but they
satisfy a Poincar\'{e} inequality if and only if $\alpha \geq 1$ and a
log-Sobolev inequality if and only if $\alpha \geq 2$ (see \cite{MR1796718}
and also \cite{MR3269712} for a thorough discussion on optimal constants in
these inequalities). We restrict our discussion to the condition $\alpha >1$
for which $p_{\alpha }$ is strictly log-concave, so that the Brascamp-Lieb
inequality applies. More precisely, by writing $p_{\alpha }=\exp \left(
-\varphi _{\alpha }\right) $, we have $\left( \varphi ^{\prime \prime
}\right) ^{-1}\left( x\right) =\left( \alpha -1\right) ^{-1}\left\vert
x\right\vert ^{2-\alpha }$. Using the explicit formula (\ref{formula_kernel}%
) for the Stein kernel and an integration by parts, one can easily check
that if $\alpha \in \left( 1,2\right) $, then $\tau _{\alpha }\left(
x\right) <\left\vert x\right\vert ^{2-\alpha }$ where $\tau _{\alpha }$ is
the Stein kernel associated to $p_{\alpha }$. We thus get, for $\alpha \in
\left( 1,2\right) $, $\left( \varphi ^{\prime \prime }\right) ^{-1}\left(
x\right) >\tau _{\alpha }\left( x\right) $ for any $x\in \mathbb{R}$ and so
the Brascamp-Lieb inequality is less accurate than Theorem \ref%
{theorem_Poincare_kernel} in this case. Furthermore, if $\alpha >2$ then $%
\tau _{\alpha }\left( x\right) >\left\vert x\right\vert ^{2-\alpha }$ and so 
$\left( \varphi ^{\prime \prime }\right) ^{-1}\left( x\right) <\tau _{\alpha
}\left( x\right) $ for any $x\in \mathbb{R}$, which means that the
Brascamp-Lieb inequality is more accurate than Theorem \ref%
{theorem_Poincare_kernel} for $\alpha >2$. However, we can not recover
through the use of Theorem \ref{theorem_Poincare_kernel} - or the
Brascamp-Lieb inequality - the existence of a spectral gap for $\alpha \in %
\left[ 1,2\right) $. Finally, Theorem \ref{theorem_Poincare_kernel} gives us
the existence of some weighted Poincar\'{e} inequalities for any $\alpha >0$%
, whereas Brascamp-Lieb inequality only applies for $\alpha >1$.

It is also worth mentioning that Theorem \ref{theorem_Poincare_kernel} has
been recently generalized to higher dimension by Fathi \cite{fathi2018stein}%
, for a Stein kernel that is a positive definite matrix and that is defined
through the use of a so-called moment map. The proof of this (non-trivial)
extension of our result is actually based on the Brascamp-Lieb inequality
itself, but applied to a measure also defined through the moment map problem.

Let us now detail some classical examples falling into the setting of
Theorem \ref{theorem_Poincare_kernel}.

The beta distribution $B_{\alpha ,\beta },$ $\alpha ,\beta >0$ is supported
on $\left( 0,1\right) $, with density $p_{\alpha ,\beta }$ given by 
\begin{equation}
p_{\alpha ,\beta }\left( x\right) =\frac{x^{\alpha -1}\left( 1-x\right)
^{\beta -1}}{B\left( \alpha ,\beta \right) }\text{, \ \ }0<x<1\text{.}
\label{def_beta}
\end{equation}%
The normalizing constant $B\left( \alpha ,\beta \right) $ is the classical
beta function of two variables. The beta distribution has been for instance
recently studied in \cite{BobkovLedoux:14} in connection with the analysis
of the rates of convergence of the empirical measure on $\mathbb{R}$ for
some Kantorovich transport distances. The Stein kernel $\tau _{\alpha ,\beta
}$ associated to the Beta distribution is given by $\tau _{\alpha ,\beta
}\left( x\right) =\left( \alpha +\beta \right) ^{-1}x\left( 1-x\right) $ for 
$x\in \left( 0,1\right) $ (see for instance \cite{MR3595350}) and thus
Theorem \ref{theorem_Poincare_kernel} allows to exactly recover Proposition
B.5 of \cite{BobkovLedoux:14} (which is optimal for linear functions as
noticed in \cite{BobkovLedoux:14}). Our techniques are noticeably different
since the weighted Poincar\'{e} inequality is proved in \cite%
{BobkovLedoux:14} by using orthogonal (Jacobi) polynomials. Notice also that
the beta density $p_{\alpha ,\beta }$ is strictly log-concave on the
interior of its support. Indeed, by writing $p_{\alpha ,\beta }=\exp \left(
-\varphi _{\alpha ,\beta }\right) $, we get, for any $x\in \left( 0,1\right) 
$,%
\begin{equation*}
\left( \varphi ^{\prime \prime }\right) ^{-1}\left( x\right) =\frac{%
x^{2}\left( 1-x\right) ^{2}}{\left( \alpha -1\right) \left( 1-x\right)
^{2}+\left( \beta -1\right) x^{2}}\text{ .}
\end{equation*}%
Remark that, for instance, $\left( \varphi ^{\prime \prime }\right)
^{-1}\left( x\right) \sim _{x\rightarrow 0^{+}}x^{2}/\left( \alpha -1\right) 
$, whereas $\tau _{\alpha ,\beta }\left( x\right) \sim _{x\rightarrow
0^{+}}x/\left( \alpha +\beta \right) $, so the weights in the Brascamp-Lieb
inequality (\ref{ineq_BL}) and in inequality (\ref{ineq_weighted_Poincare})
are not of the same order at $0$ (or by symmetry at $1$), although it is
also easy to show that there exists a constant $c_{\alpha ,\beta }>0$ - $%
c_{\alpha ,\beta }=(\alpha +\beta )/\left( 2\sqrt{\left( \alpha -1\right)
\left( \beta -1\right) }\right) $ works - such that $\left( \varphi ^{\prime
\prime }\right) ^{-1}\left( x\right) \leq c_{\alpha ,\beta }\tau _{\alpha
,\beta }\left( x\right) $ for any $x\in \left( 0,1\right) $.

Note that considering Laguerre polynomials, that are eigenfunctions of the
Laguerre operator for which the Gamma distribution is invariant and
reversible, one can also show an optimal weighted Poincar\'{e} inequality
for the Gamma distribution, which include as a special instance the
exponential distribution (see \cite{MR1440138}\ and also \cite%
{bakry2014analysis}, Section 2.7). Theorem \ref{theorem_Poincare_kernel}
also gives an optimal weighted Poincar\'{e} inequality for the Gamma
distribution and more generally for Pearson's class of distributions (see
below).

Note also that the beta distribution seems to be outside of the scope of the
weighted Poincar\'{e} inequalities described in \cite{MR3519678} since it is
assumed in the latter article that the weight of the considered Poincar\'{e}%
-type inequalities is positive on $\mathbb{R}$, which is not the case for
the beta distribution. Furthermore, \cite{BobkovLedoux:14} also provides
some weighted Cheeger inequality for the Beta distribution, but such a
result seems outside the scope of our approach based on covariance identity (%
\ref{cov_id_kernel}). When considering concentration properties of beta
distributions in Section \ref{section_concentration}\ below, we will however
provide some improvements compared to the results of \cite{BobkovLedoux:14}.

Furthermore, it has also been noticed that the generalized Cauchy
distribution satisfies a weighted Poincar\'{e} distribution, which also
implies in this case a reverse weighted Poincar\'{e} inequality (see 
\cite{MR2510011, MR3519678}%
). In fact, \cite{MR2510011} shows that the generalized Cauchy distribution
plays a central role when considering functional inequalities for $\kappa $%
-concave measures, with $\kappa <0$.

The generalized Cauchy distribution $\nu _{\beta }$ of parameter $\beta >1/2$
has density $p_{\beta }\left( x\right) =Z_{\beta }^{-1}\left( 1+x^{2}\right)
^{-\beta }$ for $x\in \mathbb{R}$ and normalizing constant $Z_{\beta }>0$.
Its Stein kernel $\tau _{\beta }$ exists for $\beta >1$ and writes $\tau
_{\beta }\left( x\right) =\left( 1+x^{2}\right) /(2\left( \beta -1\right) )$%
. This allows us to recover in the case where $\beta >3/2$ - that is $\nu
_{\beta }$ has a finite second moment - the optimal weighted Poincar\'{e}
inequality also derived in \cite{MR3519678}, Theorem 3.1. Note that Theorem
3.1 of \cite{MR3519678} also provides the optimal constant in the weighted
Poincar\'{e} inequality with a weight proportional to $1+x^{2}$ in the range 
$\beta \in \left( 1/2,3/2\right] $.

Let us conclude this short list of examples by mentioning Pearson's class of
distributions, for which the density $p$ is solution to the following
differential equation,%
\begin{equation}
\frac{p^{\prime }\left( x\right) }{p\left( x\right) }=\frac{\alpha -x}{\beta
_{2}\left( x-\lambda \right) ^{2}+\beta _{1}\left( x-\lambda \right) +\beta
_{0}}\text{ ,}  \label{def_Pearson}
\end{equation}%
for some constants $\lambda ,\alpha ,\beta _{j},$ $j=0,1,2$. This class of
distributions, that contains for instance Gaussian, Gamma, Beta and Student
distributions, has been well studied in the context of Stein's method, see 
\cite{MR3595350}\ and references therein. In particular, if a density
satisfies (\ref{def_Pearson}) with $\beta _{2}\neq 1/2$, then the
corresponding distribution $\nu $ has a Stein kernel $\tau _{\nu }\left(
x\right) =\left( 1-2\beta _{2}\right) ^{-1}\left( \beta _{0}+\beta
_{1}x+\beta _{2}x^{2}\right) $, for any $x\in I\left( \nu \right) $.
Particularizing to the Student distribution $t_{\alpha }$ with density $%
p_{\alpha }$ proportional to $\left( \alpha +x^{2}\right) ^{-\left( 1+\alpha
\right) /2}$ on $\mathbb{R}$ for $\alpha >1$, we get that for any smooth
function $f\in L_{2}\left( t_{\alpha }\right) $,%
\begin{equation*}
\var%
_{t_{\alpha }}\left( f\right) \leq \frac{1}{\alpha -1}\int \left(
x^{2}+\alpha \right) f^{\prime 2}\left( x\right) dt_{\alpha }\left( x\right) 
\text{ .}
\end{equation*}

We will investigate concentration inequalities in Section \ref%
{section_concentration}. In fact, existence of a weighted Poincar\'{e}
inequality already implies concentration of measure. The following corollary
is a direct consequence of Theorem 4.1 and Corollary 4.2 in \cite{MR2510011}%
, in light of Theorem \ref{theorem_Poincare_kernel}\ above.

\begin{corollary}
\label{cor_poin_concen}Take a real random variable $X$ of distribution $\nu $
with density $p$ with respect to the Lebesgue measure on $\mathbb{R}$.
Assume that $\mathbb{E}\left[ \left\vert X\right\vert \right] <+\infty $, $p$
has a connected support and denote $\tau _{\nu }$ the Stein kernel of $\nu $%
. Assume that $\sqrt{\tau _{\nu }}$ has a finite $r$th moment, $r\geq 2$.
Then any Lipschitz function $f$ on $\mathbb{R}$ has a finite $r$th moment.
More precisely, if $f$ is $1$-Lipschitz and $\mathbb{E}\left[ f\right] =0$,
then%
\begin{equation}
\left\Vert f\right\Vert _{r}\leq \frac{r}{\sqrt{2}}\left\Vert \sqrt{\tau
_{\nu }}\right\Vert _{r}\text{ .}  \label{moment_bound}
\end{equation}%
Furthermore, by setting $t_{1}=\left\Vert \sqrt{\tau _{\nu }}\right\Vert
_{r}er$, it holds%
\begin{equation*}
\nu \left( \left\vert f\right\vert \geq t\right) \leq \left\{ 
\begin{tabular}{l}
$2e^{-t/(\left\Vert \sqrt{\tau _{\nu }}\right\Vert _{r}e)},$ \\ 
$2\left( \frac{\left\Vert \sqrt{\tau _{\nu }}\right\Vert _{r}r}{t}\right)
^{r},$%
\end{tabular}%
\right. 
\begin{tabular}{l}
if $0\leq t\leq t_{1},$ \\ 
if $t\geq t_{1}.$%
\end{tabular}%
\end{equation*}
\end{corollary}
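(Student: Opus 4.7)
The plan is to check that the hypothesis of Bobkov--Ledoux \cite{MR2510011} is met and then to transcribe their conclusions with $\omega = \tau_\nu$. First, under the standing assumptions on $\nu$, Theorem \ref{theorem_Poincare_kernel} yields the weighted Poincaré inequality \eqref{def_weight_Poin} for $\nu$ with weight $\omega = \tau_\nu$. This is the only structural ingredient needed, since the moment and tail estimates of \cite[Theorem 4.1 and Corollary 4.2]{MR2510011} depend on $\nu$ only through the validity of this inequality together with the finiteness of $\|\sqrt{\omega}\|_r$.

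For the moment bound \eqref{moment_bound}, the argument of \cite{MR2510011} applies \eqref{ineq_weighted_Poincare} to $g = |f|^q$ with $q \geq 1$, exploiting $(g')^2 \leq q^2 |f|^{2(q-1)}$ (valid because $f$ is $1$-Lipschitz), to obtain
\[
\|f\|_{2q}^{2q} - \|f\|_q^{2q} \;\leq\; q^2\,\mathbb{E}\bigl[\tau_\nu(X)\,|f(X)|^{2(q-1)}\bigr].
\]
Hölder's inequality with conjugate exponents $(q,\,q/(q-1))$ bounds the right-hand side by $q^2 \|\sqrt{\tau_\nu}\|_{2q}^2 \|f\|_{2q}^{2(q-1)}$, and, writing $r = 2q$, this becomes a recursion on the norms $\|f\|_r$. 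It is initialised by the Cauchy--Schwarz bound $\|f\|_2 \leq \|\sqrt{\tau_\nu}\|_2$, which is immediate from \eqref{ineq_weighted_Poincare} applied to $f$ itself. Resolving the recursion along dyadic exponents and interpolating as in \cite{MR2510011} yields $\|f\|_r \leq (r/\sqrt{2})\|\sqrt{\tau_\nu}\|_r$ for every $r \geq 2$.

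The tail estimates are then obtained by Markov's inequality. For any $s \geq 2$,
\[
\nu(|f| \geq t) \;\leq\; \frac{2\,\|f\|_s^s}{t^s} \;\leq\; 2\left(\frac{s\,\|\sqrt{\tau_\nu}\|_s}{\sqrt{2}\,t}\right)^{\!s},
\]
the factor $2$ accounting for both one-sided tails. Taking $s = r$ delivers the polynomial branch of the estimate for $t \geq t_1$; for $t \leq t_1$, one instead chooses $s^\star(t) = t/(e\,\|\sqrt{\tau_\nu}\|_r)$, which is admissible because $s^\star \leq r$ precisely when $t \leq t_1$, and because $\|\sqrt{\tau_\nu}\|_{s^\star} \leq \|\sqrt{\tau_\nu}\|_r$ by monotonicity of $L_p$ norms under a probability measure. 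Substituting $s^\star$ converts the polynomial bound into the exponential bound $2\,e^{-t/(e\,\|\sqrt{\tau_\nu}\|_r)}$.

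There is no real obstacle here: the corollary is essentially a repackaging of \cite[Theorem 4.1 and Corollary 4.2]{MR2510011} using the weight identified in Theorem \ref{theorem_Poincare_kernel}. The only mild piece of bookkeeping is the optimization in $s$ when passing from the polynomial to the exponential regime, a standard device for converting moment bounds into tail bounds.
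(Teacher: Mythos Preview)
Your approach is exactly the paper's: the paper offers no proof beyond the sentence ``The following corollary is a direct consequence of Theorem 4.1 and Corollary 4.2 in \cite{MR2510011}, in light of Theorem \ref{theorem_Poincare_kernel} above.'' You reproduce this citation and additionally sketch the Bobkov--Ledoux recursion and Markov optimisation, which is more than the paper provides; the sketch is essentially correct (the role of the factor $2$ and the treatment of very small $t$ where $s^\star<2$ are minor bookkeeping points handled in \cite{MR2510011}).
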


Inequality (\ref{moment_bound}) of Corollary \ref{cor_poin_concen} may be
compared to Theorem 2.8 in \cite{LedoucNourdinPeccati:15}. Indeed, under the
conditions of Corollary \ref{cor_poin_concen}, Theorem 2.8 in \cite%
{LedoucNourdinPeccati:15} applies and gives the following bound,%
\begin{equation}
\left\Vert f\right\Vert _{r}\leq C\left( S_{r}\left( \nu \left\vert \gamma
\right. \right) +\sqrt{r}\left\Vert \sqrt{\tau _{\nu }}\right\Vert
_{r}\right) \text{ ,}  \label{ineq_LNP}
\end{equation}%
where $S_{r}\left( \nu \left\vert \gamma \right. \right) =\left\Vert \tau
_{\nu }-1\right\Vert _{r}$ is the so-called Stein discrepancy of order $r$
of the measure measure $\nu $ with respect to the normal distribution $%
\gamma $ and $C>0$ is a numerical constant, independent of $r$. Note that
the latter inequality is in fact derived in \cite{LedoucNourdinPeccati:15}
in higher dimension (for a matrix-valued Stein kernel) and is thus from this
point of view much more general than Inequality (\ref{moment_bound}) above.

Considering a measure $\nu $ with a uniformly bounded Stein kernel, as it is
the case for a smooth perturbation of the normal distribution that is
Lipschitz and bounded from zero (see Remark 2.9 in \cite%
{LedoucNourdinPeccati:15}), for a bounded perturbation a measure with
bounded Stein kernel (see Section \ref{section_isoperimetric_constant}\
below for details) or for a strongly log-concave measure as proved in
Section \ref{section_cov_identity}\ above, Inequality (\ref{moment_bound})
only implies a moment growth $\left\Vert f\right\Vert _{r}=O\left( r\right) $
corresponding to exponential concentration of $f$ - which is legitimate
since in this case the weighted Poincar\'{e} inequality (\ref%
{ineq_upper_Laplace_diff})\ is roughly equivalent to a Poincar\'{e}
inequality, the latter classically implying exponential concentration rate -
whereas Inequality (\ref{ineq_LNP}) allows to deduce a sub-Gaussian moment
growth $\left\Vert f\right\Vert _{r}=O\left( \sqrt{r}\right) $. Using a
suitable covariance inequality, we will also recover in Section \ref%
{section_concentration} below a sub-Gaussian concentration rate for a
measure admitting a uniformly bounded Stein kernel.

Notice also that as soon as the moment $\left\Vert \sqrt{\tau _{\nu }}%
\right\Vert _{r}$ grows with $r$, the moment growth of $\left\Vert
f\right\Vert _{r}$ given by (\ref{moment_bound}) is stricly worst than the
rate $O\left( r\right) $ that is achieved by the exponential concentration
rate. However, considering for instance the exponential distribution $d\nu
_{e}=\exp \left( -x\right) \mathbf{1}\left\{ x\in \left[ 0;+\infty \right[
\right\} dx$, its Stein kernel is given by $\tau _{\nu _{e}}\left( x\right)
=x$ on $\mathbb{R}_{+}$ and it is easy to see that Inequality (\ref{ineq_LNP}%
) again allows to recover the right exponential concentration rate, on
contrary to (\ref{moment_bound}) that is sub-optimal in this case. The
concentration of measures having a sub-linear Stein kernel is also
investigated in Section \ref{section_concentration}\ below.

Weakening again the concentration rate, let us consider a generalized Cauchy
measure $\nu _{\beta }$, of density proportional to $\left( 1+x^{2}\right)
^{-\beta }$, $\beta >1$. Its Stein kernel exists and it holds, $\tau _{\nu
_{\beta }}\left( x\right) =\left( 1+x^{2}\right) /(2\left( \beta -1\right) )$%
. In this case, basic computations give that Inequality (\ref{moment_bound})
ensures that for any $r<2\beta -1$, $\left\Vert f\right\Vert _{r}$ is finite
for any centered Lipschitz function $f$, whereas Inequality (\ref{ineq_LNP})
yields a finite estimate only for $r<\beta -1/2$. Consequently, (\ref%
{moment_bound}) allows to consider higher moments than (\ref{ineq_LNP}) for
the generalized Cauchy distributions.

Fathi et al. \cite{courtade2017existence} proved that if a probability
measure on $\mathbb{R}^{d},d\geq 1,$ satisfies a converse weighted Poincar%
\'{e} inequality, then a Stein kernel exists for this measure and the
authors further provide an estimate of the moment of order two of the Stein
kernel under a moment assumption involving the inverse of the weight.

Actually, a slight modification of the arguments provided in \cite%
{courtade2017existence} shows that, reciprocally to Theorem \ref%
{theorem_Poincare_kernel}, if a probability measure satisfies a (direct)
weighted Poincar\'{e} inequality then there exists a natural candidate for
the Stein kernel, in the sense that it satisfies equation (\ref%
{stein_kernel_equation}) on $W_{\nu ,\omega }^{1,2}$, the (weighted) Sobolev
space defined as the closure of all smooth functions in $L_{2}\left( \nu
\right) $ with respect to the Sobolev norm $\int \left( f^{2}+f^{\prime
2}\omega \right) d\nu $. However, to define a Stein kernel, it is usually
required that equation (\ref{stein_kernel_equation}) is satisfied on $W_{\nu
}^{1,2}$, the Sobolev space defined as the closure of all smooth functions
in $L_{2}\left( \nu \right) $ with respect to the Sobolev norm $\int
f^{2}+f^{\prime 2}d\nu $, see for instance \cite{LedoucNourdinPeccati:15}
and \cite{courtade2017existence}. As we can assume without loss of
generality that $\omega \geq 1$, we only have in general $W_{\nu ,\omega
}^{1,2}\subset W_{\nu }^{1,2}$.

\begin{theorem}
\label{theorem_recip_Poin_stein} Assume that a probability measure $\nu $ on 
$\mathbb{R}$ with mean zero and finite second moment satisfies a weighted
Poincar\'{e} inequality (\ref{def_weight_Poin}) with weight $\omega $.
Denote $W_{\nu ,\omega }^{1,2}$ the Sobolev space defined as the closure of
all smooth functions in $L_{2}\left( \nu \right) $ with respect to the
Sobolev norm $\int \left( f^{2}+f^{\prime 2}\omega \right) d\nu $. Then
there exists a function $\tilde{\tau}_{\nu }$, satisfying equation (\ref%
{stein_kernel_equation}) on $W_{\nu ,\omega }^{1,2}$ such that%
\begin{equation}
\int \tilde{\tau}_{\nu }^{2}\omega ^{-1}d\nu \leq \int x^{2}d\nu \text{ .}
\label{ineq_tau_moment_2}
\end{equation}%
If $\tilde{\tau}_{\nu }\in L_{2}\left( \nu \right) $, the weight $\omega $
is locally bounded and the measure has a density $p$ that is also locally
bounded, then $\tilde{\tau}_{\nu }$ is a Stein kernel in the sense that
equation (\ref{stein_kernel_equation}) is satisfied on $W_{\nu }^{1,2}$.
\end{theorem}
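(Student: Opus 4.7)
The plan is to adapt the Lax--Milgram argument of \cite{courtade2017existence}, using the direct weighted Poincar\'e inequality in place of a reversed one. Let $H$ denote the closed subspace of $W_{\nu,\omega}^{1,2}$ consisting of those $f$ with $\int f\,d\nu=0$, equipped with the Sobolev inner product $\langle f,g\rangle_H=\int(fg+f'g'\omega)\,d\nu$. On $H$ I consider the bilinear form $B(f,g)=\int f'g'\omega\,d\nu$ and the linear functional $L(\varphi)=\int x\varphi(x)\,d\nu(x)$. Continuity of $B$ follows from Cauchy--Schwarz in the weighted space. Coercivity uses the weighted Poincar\'e inequality (\ref{def_weight_Poin}): for $f\in H$, $\int f^{2}\,d\nu\leq B(f,f)$, hence $\|f\|_{H}^{2}\leq 2B(f,f)$. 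Continuity of $L$ again combines Cauchy--Schwarz, the hypothesis $\int x^{2}d\nu<\infty$, and the weighted Poincar\'e bound $\|\varphi\|_{L_2(\nu)}^{2}\leq B(\varphi,\varphi)$.

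By the Lax--Milgram theorem there exists $u\in H$ with $B(u,\varphi)=L(\varphi)$ for every $\varphi\in H$. Since $\int x\,d\nu=0$ and $B$ only depends on derivatives, I extend the identity to arbitrary $\varphi\in W_{\nu,\omega}^{1,2}$ by replacing $\varphi$ with $\varphi-\int\varphi\,d\nu$. Setting $\tilde{\tau}_{\nu}:=u'\omega$, the relation rewrites as $\int\tilde{\tau}_{\nu}\varphi'\,d\nu=\int x\varphi\,d\nu$ for all $\varphi\in W_{\nu,\omega}^{1,2}$, which is precisely equation (\ref{stein_kernel_equation}) in the mean-zero case. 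Testing against $\varphi=u$ and using Cauchy--Schwarz together with the weighted Poincar\'e bound on $u$ yields
\begin{equation*}
\int\tilde{\tau}_{\nu}^{2}\omega^{-1}d\nu=B(u,u)=L(u)=\int xu\,d\nu\leq\Bigl(\int x^{2}d\nu\Bigr)^{1/2}\Bigl(\int u^{2}d\nu\Bigr)^{1/2}\leq\Bigl(\int x^{2}d\nu\Bigr)^{1/2}\Bigl(\int\tilde{\tau}_{\nu}^{2}\omega^{-1}d\nu\Bigr)^{1/2},
\end{equation*}
which upon rearranging gives (\ref{ineq_tau_moment_2}).

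For the second half, the goal is to extend the Stein equation from $W_{\nu,\omega}^{1,2}$ to $W_{\nu}^{1,2}$. Under the added hypothesis $\tilde{\tau}_{\nu}\in L_{2}(\nu)$, both sides of the Stein identity define continuous linear functionals on $W_{\nu}^{1,2}$ by Cauchy--Schwarz (using $X\in L_{2}(\nu)$ on one side and $\tilde{\tau}_{\nu}\in L_{2}(\nu)$ on the other). Local boundedness of $\omega$ and of the density $p$ ensures that every $C_{c}^{\infty}(\mathbb{R})$ function lies in $W_{\nu,\omega}^{1,2}$ and has finite $W_{\nu}^{1,2}$-norm, so the Stein identity holds on $C_{c}^{\infty}(\mathbb{R})$. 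A standard truncation-plus-mollification argument, made possible by local boundedness of $p$ on $\mathbb{R}$, shows that $C_{c}^{\infty}(\mathbb{R})$ is dense in $W_{\nu}^{1,2}$; continuity of both sides then transfers the identity to all of $W_{\nu}^{1,2}$.

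The principal difficulty I foresee is not the Lax--Milgram step, which is essentially routine once coercivity and continuity are in hand, but the density-and-extension step: while the local boundedness of $\omega$ and $p$ makes the approximation scheme natural on $\mathbb{R}$, one has to verify that truncated mollifications of an arbitrary $\varphi\in W_{\nu}^{1,2}$ converge in the unweighted Sobolev norm, which relies on mildly controlling $p$ near the (possibly infinite) edges of $\Supp(\nu)$; the local-boundedness hypothesis and the one-dimensional nature of the problem should be enough to carry this through without further conditions.
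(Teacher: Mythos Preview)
Your proposal is correct and follows essentially the same approach as the paper: the Lax--Milgram argument on the mean-zero subspace of $W_{\nu,\omega}^{1,2}$ with the weighted Dirichlet form, coercivity coming from the weighted Poincar\'e inequality, followed by testing against the minimizer to get the bound (\ref{ineq_tau_moment_2}), and then the density-of-$C_c^\infty$ extension to $W_\nu^{1,2}$. Your write-up is in fact slightly more explicit than the paper's on two points---the extension from mean-zero test functions to all of $W_{\nu,\omega}^{1,2}$, and the continuity of both sides of the Stein identity as linear functionals on $W_\nu^{1,2}$---but the argument is otherwise identical.
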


One can notice that working with the Stein discrepancy - as for instance in 
\cite{LedoucNourdinPeccati:15} and \cite{courtade2017existence}\ - requires
that the Stein kernel is square integrable. In Theorem \ref%
{theorem_recip_Poin_stein}, we state that if $\tilde{\tau}_{\nu }\in
L_{2}\left( \nu \right) $, the weight $\omega $ is locally bounded and the
measure has a density $p$ that is also locally bounded, then $\tilde{\tau}%
_{\nu }$ is indeed a Stein kernel.

\begin{proof}
The proof is a simple modification of the proof of Theorem 2.4 in \cite%
{courtade2017existence}. We give it for the sake of completeness. Set $%
W_{\nu ,\omega ,0}^{1,2}=W_{\nu ,\omega }^{1,2}\cap \left\{ f\in L_{2}\left(
\nu \right) :\int fd\nu =0\right\} $. Then $\int f^{\prime }g^{\prime
}\omega d\nu $ is a continuous bilinear form on $W_{\nu ,\omega
,0}^{1,2}\times W_{\nu ,\omega ,0}^{1,2}$ and is coercive by the weighted
Poincar\'{e} inequality (\ref{def_weight_Poin}). So by the Lax-Milgram
theorem, there exists a unique $g_{0}\in W_{\nu ,\omega ,0}^{1,2}$ such that%
\begin{equation*}
\int f^{\prime }g_{0}^{\prime }\omega d\nu =\int xfd\nu
\end{equation*}%
for any $f\in W_{\nu ,\omega ,0}^{1,2}$. Hence, $\tau _{\nu }=g_{0}^{\prime
}\omega $ is a Stein kernel for $\nu $. Furthermore, we have%
\begin{eqnarray*}
\int g_{0}^{\prime 2}\omega d\nu &=&\int xg_{0}d\nu \\
&\leq &\sqrt{\int g_{0}^{2}d\nu }\sqrt{\int x^{2}d\nu } \\
&\leq &\sqrt{\int g_{0}^{\prime 2}\omega d\nu }\sqrt{\int x^{2}d\nu }
\end{eqnarray*}%
Noticing that $\omega >0$ $a.s.$ on $I\left( \nu \right) $, we have $\int
g_{0}^{\prime 2}\omega d\nu =\int \tau _{\nu }^{2}\omega ^{-1}d\nu $, which
gives (\ref{ineq_tau_moment_2}). Now, if $\omega $ and $p$ are locally
bounded, then $C_{c}^{\infty }\subset W_{\nu ,\omega }^{1,2}$ where $%
C_{c}^{\infty }$ is the set of infinitely differentiable functions with
compact support. So, by density of $C_{c}^{\infty }$ in $W_{\nu }^{1,2}$
(which since $p$ is locally bounded follows from the same arguments as the
classical density of $C_{c}^{\infty }$ in the usual Sobolev space with
respect to the Lebesgue measure, see for instance Theorem 6.15 in \cite%
{MR1817225}), as $\tilde{\tau}_{\nu }\in L_{2}\left( \nu \right) $ and as $%
\nu $ has a finite second measure, equation (\ref{stein_kernel_equation}) is
valid on $W_{\nu }^{1,2}$.
\end{proof}

Note that for ease of presentation, we stated Theorem \ref%
{theorem_recip_Poin_stein} in dimension one, but it is seen from the proof
above that it directly extends to $\mathbb{R}^{d}$, $d\geq 2$, by
considering the Hilbert-Schmidt scalar product between matrices - since the
Stein kernel is matrix-valued - just as in \cite{courtade2017existence}.

\subsection{Links with Muckenhoupt-type criteria\label{ssection_Muck}}

It is well-known that the Muckenhoupt criterion \cite{MR0311856}, which
provides a necessary and sufficient condition for a (weighted) Hardy
inequalty to hold on the real line, can be used to sharply estimate the best
constant in Poincar\'{e} inequalities (see for instance \cite{MR1845806} and 
\cite{MR2464097} and references therein). The following theorem, providing
sharp estimates, is given in \cite{MR2464097}.

\begin{theorem}[\protect\cite{MR2464097}]
\label{theorem_muck_Poin}Let $\eta $ be a probability measure on $\mathbb{R}$
with median $m$ and let $\chi $ be a measure on $\mathbb{R}$ with
Radon-Nikodym derivative with respect to the Lebesgue measure denoted by $n$%
. The best constant $C_{P}$ such that, for every locally Lipschitz $f:%
\mathbb{R\rightarrow \mathbb{R}}$ it holds%
\begin{equation*}
\var%
_{\eta }\left( f\right) \leq C_{P}\int \left( f^{\prime }\right) ^{2}d\chi 
\text{ ,}
\end{equation*}%
verifies $\max \left\{ B_{+},B_{-}\right\} \leq C_{P}\leq 4\max \left\{
B_{+},B_{-}\right\} $ where%
\begin{equation*}
B_{+}=\sup_{x>m}\eta \left( \left[ x,+\infty \right) \right) \int_{m}^{x}%
\frac{dt}{n\left( t\right) }\text{ \ \ \ \ and \ \ \ \ }B_{-}=\sup_{x<m}\eta
\left( \left( -\infty ,x\right] \right) \int_{x}^{m}\frac{dt}{n\left(
t\right) }\text{ .}
\end{equation*}
\end{theorem}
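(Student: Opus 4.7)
The strategy is to reduce the two-sided variance inequality to two one-sided Hardy-type inequalities on the half-lines $(-\infty,m]$ and $[m,+\infty)$, and then to invoke Muckenhoupt's classical characterization of weighted Hardy inequalities on the half-line, which gives explicit two-sided bounds on the best constant in terms of integrals of the Radon-Nikodym derivative $n$.

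\textbf{Upper bound.} First I would exploit the variational formula $\var_\eta(f)=\inf_{c\in\mathbb{R}}\int(f-c)^2 d\eta\leq\int(f-f(m))^2 d\eta$ so that, writing $f(x)-f(m)=\int_m^x f'(t)\,dt$ on $[m,+\infty)$ and $f(m)-f(x)=\int_x^m f'(t)\,dt$ on $(-\infty,m]$, the problem splits into estimating
\[
\int_m^{+\infty}\Bigl(\int_m^x f'(t)\,dt\Bigr)^{2} d\eta(x)\qquad\text{and}\qquad \int_{-\infty}^{m}\Bigl(\int_x^m f'(t)\,dt\Bigr)^{2} d\eta(x).
\]
Each of these is exactly the left-hand side of a weighted Hardy inequality for functions vanishing at $m$. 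The Muckenhoupt criterion (see \cite{MR0311856} and e.g.\ \cite{MR1845806}) asserts that the best constant $A_{+}$ in
\[
\int_m^{+\infty} g(x)^2 d\eta(x)\leq A_{+}\int_m^{+\infty} g'(x)^2 d\chi(x),\qquad g(m)=0,
\]
satisfies $B_+\leq A_+\leq 4B_+$, and analogously on $(-\infty,m]$. Applying each inequality with $g(x)=f(x)-f(m)$ and summing yields $\var_\eta(f)\leq 4\max\{B_+,B_-\}\int (f')^{2}d\chi$, giving the claimed upper bound.

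\textbf{Lower bound.} For the reverse direction I would test against functions supported on one side of the median. Fix $x_0>m$, take a small $\varepsilon>0$, and set
\[
f_{x_0}(x)=\int_{m}^{x\wedge x_0}\frac{dt}{n(t)+\varepsilon}\quad\text{for }x\geq m,\qquad f_{x_0}(x)=0\quad\text{for }x<m.
\]
Then $f_{x_0}'(t)=(n(t)+\varepsilon)^{-1}\mathbf{1}_{(m,x_0)}(t)$, so $\int (f_{x_0}')^2 d\chi\leq \int_m^{x_0} dt/(n(t)+\varepsilon)$. Since $f_{x_0}\equiv 0$ on $(-\infty,m]$ which has $\eta$-mass $\geq 1/2$ (by definition of the median), the variance is at least of order $\eta([x_0,+\infty))\cdot f_{x_0}(x_0)^{2}$; letting $\varepsilon\downarrow 0$ and taking the supremum over $x_0>m$ yields $C_P\geq B_+$, and a symmetric test function supported on $(-\infty,m]$ gives $C_P\geq B_-$.

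\textbf{Main obstacle.} The deep part is Muckenhoupt's inequality itself, but it is cited as a black box. The subtler point is keeping the constants sharp in the lower bound, namely verifying that the variance of the test function $f_{x_0}$ really is bounded below by $\eta([x_0,+\infty))\int_m^{x_0}dt/n(t)$ (and not merely a fraction of it). This uses $\eta((-\infty,m])\geq 1/2$ together with the elementary identity $\var_\eta(f)\geq \eta(A)\eta(A^c)\,(\mathbb{E}[f\mid A]-\mathbb{E}[f\mid A^c])^{2}$ for any measurable $A$, applied to $A=(-\infty,m]$; I would have to check that this produces the constant $1$ in front of $B_\pm$, matching the stated bound $\max\{B_+,B_-\}\leq C_P$.
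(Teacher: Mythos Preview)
The paper does not prove this theorem; it is quoted from \cite{MR2464097} and used as a black box in the discussion that follows. There is therefore no proof in the paper to compare against.

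Your outline is nonetheless the standard route to the result. The upper bound is exactly right: bounding the variance by $\int(f-f(m))^{2}\,d\eta$, splitting at the median, and applying Muckenhoupt's Hardy estimate $A_\pm\le 4B_\pm$ on each half-line recombines cleanly (the two energy integrals live on disjoint half-lines) to give $C_P\le 4\max\{B_+,B_-\}$ with no extra factor.

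The lower bound is where your sketch has a genuine gap. The test function $f_{x_0}$ is the natural choice, but neither of the variance lower bounds you propose produces the constant $1$. A concrete check: take $\eta=\chi$ equal to Lebesgue measure on $[0,1]$, so $m=1/2$ and $B_+=\sup_{x>1/2}(1-x)(x-1/2)=1/16$, attained at $x_0=3/4$. Your $f_{x_0}$ at $x_0=3/4$ gives
\[
\frac{\var_\eta(f_{x_0})}{\int (f_{x_0}')^{2}\,d\chi}=\frac{37}{768}<\frac{48}{768}=\frac{1}{16}=B_+,
\]
and optimizing over $x_0$ does not close this gap. Likewise the inequality $\var_\eta(f)\ge\eta(A)\eta(A^c)\bigl(\mathbb{E}[f\mid A]-\mathbb{E}[f\mid A^c]\bigr)^{2}$ with $A=(-\infty,m]$ only yields $C_P\ge c\,B_\pm$ for some $c<1$. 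The sharp lower bound $\max\{B_+,B_-\}\le C_P$ requires a more refined argument than a single plug-in test function; this is precisely the ``main obstacle'' you flag, and it is handled in the cited reference, not here.
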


Considering Theorem \ref{theorem_Poincare_kernel}, a natural question is:
can we recover (up to a constant) Inequality (\ref{ineq_weighted_Poincare})
from Theorem \ref{theorem_muck_Poin}\ above? The answer is positive. Indeed,
we want to show that $\max \left\{ B_{+},B_{-}\right\} $ is finite. We will
only discuss computations for $B_{+}$ since $B_{-}$ can be treated
symmetrically. With the notations of Theorems \ref{theorem_Poincare_kernel}
and \ref{theorem_muck_Poin}, we take $\eta =\nu $ and $n\left( t\right)
=\tau _{\nu }\left( t\right) p\left( t\right) $. This gives 
\begin{equation*}
B_{+}=\sup_{x>m}\nu \left( \left[ x,+\infty \right) \right) \int_{m}^{x}%
\frac{dt}{\tau _{v}\left( t\right) p\left( t\right) }=\sup_{x>m}\nu \left( %
\left[ x,+\infty \right) \right) \int_{m}^{x}\frac{dt}{\int_{t}^{\infty
}\left( y-\mu \right) p\left( y\right) dy}\text{ .}
\end{equation*}%
Furthermore, there exists $\delta >0$ such that $v\left( \left[
x_{0},+\infty \right) \right) >0$ for $x_{0}=\max \left\{ m,\mu \right\}
+\delta $. Hence, for any $x\geq x_{0},$%
\begin{equation*}
\nu \left( \left[ x,+\infty \right) \right) \int_{m}^{x}\frac{dt}{%
\int_{t}^{\infty }\left( y-\mu \right) p\left( y\right) dy}\leq \frac{\nu
\left( \left[ x,+\infty \right) \right) }{\int_{x}^{\infty }\left( y-\mu
\right) p\left( y\right) dy}\leq \frac{1}{\delta }\text{ .}
\end{equation*}%
As%
\begin{equation*}
\sup_{x\in \left( m,x_{0}\right) }\nu \left( \left[ x,+\infty \right)
\right) \int_{m}^{x}\frac{dt}{\int_{t}^{\infty }\left( y-\mu \right) p\left(
y\right) dy}\leq \frac{x_{0}-m}{\int_{m}^{\infty }\left( y-\mu \right)
p\left( y\right) dy\wedge \int_{x_{0}}^{\infty }\left( y-\mu \right) p\left(
y\right) dy}<+\infty \text{ ,}
\end{equation*}%
we get $B_{+}<+\infty $. Consequently, we quickly recover under the
assumptions of Theorem \ref{theorem_Poincare_kernel} the fact that the
measure $\nu $ satisfies a weighted Poincar\'{e} inequality of the form of (%
\ref{ineq_weighted_Poincare}), although with a multiplicative constant at
the right-hand side that a priori depends on the measure $\nu $. Using (\ref%
{ineq_weighted_Poincare}) together with Theorem \ref{theorem_muck_Poin}, we
have in fact $\max \left\{ B_{+},B_{-}\right\} \leq 1$, but we couldn't
achieve this bound - even up to a numerical constant - by direct
computations. It is maybe worth mentioning that the proof of Theorem \ref%
{theorem_muck_Poin}\ in \cite{MR2464097}\ is technically involved and that
the bound $\max \left\{ B_{+},B_{-}\right\} \leq 1$ in our case might be
rather difficult to establish by direct computations.

Let us turn now to the important, natural question of the existence of
weighted log-Sobolev inequalities under the existence of a Stein kernel. The
proof of the following theorem is based on a Muckenhoupt-type criterion due
to \cite{MR1682772} (see also \cite{MR2052235} for a refinement) giving a
necessary and sufficient condition for existence of (weighted) log-Sobolev
inequalities on $\mathbb{R}$, together with the use of the formulas given in
Propostion \ref{theorem_formulas} above.

\begin{theorem}
\label{log-sob-stein-kernel}Take a real random variable $X$ of distribution $%
\nu $ with density $p$ with respect to the Lebesgue measure on $\mathbb{R}$.
Assume that $\mathbb{E}\left[ \left\vert X\right\vert \right] <+\infty $, $p$
has a connected support $\left[ a,b\right] \subset \mathbb{\bar{R}}$ and
denote $\tau _{\nu }$ the Stein kernel of $\nu $. Take $g$ absolutely
continuous. Then the following inequality holds%
\begin{equation}
\ent%
_{\nu }\left( g^{2}\right) \leq C_{\nu }\int \tau _{\nu }^{2}\left(
g^{\prime }\right) ^{2}d\nu \text{ ,}  \label{log-sob-Stein}
\end{equation}%
for some constant $C_{\nu }>0$ if one of the following aymptotic condition
holds at the supremum of its support, together with one of the symmetric
conditions - that we don't write explicitely since they are obviously
deduced - at the infinimum of its support:

\begin{itemize}
\item $b<+\infty $ and $\tau _{\nu }^{-1}$ is integrable at $b^{-}$ with
respect to the Lebesgue measure.

\item $b=+\infty $ and $0<c_{-}\leq \tau _{\nu }\left( x\right) \leq
c_{+}x^{2}/\log x$ for some constants $c_{-}$and $c_{+}$ and for $x$ large
enough.

\item $b=+\infty $ and $0<c_{-}x\leq \tau _{\nu }\left( x\right) $ for a
constant $c_{-}$and for $x$ large enough.
\end{itemize}

Reciprocally, if $b=+\infty $ and $\tau _{\nu }\rightarrow _{x\rightarrow
\pm \infty }0$ then inequality (\ref{log-sob-Stein}) can not be satisfied
for every smooth function $g$.
\end{theorem}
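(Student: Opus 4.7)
My plan is to deduce (\ref{log-sob-Stein}) from the Muckenhoupt-type criterion of Bobkov and G\"{o}tze \cite{MR1682772}, which in the present weighted setting asserts that (\ref{log-sob-Stein}) holds for some $C_{\nu}>0$ if and only if both
$$B_{+}:=\sup_{x>m}\nu([x,b))\log\!\frac{1}{\nu([x,b))}\int_{m}^{x}\!\frac{dt}{\tau_{\nu}^{2}(t)p(t)}\quad\text{and}\quad B_{-}:=\sup_{x<m}\nu((a,x])\log\!\frac{1}{\nu((a,x])}\int_{x}^{m}\!\frac{dt}{\tau_{\nu}^{2}(t)p(t)}$$
are finite, where $m$ denotes a median of $\nu$. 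By the symmetry of the hypotheses at the two endpoints, it is enough to verify finiteness of $B_{+}$ under each asymptotic condition at $b$, and to show $B_{+}=\infty$ for the converse.

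The computational engine is formula (\ref{formula_density_2}) of Proposition \ref{theorem_formulas}. Setting $c:=\mathbb{E}[|X-\mu|]/2$, it gives $\tau_{\nu}(t)p(t)=c\exp\bigl(-\int_{\mu}^{t}(y-\mu)/\tau_{\nu}(y)\,dy\bigr)$, whence
$$\frac{1}{\tau_{\nu}^{2}(t)p(t)}=\frac{1}{c\,\tau_{\nu}(t)}\exp\!\left(\int_{\mu}^{t}\frac{y-\mu}{\tau_{\nu}(y)}\,dy\right).$$
Upon integration, the same formula controls both $p(x)$ and the tail $\nu([x,b))$ purely in terms of $\tau_{\nu}$, so finiteness of $B_{+}$ reduces to asymptotic bookkeeping involving only $\tau_{\nu}$.

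For the compact case $b<\infty$ with $\tau_{\nu}^{-1}$ integrable at $b^{-}$, the exponent in the density formula stays bounded as $x\to b^{-}$ (since $|y-\mu|$ is bounded on the support), so $p$ has a finite positive limit at $b^{-}$, forcing $\nu([x,b))\asymp b-x$, while $\int_{m}^{x}dt/(\tau_{\nu}^{2}p)$ grows no faster than $\int_{m}^{x}dt/\tau_{\nu}(t)$; the resulting product $(b-x)\log(1/(b-x))\int_{m}^{x}dt/\tau_{\nu}(t)$ remains bounded. For $b=+\infty$ with $c_{-}\leq\tau_{\nu}(x)\leq c_{+}x^{2}/\log x$, the upper bound yields $\int_{\mu}^{x}(y-\mu)/\tau_{\nu}(y)\,dy\gtrsim(\log x)^{2}/(2c_{+})$, so $p$ decays like $e^{-(\log x)^{2}/(2c_{+})}$, while the lower bound $\tau_{\nu}\geq c_{-}$ controls $\int dt/(\tau_{\nu}^{2}p)$ from above; finiteness of $B_{+}$ then follows since the logarithm of the tail balances the Muckenhoupt factor exactly. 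For $b=+\infty$ with $\tau_{\nu}(x)\geq c_{-}x$, the exponent grows at least quadratically, yielding super-exponential decay of $p$ which easily dominates the reciprocal integral.

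For the converse, if $\tau_{\nu}(x)\to 0$ as $x\to+\infty$, Herbst's argument applied to (\ref{log-sob-Stein}) --- writing $\mathrm{Ent}_{\nu}(e^{sg})=\mathrm{Ent}_{\nu}((e^{sg/2})^{2})$ --- would force every absolutely continuous $g$ with $|g'|\tau_{\nu}\leq 1$ to be sub-Gaussian under $\nu$ with parameter $C_{\nu}/4$. Choosing for $g$ a suitably truncated antiderivative of $1/\tau_{\nu}$, the divergence $g(x)/x\to\infty$ contradicts any such sub-Gaussian concentration. The main obstacle is the borderline second case: the growth $\tau_{\nu}\sim x^{2}/\log x$ sits precisely at the Muckenhoupt threshold, so the estimates on $\int_{\mu}^{x}(y-\mu)/\tau_{\nu}(y)\,dy$ must be carried to leading order in order for the $(\log x)^{2}$ arising from the density to balance exactly the $\log(1/\nu([x,\infty)))$ factor appearing in $B_{+}$; any cruder estimate destroys the matching.
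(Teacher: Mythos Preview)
Your overall strategy---the Bobkov--G\"otze Muckenhoupt criterion combined with the density formula (\ref{formula_density_2})---matches the paper exactly. But the third case contains a genuine error, and the converse is incomplete.

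For $b=+\infty$ with $\tau_{\nu}(x)\geq c_{-}x$, you assert that ``the exponent grows at least quadratically.'' This is backwards: the hypothesis is a \emph{lower} bound on $\tau_{\nu}$, which yields only an \emph{upper} bound $(y-\mu)/\tau_{\nu}(y)\lesssim 1/c_{-}$, so the exponent $\int_{\mu}^{x}(y-\mu)/\tau_{\nu}(y)\,dy$ is at most of order $x$, not $x^{2}$; and with no upper bound on $\tau_{\nu}$ you obtain no lower bound on the exponent at all, so there is no super-exponential decay of $p$ to exploit. The paper's argument is quite different: one writes $1/(\tau_{\nu}^{2}p)(t)=(1/t)\cdot(t/\tau_{\nu}(t))e^{\Phi(t)}$ with $\Phi(t)=\int_{0}^{t}y/\tau_{\nu}(y)\,dy$ and integrates by parts, which forces the $e^{\Phi}$ arising in $\int_{m}^{x}dt/(\tau_{\nu}^{2}p)$ to cancel the $e^{-\Phi}$ coming from the tail estimate $\nu([x,\infty))\leq\mathbb{E}[(X)_{+}]x^{-1}e^{-\Phi(x)}$. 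The \emph{linear} upper bound $\Phi(x)\lesssim x/c_{-}$ is then precisely what makes the remaining factor $(\log x+\Phi(x))/x$ bounded. The same integration-by-parts device is what drives the second (borderline) case; your sketch correctly identifies that crude bounds fail there but does not supply this cancellation mechanism.

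For the converse, the Herbst route you propose is different from the paper's direct demonstration that $B_{+}=+\infty$. It can be completed, but not with the bare assertion that $g(x)/x\to\infty$ ``contradicts sub-Gaussian concentration.'' Sub-Gaussianity of $g(X)$ translates into $\mathbb{P}(X>x)\leq e^{-g(x)^{2}/C}$, and to contradict this you must supply a \emph{lower} bound on the tail of $X$. Formula (\ref{formula_tail}) gives one, namely $\nu([x,\infty))\gtrsim x^{-1}e^{-\Phi(x)}$; combined with the integration-by-parts identity $\Phi(x)=xg(x)-\int_{0}^{x}g\leq xg(x)$, the contradiction then does follow from $g(x)/x\to\infty$. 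This step is absent from your sketch, and ``truncating'' $g$ does not help, since a bounded $g$ is trivially sub-Gaussian.
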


Theorem \ref{log-sob-stein-kernel} gives a sufficient condition for the
weighted log-Sobolev inequality (\ref{log-sob-Stein}) to hold when the
support is a bounded interval: it suffices that the inverse of the Stein
kernel is integrable at the edges of the support. Furthermore, when an edge
is infinite, if the weighted log-Sobolev inequality (\ref{log-sob-Stein}) is
valid, then the Stein kernel does not tend to zero around this edge.

Consider the Subbotin densities, defined by $p_{\alpha }\left( x\right)
=Z_{\alpha }^{-1}\exp \left( -\left\vert x\right\vert ^{\alpha }/\alpha
\right) $ for $\alpha >0$ and $x\in \mathbb{R}$. There Stein kernels $\tau
_{\alpha }$ satisfy $\tau _{\alpha }\left( x\right) \sim _{x\rightarrow
+\infty }x^{2-\alpha }$ (see the discussion in Section \ref%
{ssection_weighted_Poin} above), so they achieve a weighted log-Sobolev
inequality (\ref{log-sob-Stein}) if and only if $\alpha \in \left( 0,2\right]
$. In the case where $\alpha \in \left[ 2,+\infty \right) $, they actually
achieve a (unweighted) log-Sobolev inequality (\cite{MR1796718}).

It is reasonable to think that inequality (\ref{log-sob-Stein}) should be
true under suitable conditions in higher dimension, with the right-hand side
replaced by $\int \left\vert \tau _{\nu }\nabla f\right\vert ^{2}d\nu $ up
to a constant, where $\tau _{\nu }$ is the Stein kernel constructed by Fathi
in \cite{fathi2018stein} using moment maps. However, some technical details
remain to be solved at this point of our investigations.

Furthermore, notice that Theorem \ref{log-sob-stein-kernel} allows to
recover in dimension one and with a worst constant, a result due to \cite%
{MR2797936} in dimension $d\geq 1$, stating that generalized Cauchy
distributions (see Section \ref{ssection_weighted_Poin}\ above for a
definition) verify the following weighted log-Sobolev inequality,%
\begin{equation}
\ent%
_{\nu _{\beta }}\left( g^{2}\right) \leq \frac{1}{\beta -1}\int \left(
1+x^{2}\right) ^{2}\left( g^{\prime }\left( x\right) \right) ^{2}d\nu
_{\beta }\left( x\right) \text{ , }\beta >1\text{ .}
\label{ineq_log_Sob_Cauchy}
\end{equation}%
Using the Muckenhoupt-type criterion due to \cite{MR1682772}, we can
actually sharpen and extend the previous inequality in the following way.
There exists a constant $C_{\beta }$ such that, for any smooth function $g$,%
\begin{equation}
\ent%
_{\nu _{\beta }}\left( g^{2}\right) \leq C_{\beta }\int \left(
1+x^{2}\right) \log \left( 1+x^{2}\right) \left( g^{\prime }\left( x\right)
\right) ^{2}d\nu _{\beta }\left( x\right) \text{ , }\beta >1/2\text{ .}
\label{ineq_log_Sob_Cauchy_improved}
\end{equation}%
Indeed, using the quantities defined in (\ref{def_Muck_log_Sob}) below, with 
$d\eta =d\nu _{\beta }=Z_{\beta }^{-1}\left( 1+x^{2}\right) ^{-\beta }dx$
and $n\left( t\right) =Z_{\beta }^{-1}\left( 1+x^{2}\right) ^{-\beta +1}\log
\left( 1+x^{2}\right) $, we note that $m=0$, $L_{+}=L_{-}$ and by simple
computations, we have for $\beta >1/2$, 
\begin{equation*}
\int_{m}^{x}\frac{dt}{n\left( t\right) }=O_{x\rightarrow +\infty }\left( 
\frac{x^{2\beta -1}}{\log x}\right) \text{ \ \ \ \ , \ \ \ \ }\Lambda \left(
\nu _{\beta }\left( \left[ x,+\infty \right) \right) \right)
=O_{x\rightarrow +\infty }\left( x^{-2\beta +1}\log x\right) \text{ .}
\end{equation*}%
This means that $L_{+}$ is finite and so, Inequality (\ref%
{ineq_log_Sob_Cauchy_improved}) is valid, which constitutes an improvement
upon inequality (\ref{ineq_log_Sob_Cauchy}). A natural question that remains
open, is wether the weighted log-Sobolev inequality obtained for generalized
Cauchy measures in \cite{MR2797936} can be also improved in dimension $d\geq
2$?

\begin{proof}
Let us denote $\Lambda \left( x\right) =-x\log x$ for $x\geq 0$ - setting by
continuity $\Lambda (0)=0$. By the Muckenhoupt-type criterion derived in 
\cite{MR1682772} (see also Theorem 1 in \cite{MR2052235}), we know that if 
\begin{equation}
L_{+}=\sup_{x>m}\Lambda \left( \eta \left( \left[ x,+\infty \right) \right)
\right) \int_{m}^{x}\frac{dt}{n\left( t\right) }\text{ \ \ \ \ and \ \ \ \ }%
L_{-}=\sup_{x<m}\Lambda \left( \eta \left( \left( -\infty ,x\right] \right)
\right) \int_{x}^{m}\frac{dt}{n\left( t\right) }  \label{def_Muck_log_Sob}
\end{equation}%
are finite, then it holds for any smooth function $g$,%
\begin{equation*}
\ent%
_{\eta }\left( g^{2}\right) \leq C_{LS}\int \left( g^{\prime }\right)
^{2}d\chi \text{ ,}
\end{equation*}%
where $m$ is a median of $\eta $, $n$ is the absolutely continuous component
of $\chi $ and $1/150\leq C_{LS}/\max \left\{ L_{-},L_{+}\right\} \leq 468$.
So by taking $\eta =\nu $ and $n=\tau _{\nu }^{2}p$, we only have to check
that $L_{+}$ and $L_{-}$ are finite. We will only detail computations
related to $L_{+}$ since $L_{-}$ can be handled with symmetric arguments.

Without loss of generality, we may assume that $\nu $ is centered, $\int
xd\nu \left( x\right) =0$. Let us denote $R\left( x\right) =\Lambda \left(
\nu \left( \left[ x,+\infty \right) \right) \right) \int_{m}^{x}\left( \tau
_{\nu }^{2}\left( t\right) p\left( t\right) \right) ^{-1}dt$. First remark
that since $\tau _{\nu }\left( t\right) p\left( t\right) =\int_{t}^{\infty
}\left( y-\mu \right) p\left( y\right) dy$, we have for any $x\in \left(
a,b\right) \dbigcap \left( m,+\infty \right) $,%
\begin{equation*}
\int_{m}^{x}\frac{dt}{n\left( t\right) }=\int_{m}^{x}\frac{p\left( t\right)
dt}{\left( \tau _{\nu }\left( t\right) p\left( t\right) \right) ^{2}}\leq 
\frac{1}{\left( \int_{m}^{\infty }\left( y-\mu \right) p\left( y\right)
dy\right) ^{2}}\vee \frac{1}{\left( \int_{x}^{\infty }\left( y-\mu \right)
p\left( y\right) dy\right) ^{2}}<+\infty \text{ .}
\end{equation*}%
This yieds that for any segment $\left[ c,d\right] \subset \left[ m,b\right) 
$, we have $\sup_{x\in \left[ c,d\right] }R\left( x\right) <+\infty $. We
will thus focus on the behavior of the function $R$ around the supremum $b$
of the support. Recall Formula (\ref{formula_density_2}),%
\begin{equation}
p\left( x\right) \tau _{\nu }\left( x\right) =\mathbb{E}\left[ \left(
X\right) _{+}\right] \exp \left( -\int_{0}^{x}\frac{y}{\mathcal{\tau }_{\nu
}\left( y\right) }dy\right) \text{ .}  \label{formula_exp_density}
\end{equation}%
Moreover, Identity (\ref{formula_tail}) applied to $h=Id$ and $x_{0}=0$,
gives for any $x\in I\left( \nu \right) $, $x>\max \left\{ m,0\right\} $,%
\begin{equation*}
\nu \left( \left[ x,+\infty \right) \right) \leq \mathbb{E}\left[ \left(
X\right) _{+}\right] \frac{1}{x}\exp \left( -\int_{0}^{x}\frac{y}{\mathcal{%
\tau }_{\nu }\left( y\right) }dy\right) \text{ .}
\end{equation*}%
Let $\delta >\max \left\{ m,0\right\} $ be such that $\delta \in I\left( \nu
\right) $ - note that $\delta $ exists since $\nu $ is centered. By using
formula (\ref{formula_exp_density}) and an integration by parts, it holds,
for any $x\in I\left( \nu \right) \cap \left[ \delta ,+\infty \right) $,%
\begin{gather}
\mathbb{E}\left[ \left( X\right) _{+}\right] \int_{m}^{x}\frac{dt}{\tau
_{\nu }^{2}\left( t\right) p\left( t\right) }=\mathbb{E}\left[ \left(
X\right) _{+}\right] \int_{m}^{\delta }\frac{dt}{\tau _{\nu }^{2}\left(
t\right) p\left( t\right) }+\int_{\delta }^{x}\frac{1}{t}\frac{t}{\tau _{\nu
}\left( t\right) }\exp \left( \int_{0}^{t}\frac{y}{\mathcal{\tau }_{\nu
}\left( y\right) }dy\right) dt  \notag \\
\leq \mathbb{E}\left[ \left( X\right) _{+}\right] \int_{m}^{\delta }\frac{dt%
}{\tau _{\nu }^{2}\left( t\right) p\left( t\right) }+\frac{1}{x}\exp \left(
\int_{0}^{x}\frac{y}{\mathcal{\tau }_{\nu }\left( y\right) }dy\right)
+\int_{\delta }^{x}\frac{1}{t^{2}}\exp \left( \int_{0}^{t}\frac{y}{\mathcal{%
\tau }_{\nu }\left( y\right) }dy\right) dt  \label{majo_1}
\end{gather}%
Furthermore, notice that the function $\Lambda $ is increasing - and in
particular bounded - at the neighborhood of $0$ (more precisely on $\left[
0,1/e\right] $). Hence, if $b<+\infty $ and $\tau _{\nu }^{-1}$ is
integrable at $b^{-}$ then it is easily seen from the previous computations
that $\sup_{x>m}\int_{m}^{x}\left( \tau _{\nu }^{2}\left( t\right) p\left(
t\right) \right) ^{-1}dt<+\infty $ and so, $L_{+}$ is finite.

From now on, assume that $b=+\infty $. Hence, inequality (\ref{majo_1}) is
valid with $\delta =1$. If $x$ is large enough, it holds%
\begin{equation*}
\Lambda \left( \nu \left( \left[ x,+\infty \right) \right) \right) \leq
\Lambda \left( \mathbb{E}\left[ \left( X\right) _{+}\right] \frac{\exp
\left( -\int_{0}^{x}\frac{y}{\mathcal{\tau }_{\nu }\left( y\right) }%
dy\right) }{x}\right)
\end{equation*}%
and by using (\ref{majo_1}) we get that there exists a constant $r>0$ such
that, for $x$ large enough,%
\begin{eqnarray}
R\left( x\right) &\leq &\underset{(I)}{r+\underbrace{\frac{1}{x^{2}}\left(
\log \left( \frac{x}{\mathbb{E}\left[ \left( X\right) _{+}\right] }\right)
+\int_{0}^{x}\frac{y}{\mathcal{\tau }_{\nu }\left( y\right) }dy\right) }}
\label{ineq_upper} \\
&&+\underset{(II)}{\underbrace{\left( \log \left( \frac{x}{\mathbb{E}\left[
\left( X\right) _{+}\right] }\right) +\int_{0}^{x}\frac{y}{\mathcal{\tau }%
_{\nu }\left( y\right) }dy\right) \frac{1}{x}\exp \left( -\int_{0}^{x}\frac{y%
}{\mathcal{\tau }_{\nu }\left( y\right) }dy\right) \int_{1}^{x}\frac{1}{t^{2}%
}\exp \left( \int_{0}^{t}\frac{y}{\mathcal{\tau }_{\nu }\left( y\right) }%
dy\right) dt}}\text{ .}  \notag
\end{eqnarray}%
If $\tau _{\nu }\left( x\right) \geq c_{-}>0$ for $x$ large enough, then
quantity $\left( I\right) $ remains bounded around $+\infty $. As for
quantity $(II)$, notice that, for any $a>1$,%
\begin{equation*}
\int_{x/a}^{x}\frac{1}{t^{2}}\exp \left( \int_{0}^{t}\frac{y}{\mathcal{\tau }%
_{\nu }\left( y\right) }dy\right) dt\leq \frac{a-1}{x}\exp \left(
\int_{0}^{x}\frac{y}{\mathcal{\tau }_{\nu }\left( y\right) }dy\right)
\end{equation*}%
and so, for $x$ large enough,%
\begin{equation*}
\int_{1}^{x}\frac{1}{t^{2}}\exp \left( \int_{0}^{t}\frac{y}{\mathcal{\tau }%
_{\nu }\left( y\right) }dy\right) dt\leq \frac{a-1}{x}\exp \left(
\int_{0}^{x}\frac{y}{\mathcal{\tau }_{\nu }\left( y\right) }dy\right)
+\int_{1}^{x/a}\frac{1}{t^{2}}\exp \left( \int_{0}^{t}\frac{y}{\mathcal{\tau 
}_{\nu }\left( y\right) }dy\right) dt\text{ .}
\end{equation*}%
If in addition, $\tau _{\nu }\left( x\right) \leq c_{+}x^{2}/\log x$ for $x$
large enough, then%
\begin{equation*}
\int_{x/a}^{x}\frac{y}{\tau _{\nu }\left( y\right) }dy\geq
c_{+}^{-1}\int_{x/a}^{x}\frac{\log y}{y}dy=c_{+}^{-1}\log a\log
x-c_{+}^{-1}\left( \log a\right) ^{2}\text{ .}
\end{equation*}%
Take $a=\exp \left( c_{+}\right) $, then there exists a constant $%
C_{c_{+}}>0 $ depending on $c_{+}$ such that, for $x$ large enough,%
\begin{equation*}
\int_{1}^{x/a}\frac{1}{t^{2}}\exp \left( \int_{0}^{t}\frac{y}{\mathcal{\tau }%
_{\nu }\left( y\right) }dy\right) dt\leq \exp \left( \int_{0}^{x/a}\frac{y}{%
\mathcal{\tau }_{\nu }\left( y\right) }dy\right) \leq \frac{C_{c_{+}}}{x}%
\exp \left( \int_{0}^{x}\frac{y}{\mathcal{\tau }_{\nu }\left( y\right) }%
dy\right) \text{ .}
\end{equation*}%
This gives that, up to a constant, quantity $\left( II\right) $ is bounded
from above by quantity $\left( I\right) $ and the conclusion follows if $%
c_{-}\leq \tau _{\nu }\left( x\right) \leq c_{+}x^{2}/\log x$ for $x$ large
enough.

We discuss now the case where $c_{-}x\leq \tau _{\nu }\left( x\right) $ for
a constant $c_{-}>0$ and for $x$ large enough. As in particular $\tau _{\nu
}\geq c_{-}>0$ for $x$ large enough, then quantity $(I)$ remains bounded
around $+\infty $. We also have, for $x$ large enough,%
\begin{equation*}
\int_{0}^{x}\frac{y}{\mathcal{\tau }_{\nu }\left( y\right) }dy\leq \frac{2}{%
c_{-}}x
\end{equation*}%
and%
\begin{equation*}
\int_{1}^{x}\frac{1}{t^{2}}\exp \left( \int_{0}^{t}\frac{y}{\mathcal{\tau }%
_{\nu }\left( y\right) }dy\right) dt\leq \exp \left( \int_{0}^{x}\frac{y}{%
\mathcal{\tau }_{\nu }\left( y\right) }dy\right) \text{ ,}
\end{equation*}%
which is sufficient to ensure that $\left( II\right) =O_{x\rightarrow
+\infty }\left( 1\right) $ and $L_{+}<+\infty $.

To conclude the proof, it remains to consider the situation where $b=+\infty 
$ and for any $c>0$ there exists $x_{0}$ such that for any $x\geq x_{0}$, $%
\tau _{\nu }\left( x\right) \leq c$. In this case, we have to show that $%
L_{+}=+\infty $. Since there exists $x_{1}$ such that $\tau _{\nu }\left(
x\right) \leq 1$ for $x_{1}\leq x$, then for $x_{1}\leq x\leq y$,%
\begin{equation*}
\exp \left( -\int_{x}^{y}\frac{t}{\mathcal{\tau }_{\nu }\left( t\right) }%
dy\right) \leq \exp \left( -\frac{y^{2}-x^{2}}{2}\right)
\end{equation*}%
Notice also that, for any $x\geq 2$, we have by integration by parts,%
\begin{equation*}
\int_{x}^{+\infty }\frac{1}{y^{2}}\exp \left( -\frac{y^{2}-x^{2}}{2}\right)
dy\leq \frac{1}{x^{3}}\leq \frac{1}{4x}\text{ .}
\end{equation*}%
Hence, by formula (\ref{formula_tail}) applied to $h=Id$ and $x_{0}=0$, we
have for $x\geq \max \left\{ x_{1},2\right\} $,%
\begin{eqnarray*}
\nu \left( \left[ x,+\infty \right) \right) &=&\mathbb{E}\left[ \left(
X\right) _{+}\right] \exp \left( -\int_{0}^{x}\frac{y}{\mathcal{\tau }_{\nu
}\left( y\right) }dy\right) \left( \frac{1}{x}-\int_{x}^{+\infty }\frac{1}{%
y^{2}}\exp \left( -\int_{x}^{y}\frac{t}{\mathcal{\tau }_{\nu }\left(
t\right) }dt\right) dy\right) \\
&\geq &\mathbb{E}\left[ \left( X\right) _{+}\right] \exp \left( -\int_{0}^{x}%
\frac{y}{\mathcal{\tau }_{\nu }\left( y\right) }dy\right) \left( \frac{1}{x}%
-\int_{x}^{+\infty }\frac{1}{y^{2}}\exp \left( -\frac{y^{2}-x^{2}}{2}\right)
dy\right) \\
&\geq &\frac{3\mathbb{E}\left[ \left( X\right) _{+}\right] }{4x}\exp \left(
-\int_{0}^{x}\frac{y}{\mathcal{\tau }_{\nu }\left( y\right) }dy\right) \text{
.}
\end{eqnarray*}%
It is then easily seen that Inequality (\ref{ineq_upper}) can be reversed,
up to a positive multiplicative constant\ at the right-hand side (together
with changing the value of $r$ that can be negative). Also, if for any $c>0$
there exists $x_{0}$ such that for any $x\geq x_{0}$, $\tau _{\nu }\left(
x\right) \leq c$, then the term $x^{-2}\int_{0}^{x}\mathcal{\tau }_{\nu
}^{-1}\left( y\right) ydy$ goes to $+\infty $ when $x$ tends to $+\infty $.
This gives $L_{+}=+\infty $ and concludes the proof.
\end{proof}

\subsection{Asymmetric Brascamp-Lieb-type inequalities\label%
{ssection_asym_BL}}

The celebrated\ Brascamp-Lieb inequality \cite{MR0450480}\ states that if a
measure $\pi $ on $\mathbb{R}^{d}$, $d\geq 1$, is strictly log-concave -
that is $\varphi =-\ln p$ is convex and $%
\Hess%
\left( \varphi \right) \left( x\right) $ is a positive definite symmetric
matrix for any $x\in 
\Supp%
\left( \pi \right) $ - then for any smooth function $h$,%
\begin{equation*}
\var%
_{\pi }\left( h\right) \leq \int \nabla h^{T}(%
\Hess%
\left( \varphi \right) )^{-1}\nabla hd\pi \text{ .}
\end{equation*}%
Considering the one dimensional case $d=1$, Menz and Otto \cite%
{Menz-Otto:2013} proved that for any smooth $g\in L_{1}\left( \pi \right) $, 
$h\in L_{\infty }\left( \pi \right) $, 
\begin{equation}
\left\vert 
\cov%
_{\pi }\left( g,h\right) \right\vert \leq \left\Vert \frac{h^{\prime }}{%
\varphi ^{\prime \prime }}\right\Vert _{\infty }\left\Vert g^{\prime
}\right\Vert _{1}\text{ .}  \label{asymm_BL}
\end{equation}%
The authors call the later inequality an asymmetric Brascamp-Lieb
inequality. This inequality has been then generalized to higher dimension (%
\cite{CarlenCordero-ErausquinLieb}) and beyond the log-concave case (\cite%
{arnaudon2016intertwinings}).

Considering the covariance of two smooth functions, we will derive in the
next proposition inequalities involving the derivative of these functions
and as well as quantities related to the Stein kernel.

\begin{proposition}
Assume that $\tau _{\nu }>0$ on the support of $\nu $. Then for any $p,q\in %
\left[ 1,+\infty \right] $, $p^{-1}+q^{-1}=1$,%
\begin{equation}
\left\vert 
\cov%
\left( g,h\right) \right\vert \leq \left\Vert g^{\prime }\tau _{\nu
}\right\Vert _{p}\left\Vert \frac{\mathcal{\bar{L}}\left( h\right) }{\tau
_{\nu }}\right\Vert _{q}  \label{ineq_cov_gene_prop}
\end{equation}%
Furthermore, if $p=1$ and $q=+\infty $, we have 
\begin{equation}
\left\vert 
\cov%
\left( g,h\right) \right\vert \leq \left\Vert h^{\prime }\right\Vert
_{\infty }\left\Vert g^{\prime }\tau _{\nu }\right\Vert _{1}\text{ .}
\label{ineq_cov_1_infty_prop}
\end{equation}%
If $p,q\in \left( 1,+\infty \right) $, we also have%
\begin{equation}
\left\vert 
\cov%
\left( g,h\right) \right\vert \leq \left\Vert g^{\prime }\tau _{\nu
}\right\Vert _{p}\left\Vert h^{\prime }m^{1/q}\right\Vert _{q}\text{ ,}
\label{ineq_cov_p_q_prop}
\end{equation}%
where $m\left( x\right) =p\left( x\right) ^{-1}\int k\left( x,y\right) \tau
_{\nu }^{-1}\left( y\right) dy$ on the support of $\nu $. If in addition, $%
\tau _{\nu }\geq \sigma _{\min }^{2}>0$ $a.s.$, then%
\begin{equation}
\left\vert 
\cov%
\left( g,h\right) \right\vert \leq \sigma _{\min }^{-2}\left\Vert g^{\prime
}\tau _{\nu }\right\Vert _{p}\left\Vert h^{\prime }\tau _{\nu
}^{1/q}\right\Vert _{q}\text{ .}  \label{ineq_cov_p_q_2_prop}
\end{equation}
\end{proposition}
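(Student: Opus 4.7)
The plan is to combine two tools already established: the covariance identity $\cov(g,h) = \mathbb{E}[g'(X)\mathcal{\bar{L}}h(X)]$ from \eqref{cov_id_connected}, and the integral representation $\mathcal{\bar{L}}h(x) = p(x)^{-1}\int k_\nu(x,y)h'(y)\,dy$ from \eqref{identity_stein_factor}. The key observation underlying every inequality is that, by formula \eqref{formula_kernel}, for every $x \in I(\nu)$ the function $y \mapsto k_\nu(x,y)/(p(x)\tau_\nu(x))$ is a probability density on $\mathbb{R}$; equivalently, $\mathcal{\bar{L}}h(x)/\tau_\nu(x)$ is the expectation of $h'$ under this probability measure.

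For \eqref{ineq_cov_gene_prop}, I would simply write
\[
\cov(g,h) = \mathbb{E}\bigl[(g'\tau_\nu)(X)\cdot(\mathcal{\bar{L}}h/\tau_\nu)(X)\bigr]
\]
and apply Hölder's inequality in $L_p(\nu)$ and $L_q(\nu)$. Specializing to $p=1$, $q=+\infty$ and using that $\mathcal{\bar{L}}h(x)/\tau_\nu(x)$ is a weighted average of $h'(y)$, hence bounded by $\|h'\|_\infty$, immediately yields \eqref{ineq_cov_1_infty_prop}.

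For \eqref{ineq_cov_p_q_prop}, I would apply Jensen's inequality for the convex function $t\mapsto|t|^q$ with the same probability density, giving
\[
\left|\frac{\mathcal{\bar{L}}h(x)}{\tau_\nu(x)}\right|^q \leq \int |h'(y)|^q\,\frac{k_\nu(x,y)}{p(x)\tau_\nu(x)}\,dy.
\]
Multiplying by $p(x)$, integrating in $x$, using Fubini, and exploiting the symmetry $k_\nu(x,y) = k_\nu(y,x)$ together with the definition of $m$ then produces
\[
\left\|\frac{\mathcal{\bar{L}}h}{\tau_\nu}\right\|_q^q \leq \int |h'(y)|^q\, m(y)\,d\nu(y) = \|h'\,m^{1/q}\|_q^q.
\]
Combined with \eqref{ineq_cov_gene_prop}, this gives \eqref{ineq_cov_p_q_prop}. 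Finally, \eqref{ineq_cov_p_q_2_prop} follows by substitution: under $\tau_\nu \geq \sigma_{\min}^2$ almost surely, the definition of $m$ together with formula \eqref{formula_kernel} yields $m(x) \leq \sigma_{\min}^{-2}\tau_\nu(x)$, hence $m^{1/q} \leq \sigma_{\min}^{-2/q}\tau_\nu^{1/q}$, which is exactly what is needed to conclude.

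I do not anticipate a real obstacle here: every bound reduces to a single application of Hölder's or Jensen's inequality on top of the explicit kernel formula for $\mathcal{\bar{L}}h$. The only nontrivial bookkeeping is swapping the roles of $x$ and $y$ under the integral of $k_\nu$ when passing from $\|\mathcal{\bar{L}}h/\tau_\nu\|_q$ to a quantity expressed through $m$; this is resolved by the symmetry of $k_\nu$ and by the fact that $m$ is itself defined as an integral in the second argument of $k_\nu$.
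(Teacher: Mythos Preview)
Your proposal is correct and follows essentially the same route as the paper: the covariance identity \eqref{cov_id_connected} combined with H\"older for \eqref{ineq_cov_gene_prop}, the probability-density interpretation of $k_\nu(x,\cdot)/(p(x)\tau_\nu(x))$ for \eqref{ineq_cov_1_infty_prop}, Jensen plus Fubini (using the symmetry of $k_\nu$) for \eqref{ineq_cov_p_q_prop}, and the pointwise bound $m\le\sigma_{\min}^{-2}\tau_\nu$ for \eqref{ineq_cov_p_q_2_prop}. One small remark: your final substitution actually yields the sharper constant $\sigma_{\min}^{-2/q}$ rather than the $\sigma_{\min}^{-2}$ appearing in the statement; the paper's own proof has the same feature, so the stated exponent is in all likelihood a typo there.
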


Note that if $\nu $ is strongly log-concave, meaning that $\varphi ^{\prime
\prime }\geq c>0$ for $\varphi =-\ln p$, then the asymmetric Brascamp-Lieb
inequality (\ref{asymm_BL}) and covariance identity (\ref%
{ineq_cov_1_infty_prop}) both induce the following inequality,%
\begin{equation*}
\left\vert 
\cov%
\left( g,h\right) \right\vert \leq c^{-1}\left\Vert h^{\prime }\right\Vert
_{\infty }\left\Vert g^{\prime }\right\Vert _{1}\text{ .}
\end{equation*}

\begin{proof}
By (\ref{cov_id_Lbar}) and H\"{o}lder's inequality, we have%
\begin{eqnarray*}
\left\vert 
\cov%
\left( g,h\right) \right\vert &=&\int_{\mathbb{R}}g^{\prime }\tau _{\nu }%
\frac{\mathcal{\bar{L}}h}{\tau _{\nu }}d\nu \\
&\leq &\left\Vert g^{\prime }\tau _{\nu }\right\Vert _{p}\left\Vert \frac{%
\mathcal{\bar{L}}\left( h\right) }{\tau _{\nu }}\right\Vert _{q}\text{ .}
\end{eqnarray*}%
So Inequality (\ref{ineq_cov_gene_prop}) is proved. To prove (\ref%
{ineq_cov_1_infty_prop}) it suffices to apply (\ref{ineq_cov_gene_prop})
with $p=1$ and $q=+\infty $ and remark that%
\begin{equation*}
\left\Vert \frac{\mathcal{\bar{L}}\left( h\right) }{\tau _{\nu }}\right\Vert
_{\infty }=\left\Vert \int h^{\prime }\left( y\right) \frac{k_{\nu }\left(
x,y\right) dy}{\int k_{\nu }\left( x,z\right) dz}\right\Vert _{\infty }\leq
\left\Vert h^{\prime }\right\Vert _{\infty }\text{ .}
\end{equation*}%
Now, to prove (\ref{ineq_cov_p_q_prop}) simply note that%
\begin{equation*}
\left\Vert \frac{\mathcal{\bar{L}}\left( h\right) }{\tau _{\nu }}\right\Vert
_{q}=\left\Vert \int h^{\prime }\left( y\right) \frac{k_{\nu }\left(
x,y\right) dy}{\int k_{\nu }\left( x,z\right) dz}\right\Vert _{q}\leq \int
\int \left( h^{\prime }\left( y\right) \right) ^{q}\frac{k_{\nu }\left(
x,y\right) }{\int k_{\nu }\left( x,z\right) dz}dxdy=\left\Vert h^{\prime
}m^{1/q}\right\Vert _{q}\text{ .}
\end{equation*}%
To deduce (\ref{ineq_cov_p_q_2_prop}) from (\ref{ineq_cov_p_q_prop}), just
remark that%
\begin{equation*}
m\left( x\right) =\frac{1}{p\left( x\right) }\int \frac{k_{\nu }\left(
x,y\right) }{\tau _{\nu }\left( y\right) }dy\leq \frac{1}{\sigma _{\min
}^{2}p\left( x\right) }\int k_{\nu }\left( x,y\right) dy=\frac{\tau _{\nu
}\left( x\right) }{\sigma _{\min }^{2}}\text{ .}
\end{equation*}
\end{proof}

\subsection{Isoperimetric constant\label{section_isoperimetric_constant}}

Let us complete this section about some functional inequalities linked to
the Stein kernel by studying the isoperimetric constant. Recall that for a
measure $\nu $ on $\mathbb{R}^{d}$, an isoperimetric inequality is an
inequality of the form%
\begin{equation}
\nu ^{+}\left( A\right) \geq c\min \left\{ \nu \left( A\right) ,1-\nu \left(
A\right) \right\} \text{ ,}  \label{def_isop}
\end{equation}%
where $c>0$, $A$ is an arbitrary measurable set in $\mathbb{R}^{d}$ and $\nu
^{+}\left( A\right) $ stands for the $\nu $-perimeter of $A$, defined to be%
\begin{equation*}
\nu ^{+}\left( A\right) =\lim \inf_{r\rightarrow 0^{+}}\frac{\nu \left(
A^{r}\right) -\nu \left( A\right) }{r}\text{ ,}
\end{equation*}%
with $A^{r}=\left\{ x\in \mathbb{R}^{d}:\exists a\in A,\text{ }\left\vert
x-a\right\vert <r\right\} $ the $r$-neighborhood of $A$. The optimal value
of $c=Is\left( \nu \right) $ in (\ref{def_isop}) is referred to as the
isoperimetric constant of $\nu $.

The next proposition shows that existence of a uniformly bounded Stein
kernel is essentially sufficient for guaranteeing existence of a positive
isoperimetric constant.

\begin{proposition}
\label{prop_isop}Assume that the probability measure $\nu $ has a connected
support, finite first moment and continuous density $p$ with respect to the
Lebesgue measure. Assume also that its Stein kernel $\tau _{\nu }$ is
uniformly bounded on $I\left( \nu \right) $, $\left\Vert \tau _{\nu
}\right\Vert _{\infty }<+\infty $. Then $\nu $ admits a positive
isoperimetric constant $Is\left( \nu \right) >0$.
\end{proposition}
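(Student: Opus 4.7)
The plan is to apply the one-dimensional Bobkov--Houdr\'e formula for the isoperimetric constant, which (under our hypotheses on the support and continuity of $p$) takes the form
\[
\frac{1}{Is(\nu)} = \sup_{x\in I(\nu)} \frac{\min\{F(x),\,1-F(x)\}}{p(x)},
\]
so it suffices to prove the right-hand side is finite. Note first that since $\nu$ has a density and a finite mean, $\mu=\mathbb{E}[X]$ lies in the open interval $(a,b)=I(\nu)$ (a density cannot concentrate at an endpoint).

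The first step is to upgrade almost-sure positivity of $p$ to everywhere positivity on $I(\nu)$. Writing formula (\ref{kernel_formula}) as $\phi(x):=\int_x^\infty (y-\mu)p(y)\,dy = \tau_\nu(x)\,p(x)$, both $\phi$ and $p$ are continuous on $I(\nu)$. The essential bound $\tau_\nu\leq C:=\|\tau_\nu\|_\infty$ thus extends by continuity to the inequality $\phi(x)\leq C\,p(x)$ for every $x\in I(\nu)$; since $\phi>0$ on $(a,b)$ this forces $p(x)>0$ throughout $I(\nu)$, and in particular $p(\mu)>0$.

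The second step controls $\min\{F,1-F\}/p$ away from $\mu$ by a Markov-type argument built from the Stein kernel. For $x>\mu$, $(x-\mu)(1-F(x))\leq \phi(x)\leq C\,p(x)$, hence $(1-F(x))/p(x)\leq C/(x-\mu)$. Using $\int (y-\mu)p(y)\,dy=0$ one rewrites $\phi(x)=\int_{-\infty}^x (\mu-y)p(y)\,dy$, and the symmetric estimate yields $F(x)/p(x)\leq C/(\mu-x)$ for $x<\mu$. Hence for any $\delta>0$, on $\{|x-\mu|\geq \delta\}\cap I(\nu)$ the ratio is bounded by $C/\delta$.

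The final step handles a neighborhood of $\mu$, where the previous Markov bound degenerates. Choose $\delta>0$ small enough that $[\mu-\delta,\mu+\delta]\subset (a,b)$ and, by continuity of $p$ at $\mu$, $p\geq p(\mu)/2$ on this interval; there $\min\{F,1-F\}/p\leq 2/p(\mu)$. Combining the two regimes gives a uniform upper bound on $\min\{F,1-F\}/p$, so $Is(\nu)\geq \min\{\delta/C,\,p(\mu)/2\}>0$. The only mildly delicate point is precisely this degeneracy of the Stein-kernel estimate near the mean, which is resolved by using continuity of $p$ together with the strict positivity $p(\mu)>0$ established in the first step.
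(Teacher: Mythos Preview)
Your proof is correct and follows essentially the same route as the paper: invoke the Bobkov--Houdr\'e formula for $Is(\nu)$, use the Stein kernel via the Markov-type bound $(x-\mu)(1-F(x))\leq \tau_\nu(x)p(x)$ to control the ratio away from the mean, and use continuity of $p$ on a compact neighborhood of $\mu$ to handle the remaining region. Your argument is in fact slightly more careful than the paper's: you explicitly derive $p>0$ on all of $I(\nu)$ from the continuous inequality $\phi\leq Cp$ and the strict positivity of $\phi$, whereas the paper tacitly assumes $\inf_{|x-\mu|\leq\varepsilon/2}p(x)>0$ without justification.
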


More precisely, from the proof of Proposition \ref{prop_isop} (see below),
we can extract a quantitative estimate of the isoperimetric constant.
Indeed, under the assumptions of Proposition \ref{prop_isop}, denote by $%
q_{\beta }$ the quantile of order $\beta \in \left( 0,1\right) $ of the
measure $\nu $ and by $\mu $ its mean. Then, for any $\alpha \in \left(
0,1\right) $ such that $q_{\alpha }<\mu <q_{1-\alpha }$, we have%
\begin{equation*}
Is\left( \nu \right) \geq \min \left\{ \alpha ^{-1}\min_{x\in \left[
q_{\alpha },q_{1-\alpha }\right] }\left\{ p\left( x\right) \right\} ,\frac{%
\min \left\{ \mu -q_{\alpha },q_{1-\alpha }-\mu \right\} }{\left\Vert \tau
_{\nu }\right\Vert _{\infty }}\right\} \text{ .}
\end{equation*}

Measures having a uniformly bounded Stein kernel include strongly
log-concave measures - as proved in Section \ref{section_cov_identity}\
above -, but also smooth perturbations of the normal distribution that are
Lipschitz and bounded from below by a positive constant (see Remark 2.9 in 
\cite{LedoucNourdinPeccati:15}). In addition, bounded perturbations of
measures having a bounded Stein kernel given by formula (\ref{kernel_formula}%
) and a density that is bounded away from zero around its mean also have a
bounded Stein kernel. Indeed, take $\varkappa $ a measure having a density $%
p_{\varkappa }$ with connected support $\left[ a,b\right] \subset \mathbb{%
\bar{R}}$, $a<0<b$, mean zero and a finite first moment so that its Stein
kernel $\tau _{\varkappa }$ is unique - up to sets of Lebesgue measure zero
- and given by formula (\ref{kernel_formula}). Assume also that the density $%
p_{\varkappa }$ is bounded away from zero around zero, that is there exists $%
L,\delta >0$ such that $\left[ -\delta ,\delta \right] \subset \left(
a,b\right) $ and $\inf_{x\in \left[ -\delta ,\delta \right] }p_{\varkappa
}\left( x\right) \geq L>0$. Now, consider a function $\rho $ on $\mathbb{R}$
such that $C^{-1}\leq \rho \left( x\right) \leq C$ for any $x\in \left(
a,b\right) $ and for some constant $C>0$, such that $p\left( x\right) =\rho
\left( x\right) p_{\varkappa }\left( x\right) $ is the density of a
probability measure $\nu $ with support $\left[ a,b\right] $. As $\nu $ has
a finite first moment, it also admits a Stein kernel $\tau _{\nu }$, given
by formula (\ref{kernel_formula}). Furthermore, $\tau _{\nu }$ is uniformly
bounded and easy computations using the formula (\ref{kernel_formula}) give%
\begin{equation*}
\left\Vert \tau _{\nu }\right\Vert _{\infty }\leq C^{2}\left( \left\Vert
\tau _{\varkappa }\right\Vert _{\infty }+\left\vert \mu \right\vert \max
\left\{ \frac{1}{L},\frac{\left\Vert \tau _{\varkappa }\right\Vert _{\infty }%
}{\delta }\right\} \right) \text{ ,}
\end{equation*}%
where $\mu $ is the mean of $\nu $.

It would be interesting to know if Proposition \ref{prop_isop} also holds in
higher dimension, but this question remains open. A further natural question
would be: does a measure having a Stein kernel satisfy a weighted
isoperimetric-type inequality, with a weight related to the Stein kernel? So
far, we couldn't give an answer to this question. Note that Bobkov and
Ledoux 
\cite{MR2797936, MR2510011}
proved some weighted Cheeger and weighted isoperimetric-type inequalities
for the generalized Cauchy and for $\kappa $-concave distributions.

\begin{proof}
Let $F$ be the cumulative distribution function of $\nu $, $\mu $ be its
mean and let $\varepsilon >0$ be such that $\left[ \mu -\varepsilon ,\mu
+\varepsilon \right] \subset I\left( \nu \right) $. Recall (\cite{BobHoud97}%
, Theorem 1.3) that the isoperimetric constant associated to $\nu $ satisfies%
\begin{equation*}
Is\left( \nu \right) =%
\ess%
\inf_{a<x<b}\frac{p\left( x\right) }{\min \left\{ F\left( x\right)
,1-F\left( x\right) \right\} }\text{ ,}
\end{equation*}%
where $a<b$ are the edges of the support of $\nu $. Take $x\in I\left( \nu
\right) $ such that $x-\mu \geq \varepsilon /2$, then%
\begin{eqnarray*}
\tau _{v}\left( x\right) &=&\frac{1}{p\left( x\right) }\int_{x}^{\infty
}\left( y-\mu \right) p\left( y\right) dy \\
&\geq &\varepsilon \frac{1-F\left( x\right) }{2p\left( x\right) }\geq \frac{%
\varepsilon }{2}\frac{\min \left\{ F\left( x\right) ,1-F\left( x\right)
\right\} }{p\left( x\right) }\text{ .}
\end{eqnarray*}%
The same estimate holds for $x\leq \mu -\varepsilon /2$ since $\tau
_{v}\left( x\right) =p\left( x\right) ^{-1}\int_{-\infty }^{x}\left( \mu
-y\right) p\left( y\right) dy$ . Hence,%
\begin{equation}
\ess%
\inf_{x\in I\left( \nu \right) ,\text{ }\left\vert x-\mu \right\vert \geq
\varepsilon /2}\frac{p\left( x\right) }{\min \left\{ F\left( x\right)
,1-F\left( x\right) \right\} }\geq \frac{2}{\varepsilon \left\Vert \tau
_{\nu }\right\Vert _{\infty }}>0\text{ .}  \label{ineq_far}
\end{equation}%
Furthermore, we have%
\begin{equation}
\inf_{\left\vert x-\mu \right\vert \leq \varepsilon /2}\frac{p\left(
x\right) }{\min \left\{ F\left( x\right) ,1-F\left( x\right) \right\} }\geq 
\frac{\inf_{\left\vert x-\mu \right\vert \leq \varepsilon /2}p\left(
x\right) }{\min \left\{ F\left( \mu -\varepsilon /2\right) ,1-F\left( \mu
+\varepsilon /2\right) \right\} }>0\text{ .}  \label{ineq_around_mu}
\end{equation}%
The conclusion now follows from combining (\ref{ineq_far}) and (\ref%
{ineq_around_mu}).
\end{proof}

\section{Concentration inequalities\label{section_concentration}}

We state in this section some concentration inequalities related to Stein
kernels in dimension one. Due to the use of covariance identities stated in
Section \ref{section_cov_identity}, the proofs indeed heavily rely on
dimension one. We can however notice that it is known from \cite%
{LedoucNourdinPeccati:15} that a uniformly bounded (multi-dimensional) Stein
kernel ensures a sub-Gaussian concentration rate. We derive sharp
sub-Gaussian inequalities in Theorem \ref{theorem_Mills_type_Gauss}. It is
also reasonable to think that results such as in Theorem \ref%
{theorem_concentration_gene} could be generalized to higher dimension, but
this seems rather nontrivial and is left as an open question.

From Proposition \ref{prop_cov_id}, Section \ref{section_cov_identity}, we
get the following proposition.

\begin{proposition}
\label{theorem_cov_ineq}Assume that $\nu $ has a a finite first moment and a
density $p$ with respect to the Lebesgue measure that has a connected
support. If $g\in L_{\infty }\left( \nu \right) $ and $h\in L_{1}\left( \nu
\right) $ are absolutely continuous and $h$ is $1$-Lipschitz, then%
\begin{equation}
\left\vert 
\cov%
\left( g,h\right) \right\vert \leq \mathbb{E}\left[ \left\vert g^{\prime
}\right\vert \cdot \tau _{v}\right] \text{ ,}  \label{cov_ineq_stein}
\end{equation}%
where $\tau _{v}$ is given in (\ref{kernel_formula}) and is the Stein
kernel. Furthermore, if the Stein kernel is uniformly bounded, that is $\tau
_{v}\in L_{\infty }\left( \nu \right) $, then%
\begin{equation}
\left\vert 
\cov%
\left( g,h\right) \right\vert \leq \left\Vert \tau _{v}\right\Vert _{\infty }%
\mathbb{E}\left[ \left\vert g^{\prime }\right\vert \right] \text{ .}
\label{cov_ineq_bounded_stein}
\end{equation}
\end{proposition}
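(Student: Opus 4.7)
The plan is to reduce the inequality to a pointwise bound on $\bar{\mathcal{L}}h$ in terms of the Stein kernel, and then integrate. Concretely, I would start by applying the covariance identity (\ref{cov_id_connected}) from Proposition \ref{prop_cov_id}, which under our hypotheses (connected support, finite first moment, $g$ absolutely continuous in $L_{\infty}(\nu)$ and $h$ absolutely continuous in $L_{1}(\nu)$) yields
\begin{equation*}
\cov(g,h) = \mathbb{E}\bigl[g'(X)\,\bar{\mathcal{L}}h(X)\bigr].
\end{equation*}
Taking absolute values inside the expectation, it suffices to prove the pointwise estimate $|\bar{\mathcal{L}}h(x)| \leq \tau_{\nu}(x)$ for $x \in I(\nu)$ whenever $h$ is $1$-Lipschitz.

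For this pointwise bound I would invoke the kernel representation (\ref{identity_stein_factor}):
\begin{equation*}
\bar{\mathcal{L}}h(x) \;=\; \frac{1}{p(x)}\int_{\mathbb{R}} k_{\nu}(x,y)\,h'(y)\,dy.
\end{equation*}
Since $k_{\nu}(x,y) \geq 0$ by (\ref{def_kernel}) and the Lipschitz assumption gives $|h'(y)| \leq 1$ for Lebesgue-a.e.\ $y$, the triangle inequality produces
\begin{equation*}
|\bar{\mathcal{L}}h(x)| \;\leq\; \frac{1}{p(x)}\int_{\mathbb{R}} k_{\nu}(x,y)\,dy \;=\; \tau_{\nu}(x),
\end{equation*}
where the last equality is precisely formula (\ref{formula_kernel}). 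Inserting this bound into the covariance identity gives $|\cov(g,h)| \leq \mathbb{E}[|g'(X)|\,\tau_{\nu}(X)]$, which is (\ref{cov_ineq_stein}). The second assertion (\ref{cov_ineq_bounded_stein}) is then immediate by factoring out $\|\tau_{\nu}\|_{\infty}$.

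There is essentially no hard step here; the whole proof is a direct consequence of the representation (\ref{identity_stein_factor}) combined with the kernel formula (\ref{formula_kernel}). The only point that deserves a line of justification is the applicability of (\ref{cov_id_connected}): Proposition \ref{prop_cov_id} requires $g' \mathcal{L}h$ to be Lebesgue integrable, and this follows because $|h'| \leq 1$ implies $|\mathcal{L}h| \leq \tau_{\nu} p$, so $|g'\mathcal{L}h| \leq \|g\|_{L_{\infty}(\nu)}$-type control combined with the integrability of $\tau_{\nu} p$ (which is inherited from the finite first moment of $\nu$) is enough. No approximation argument is needed beyond what is already contained in Proposition \ref{prop_cov_id}.
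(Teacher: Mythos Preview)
Your proof is correct and essentially identical to the paper's: both start from the covariance identity (\ref{cov_id_connected}), use the kernel representation (\ref{identity_stein_factor}) together with $k_\nu\ge 0$ and $|h'|\le 1$ to bound $|\bar{\mathcal L}h|\le \tau_\nu$ via (\ref{formula_kernel}), and then integrate. The only small slip is in your integrability remark---``$\|g\|_{L_\infty(\nu)}$-type control'' does not bound $g'$; the clean argument is simply that $|g'\mathcal L h|\le |g'|\tau_\nu p$, whose integral is $\mathbb E[|g'|\tau_\nu]$, which may be assumed finite since otherwise (\ref{cov_ineq_stein}) is vacuous.
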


\begin{proof}
Start from identity (\ref{cov_id_connected}) and simply remark that, for $%
h_{\nu }\left( x\right) =x$,%
\begin{eqnarray*}
\left\vert \mathcal{\mathcal{\bar{L}}}h\left( x\right) \right\vert
&=&\left\vert \frac{1}{p\left( x\right) }\int_{\mathbb{R}}k_{\nu }\left(
x,y\right) h^{\prime }\left( y\right) dy\right\vert \\
&\leq &\frac{\left\Vert h^{\prime }\right\Vert _{\infty }}{p\left( x\right) }%
\int_{\mathbb{R}}k_{\nu }\left( x,y\right) dy \\
&=&\left\Vert h^{\prime }\right\Vert _{\infty }\tau _{v}\left( x\right) 
\text{ .}
\end{eqnarray*}
\end{proof}

Applying techniques similar to those developed in \cite{MR1836739} for
Gaussian vectors (see especially Theorem 2.2), we have the following
Gaussian type concentration inequalities when the Stein kernel is uniformly
bounded.

\begin{theorem}
\label{theorem_Mills_type_Gauss}Assume that $\nu $ has a finite first moment
and a density $p$ with respect to the Lebesgue measure that has a connected
support. Assume also that the Stein kernel $\tau _{v}$ is uniformly bounded, 
$\tau _{v}\in L_{\infty }\left( \nu \right) $, and denote $c=\left\Vert \tau
_{v}\right\Vert _{\infty }^{-1}$. Then the following concentration
inequalities hold. For any $1$-Lipschitz function $g$,%
\begin{equation}
\mathbb{P}\left( g\geq \int gd\nu +r\right) \leq e^{-cr^{2}/2}\text{ .}
\label{Gaussian_type_concen}
\end{equation}%
Furthermore, the function%
\begin{equation*}
T_{g}\left( r\right) =e^{cr^{2}/2}\mathbb{E}\left( g-\mathbb{E}g\right) 
\mathbf{1}_{\left\{ g-\mathbb{E}g\geq r\right\} }
\end{equation*}%
is non-increasing in $r\geq 0$. In particular, for all $r>0$,%
\begin{eqnarray}
\mathbb{P}\left( g-\mathbb{E}g\geq r\right) &\leq &\mathbb{E}\left( g-%
\mathbb{E}g\right) _{+}\frac{e^{-cr^{2}/2}}{r}\text{ ,}
\label{concen_ref_gauss} \\
\mathbb{P}\left( \left\vert g-\mathbb{E}g\right\vert \geq r\right) &\leq &%
\mathbb{E}\left\vert g-\mathbb{E}g\right\vert \frac{e^{-cr^{2}/2}}{r}\text{ .%
}  \label{concen_ref_Gauss_abs}
\end{eqnarray}
\end{theorem}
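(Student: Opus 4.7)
The plan is to prove the three parts of the theorem in sequence, with the monotonicity of $T_{g}$ as the main technical step.

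For the Gaussian tail bound (\ref{Gaussian_type_concen}), I use the Cram\'er--Chernoff method. Let $L(\lambda) = \mathbb{E}[\exp(\lambda(g - \mathbb{E}g))]$ and apply the covariance bound (\ref{cov_ineq_bounded_stein}) of Proposition \ref{theorem_cov_ineq} with the test function $\phi = e^{\lambda g}$ (in the first slot) and the $1$-Lipschitz function $g$ itself (in the second slot). Since $|\phi'| = \lambda|g'| e^{\lambda g} \leq \lambda e^{\lambda g}$, this gives $|\cov(e^{\lambda g}, g)| \leq c^{-1} \lambda \mathbb{E}[e^{\lambda g}]$, which translates to $L'(\lambda) \leq c^{-1} \lambda L(\lambda)$. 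Integrating from $\lambda = 0$ (with $L(0) = 1$) yields $L(\lambda) \leq e^{\lambda^2/(2c)}$, and optimizing $\mathbb{P}(g - \mathbb{E}g \geq r) \leq e^{-\lambda r} L(\lambda)$ at $\lambda = cr$ gives (\ref{Gaussian_type_concen}). A standard truncation of $g$ handles the possibility that $e^{\lambda g} \notin L_{\infty}(\nu)$.

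The heart of the proof is the monotonicity of $T_{g}$. Write $f = g - \mathbb{E}g$, $u(r) = \mathbb{E}[f \mathbf{1}_{\{f \geq r\}}]$, and $V(r) = \mathbb{P}(f \geq r)$, so that $T_{g}(r) = e^{cr^2/2} u(r)$. For $0 < r_1 < r_2$ I apply (\ref{cov_ineq_bounded_stein}) to the test function $\phi(x) = \psi(g(x) - \mathbb{E}g)$ with $\psi(t) = (t - r_1)_+ - (t - r_2)_+$; this $\phi$ is bounded in $[0, r_2 - r_1]$, absolutely continuous, and satisfies $|\phi'| \leq 1$ as a composition of $1$-Lipschitz maps. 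Via Fubini and the telescoping identity $\psi(t) = \int_{r_1}^{r_2} \mathbf{1}_{\{t > s\}}\, ds$, the covariance evaluates to $\cov(\phi(g), g) = \mathbb{E}[\psi(f) f] = \int_{r_1}^{r_2} u(s)\, ds$, while the right-hand side of (\ref{cov_ineq_bounded_stein}) is at most $c^{-1}[V(r_1) - V(r_2)]$ (up to an inessential atom correction). Since $f \geq r_1$ on $\{r_1 \leq f < r_2\}$, an elementary Markov-type bound gives $V(r_1) - V(r_2) \leq r_1^{-1}[u(r_1) - u(r_2)]$, which combined with the monotonicity of $u$ produces
\begin{equation*}
u(r_1) \geq u(r_2)\bigl[1 + c\, r_1 (r_2 - r_1)\bigr].
\end{equation*}
Iterating this estimate over a partition $r_1 = s_0 < s_1 < \dots < s_n = r_2$ and letting the mesh size tend to zero (the sum $\sum_k \log[1 + c\, s_{k-1}(s_k - s_{k-1})]$ being a Riemann sum converging to $\int_{r_1}^{r_2} c\, s\, ds = c(r_2^2 - r_1^2)/2$) gives $T_{g}(r_2) \leq T_{g}(r_1)$. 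The endpoint $r_1 = 0$ follows from right-continuity of $T_{g}$ at $0$, via dominated convergence and $\mathbb{E}|f| < \infty$.

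The refined bound (\ref{concen_ref_gauss}) then follows from $T_{g}(r) \leq T_{g}(0) = \mathbb{E}(g - \mathbb{E}g)_+$ combined with Markov's inequality $r V(r) \leq u(r)$, which gives $V(r) \leq \mathbb{E}(g - \mathbb{E}g)_+ \, e^{-cr^2/2}/r$. For the two-sided bound (\ref{concen_ref_Gauss_abs}), the same argument applied to $-g$ (still $1$-Lipschitz, and the Stein kernel depends only on $\nu$, so its $\|\cdot\|_\infty$ bound is unchanged) yields the analogous bound on $\mathbb{P}(-f \geq r)$; summing the two one-sided bounds produces the claim. The main technical subtlety is the passage from the finite-difference inequality to continuous monotonicity, for which the iteration plus Riemann-sum argument circumvents any density assumption on $f = g(X) - \mathbb{E}g$.
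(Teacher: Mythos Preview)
Your proof is correct. The first part (the Cram\'er--Chernoff argument for \eqref{Gaussian_type_concen}) is identical to the paper's.

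For the monotonicity of $T_g$, you take a genuinely different route. The paper \emph{assumes} that $g(X)$ has a continuous positive density on $\mathbb{R}$, applies the covariance inequality with $U(x) = \min\{(x-r)_+,\varepsilon\}$, divides by $\varepsilon$ and lets $\varepsilon \to 0$ to obtain the differential inequality $(\log V)'(r) \leq -cr$ for $V(r) = \int_r^\infty x\,dG(x)$, from which monotonicity of $T_g$ is immediate. You instead keep $\varepsilon = r_2 - r_1$ finite, extract the multiplicative inequality $u(r_1) \geq u(r_2)[1 + c\,r_1(r_2 - r_1)]$, and pass to the differential statement via iteration over a partition and a Riemann-sum limit. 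This avoids the density assumption on $g(X)$ entirely, which the paper introduces in its proof but not in the theorem statement; your argument is therefore slightly more general (or at least more self-contained), at the cost of a more hands-on limiting procedure. Both approaches rest on the same covariance inequality and the same test function; the difference lies purely in how the passage from finite increments to the continuous monotonicity is organised.
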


Inequality (\ref{Gaussian_type_concen}) is closely related to Chatterjee's
Gaussian coupling for random variables with bounded Stein kernel \cite%
{MR2875758}. To our knowledge refined concentration inequalities such as (%
\ref{concen_ref_gauss}) and (\ref{concen_ref_Gauss_abs}) are only available
in the literature for Gaussian random variables or by extension, for
strongly log-concave measures. Indeed, these inequalities can be established
for strongly log-concave measures as an immediate consequence of the
Caffarelli contraction theorem, which states that such measures can be
realized as the pushforward of the Gaussian measure by a Lipschitz function.
We refer to these inequalities as generalized Mills' type inequalities since
taking $g=Id$ in Inequality (\ref{concen_ref_Gauss_abs}) allows to recover
Mills' inequality (see for instance \cite{duembgen2010bounding}): if $Z$ is
the normal distribution, then for any $t>0$, 
\begin{equation*}
\mathbb{P}\left( \left\vert Z\right\vert >t\right) \leq \sqrt{\frac{2}{\pi }}%
\frac{e^{-t^{2}/2}}{t}\text{ .}
\end{equation*}%
Here the setting of a bounded Stein kernel is much larger and include for
instance smooth perturbations of the normal distribution that are Lipschitz
and bounded away from zero (see Remark 2.9 in \cite{LedoucNourdinPeccati:15}%
) or bounded perturbations of measures having a bounded Stein kernel and a
density bounded away from zero around its mean (see Section \ref%
{section_isoperimetric_constant}\ above for more details).

Note that Beta distributions $B_{\alpha ,\beta }$ as defined in (\ref%
{def_beta}) are known to be log-concave of order $\alpha $ whenever $\alpha
\geq 1$ and $\beta \geq 1$, \cite{BobkovLedoux:14}. Using this fact, Bobkov
and Ledoux \cite{BobkovLedoux:14}, Proposition B.10, prove the following
concentration inequality: for $X$ a random variable with distribution $%
B\left( \alpha ,\beta \right) ,$ $\alpha \geq 1,$ $\beta \geq 1$ and any $%
r\geq 0$,%
\begin{equation*}
\mathbb{P}\left( \left\vert X-\mathbb{E}\left( X\right) \right\vert \geq
r\right) \leq 2e^{-\left( \alpha +\beta \right) r^{2}/8}\text{ .}
\end{equation*}

Actually, for any $\alpha ,\beta >0$, the Beta distribution $B\left( \alpha
,\beta \right) $ belongs to the Pearson's class of distributions and its
Stein kernel is given a polynomial of order 2, $\tau _{B\left( \alpha ,\beta
\right) }\left( x\right) =\left( \alpha +\beta \right) ^{-1}x\left(
1-x\right) $ on $\left[ 0,1\right] $ (see \cite{MR3595350}). In particular, $%
\left\Vert \tau _{B\left( \alpha ,\beta \right) }\right\Vert _{\infty
}=4^{-1}\left( \alpha +\beta \right) ^{-1}$ and Theorem \ref%
{theorem_Mills_type_Gauss} applies even in the case where $\alpha ,\beta <1$%
, for which the $B\left( \alpha ,\beta \right) $ distribution is not
log-concave.

\begin{corollary}
\label{corollary_beta}Let $\alpha ,\beta >0$. Take $X$ a random variable
with distribution $B\left( \alpha ,\beta \right) $ and $g$ a $1$-Lipschitz
function on $\left[ 0,1\right] $. Then for all $r>0$,%
\begin{equation*}
\mathbb{P}\left( g\left( X\right) -\mathbb{E}\left[ g\left( X\right) \right]
\geq r\right) \leq \exp \left( -2\left( \alpha +\beta \right) r^{2}\right) 
\text{ .}
\end{equation*}%
Furthermore, for all $r>0$,%
\begin{eqnarray}
\mathbb{P}\left( g\left( X\right) -\mathbb{E}\left[ g\left( X\right) \right]
\geq r\right) &\leq &\mathbb{E}\left( g-\mathbb{E}g\right) _{+}\frac{%
e^{-2\left( \alpha +\beta \right) r^{2}}}{r}\text{ ,}  \notag \\
\mathbb{P}\left( \left\vert g\left( X\right) -\mathbb{E}\left[ g\left(
X\right) \right] \right\vert \geq r\right) &\leq &\mathbb{E}\left\vert g-%
\mathbb{E}g\right\vert \frac{e^{-2\left( \alpha +\beta \right) r^{2}}}{r}
\label{ineq_abs_Mills_beta}
\end{eqnarray}%
and, if $\alpha ,\beta \geq 1$,%
\begin{equation}
\mathbb{P}\left( \left\vert g\left( X\right) -\mathbb{E}\left[ g\left(
X\right) \right] \right\vert \geq r\right) \leq \frac{C}{\sqrt{\alpha +\beta
+1}}\frac{e^{-2\left( \alpha +\beta \right) r^{2}}}{r}\text{ }
\label{ineq_Mills_type_beta}
\end{equation}%
with the value $C=2.5$ - for which we always have $C\left( \alpha +\beta
+1\right) ^{-1/2}<2$ - that holds.
\end{corollary}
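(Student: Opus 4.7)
The strategy is to specialize Theorem \ref{theorem_Mills_type_Gauss} to the Beta distribution after evaluating its Stein kernel at the worst point, then to control the prefactor $\mathbb{E}|g(X)-\mathbb{E}g(X)|$ in the sharpest Mills-type bound via the weighted Poincar\'e inequality of Theorem \ref{theorem_Poincare_kernel}.

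First, I would recall from the paragraph preceding the corollary (cf.\ \cite{MR3595350}) that the Stein kernel of $B(\alpha,\beta)$ is $\tau_{B(\alpha,\beta)}(x) = x(1-x)/(\alpha+\beta)$ on $[0,1]$; this is a quadratic maximized at $x=1/2$, so $\|\tau_{B(\alpha,\beta)}\|_\infty = 1/(4(\alpha+\beta))$ and $c := \|\tau_{B(\alpha,\beta)}\|_\infty^{-1} = 4(\alpha+\beta)$. Since $B(\alpha,\beta)$ has finite first moment and a density with connected support $[0,1]$, Theorem \ref{theorem_Mills_type_Gauss} applies with exponent $cr^2/2 = 2(\alpha+\beta)r^2$, and the first three displayed inequalities of the corollary (the sub-Gaussian tail, the one-sided Mills bound, and the two-sided Mills bound \ref{ineq_abs_Mills_beta}) are immediate read-offs of (\ref{Gaussian_type_concen}), (\ref{concen_ref_gauss}) and (\ref{concen_ref_Gauss_abs}), respectively.

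The only remaining work is to upgrade (\ref{ineq_abs_Mills_beta}) to (\ref{ineq_Mills_type_beta}) by bounding the prefactor $\mathbb{E}|g(X)-\mathbb{E}g(X)|$. By Jensen's inequality, $\mathbb{E}|g(X)-\mathbb{E}g(X)| \leq \sqrt{\var(g(X))}$. Applying Theorem \ref{theorem_Poincare_kernel} with $f=g$ and using the Lipschitz assumption $(g')^{2}\leq 1$ a.e.\ gives
\begin{equation*}
\var(g(X)) \leq \mathbb{E}\!\left[\tau_{B(\alpha,\beta)}(X)\right] = \var(X),
\end{equation*}
where the last equality follows by inserting $\varphi(x)=x-\mu$ into the Stein kernel identity (\ref{stein_kernel_equation}). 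Substituting the explicit Beta variance $\var(X)=\alpha\beta/[(\alpha+\beta)^2(\alpha+\beta+1)]$ and applying AM--GM in the form $\alpha\beta\leq (\alpha+\beta)^2/4$ yields $\mathbb{E}|g(X)-\mathbb{E}g(X)| \leq 1/(2\sqrt{\alpha+\beta+1})$, which is in fact sharper than the stated $2.5/\sqrt{\alpha+\beta+1}$. The parenthetical bound $C(\alpha+\beta+1)^{-1/2}<2$ with $C=2.5$ then follows from $\alpha,\beta\geq 1$, since $\alpha+\beta+1 \geq 3$ and $2.5/\sqrt{3}<2$.

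There is no serious obstacle: the whole argument is an instantiation of Theorem \ref{theorem_Mills_type_Gauss} using the closed form of $\tau_{B(\alpha,\beta)}$, plus a one-line Jensen--plus--weighted-Poincar\'e estimate for the prefactor. The hypothesis $\alpha,\beta\geq 1$ is used only for the numerical comparison $C/\sqrt{\alpha+\beta+1}<2$; the preceding bounds (and in particular the application of Theorem \ref{theorem_Mills_type_Gauss}) are valid for all $\alpha,\beta>0$, which is precisely why the corollary improves on \cite{BobkovLedoux:14} in the non-log-concave regime $\alpha,\beta<1$.
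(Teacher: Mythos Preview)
Your argument is correct. For the first three inequalities you proceed exactly as the paper intends: they are direct specializations of Theorem~\ref{theorem_Mills_type_Gauss} once $\|\tau_{B(\alpha,\beta)}\|_\infty = 1/(4(\alpha+\beta))$ is computed.

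For the last inequality (\ref{ineq_Mills_type_beta}), however, you take a genuinely different and in fact cleaner route than the paper. The paper bounds $\mathbb{E}|g-\mathbb{E}g|$ by passing through the median $m$ of $g(X)$ and invoking a weighted Cheeger-type inequality of Bobkov--Ledoux (their Proposition~B.7), which gives
\[
\mathbb{E}|g-m|\leq \frac{2.5}{\sqrt{\alpha+\beta+1}}\int_0^1 \sqrt{x(1-x)}\,|g'(x)|\,dB_{\alpha,\beta}(x),
\]
followed by the Beta-function estimate $B(\alpha+1/2,\beta+1/2)\leq B(\alpha,\beta)/2$ and the generic bound $\mathbb{E}|g-\mathbb{E}g|\leq 2\,\mathbb{E}|g-m|$. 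This is where the constant $2.5$ originates, and the Bobkov--Ledoux input is stated for $\alpha,\beta\geq 1$. Your approach instead stays entirely within the paper: Jensen plus the weighted Poincar\'e inequality (Theorem~\ref{theorem_Poincare_kernel}) give $\mathbb{E}|g-\mathbb{E}g|\leq \sqrt{\var(X)}$, and the exact Beta variance together with AM--GM yields the sharper constant $C=1/2$ in place of $2.5$. Your argument is more self-contained, produces a strictly better constant, and---as you observe---does not actually rely on $\alpha,\beta\geq 1$ except for the parenthetical numerical comparison (which with your constant becomes trivially true for all $\alpha,\beta>0$).
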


\begin{proof}
We only need to prove (\ref{ineq_Mills_type_beta}). Start from (\ref%
{ineq_abs_Mills_beta}). It is sufficient to prove the following inequality,%
\begin{equation*}
\mathbb{E}\left\vert g-\mathbb{E}g\right\vert \leq \frac{2.5}{\sqrt{\alpha
+\beta +1}}\text{ .}
\end{equation*}%
By proposition B.7 of \cite{MR3595350}, by setting $m$ the median of $%
g\left( X\right) $, we have%
\begin{eqnarray*}
\mathbb{E}\left\vert g-m\right\vert &\leq &\frac{2.5}{\sqrt{\alpha +\beta +1}%
}\int_{0}^{1}\sqrt{x\left( 1-x\right) }\left\vert g^{\prime }\left( x\right)
\right\vert dB_{\alpha ,\beta }\left( x\right) \\
&\leq &\frac{2.5}{\sqrt{\alpha +\beta +1}}\int_{0}^{1}\sqrt{x\left(
1-x\right) }dB_{\alpha ,\beta }\left( x\right) \\
&=&\frac{2.5B\left( \alpha +1/2,\beta +1/2\right) }{\sqrt{\alpha +\beta +1}%
B\left( \alpha ,\beta \right) }\text{ .}
\end{eqnarray*}%
Now the conclusion follows from the basic inequalities, $B\left( \alpha
+1/2,\beta +1/2\right) \leq B\left( \alpha ,\beta \right) /2$ and $\mathbb{E}%
\left\vert g-\mathbb{E}g\right\vert \leq 2\mathbb{E}\left\vert
g-m\right\vert $.
\end{proof}

\begin{proof}[Proof of Theorem \protect\ref{theorem_Mills_type_Gauss}]
Take $g$ to be $1$-Lipschitz and mean zero with respect to $\nu $, then for
any $\lambda \geq 0$,%
\begin{eqnarray*}
\mathbb{E}\left[ ge^{\lambda g}\right] &=&%
\cov%
\left( g,e^{\lambda g}\right) \\
&\leq &\left\Vert \tau _{v}\right\Vert _{\infty }\mathbb{E}\left[ \left\vert
\left( e^{\lambda g}\right) ^{\prime }\right\vert \right] \\
&\leq &\frac{\lambda }{c}\mathbb{E}\left[ e^{\lambda g}\right] \text{ .}
\end{eqnarray*}%
Define $J\left( \lambda \right) =\log \mathbb{E}\left[ e^{\lambda g}\right] $%
, $\lambda \geq 0$. We thus have the following differential inequality, $%
J^{\prime }\left( \lambda \right) \leq \lambda /c$. Since $J\left( 0\right)
=0$, this implies that $J\left( \lambda \right) \leq \lambda ^{2}/\left(
2c\right) $. Equivalently, $\mathbb{E}\left[ e^{\lambda g}\right] \leq
e^{\lambda ^{2}/\left( 2c\right) }$, which by the use of Chebyshev's
inequality gives (\ref{Gaussian_type_concen}).

Now, assume that as a random variable $g$ has a continuous positive density $%
p$ on the whole real line. Take $f=U\left( g\right) $ where $U$ is a
non-decreasing (piecewise) differentiable function on $\mathbb{R}$. Applying
(\ref{cov_ineq_bounded_stein}), we get%
\begin{equation}
\mathbb{E}\left[ gU\left( g\right) \right] \leq \mathbb{E}\left[ U^{\prime
}\left( g\right) \right] /c\text{ .}  \label{cov_c_U}
\end{equation}%
Let $G$ be the distribution function of $g$. Given $r>0$ and $\varepsilon >0$%
, applying (\ref{cov_c_U}) to the function $U\left( x\right) =\min \left\{
\left( x-r\right) ^{+},\varepsilon \right\} $ leads to%
\begin{equation*}
\int_{r}^{r+\varepsilon }x\left( x-h\right) dG\left( x\right) +\varepsilon
\int_{r+\varepsilon }^{+\infty }xdG\left( x\right) \leq \frac{G\left(
r+\varepsilon \right) -G\left( r\right) }{c}\text{ .}
\end{equation*}%
Dividing by $\varepsilon $ and letting $\varepsilon $ tend to $0$, we
obtain, for all $h>0$,%
\begin{equation*}
\int_{r}^{\infty }xdG\left( x\right) \leq \frac{p\left( r\right) }{c}\text{ .%
}
\end{equation*}%
Thus, the function $V\left( r\right) =\int_{r}^{+\infty }xdG\left( x\right)
=\int_{r}^{+\infty }xp\left( x\right) dx$ satisfies the differential
inequality $V\left( r\right) \leq -V^{\prime }\left( r\right) /(cr)$, that is%
\begin{equation*}
\left( \log V\left( r\right) \right) ^{\prime }\leq -cr\text{ },
\end{equation*}%
which is equivalent to saying that $\log V\left( r\right) +cr^{2}/2$ is
non-increasing, and therefore the function $T_{g}\left( r\right) $ is
non-increasing.
\end{proof}

We relax now the condition on Stein kernels, by assuming that it is
"sub-linear". This condition is fulfilled by many important distributions,
for instance by the Gaussian, Gamma or Beta distributions. We deduce a
sub-Gamma behavior.

\begin{theorem}
\label{theorem_concentration_gamma}Assume that $\nu $ has a finite first
moment and a density $p$ with respect to the Lebesgue measure that has a
connected support. Assume also that the Stein kernel $\tau _{v}$ is
sub-linear, that is $\tau _{v}(x)\leq a\left\vert x-\mu \right\vert +b$,
where $\mu $ is the mean value of $\nu $. Then for any $1$-Lipschitz
function $g$ and any $r>0$,%
\begin{equation}
\mathbb{P}\left( g\geq \int gd\nu +r\right) \leq e^{-\frac{r^{2}}{2ar+2b}}%
\text{ .}  \label{ineq_sub_Gamma}
\end{equation}
\end{theorem}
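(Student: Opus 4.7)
Replacing $g$ by $g - \mathbb{E}[g(X)]$, we may assume $\mathbb{E}[g(X)]=0$. Set $F(\lambda) := \mathbb{E}[e^{\lambda g(X)}]$ and $J(\lambda) := \log F(\lambda)$ for $\lambda \geq 0$. The target (\ref{ineq_sub_Gamma}) follows by Chernoff's inequality minimised over $\lambda$ from the sub-Gamma Laplace bound
\[ J(\lambda) \;\leq\; \frac{b\,\lambda^{2}}{2(1-a\lambda)}, \qquad \lambda \in [0, 1/a), \]
since minimising $-\lambda r + b\lambda^{2}/(2(1-a\lambda))$ in $\lambda$ produces exactly the exponent $-r^{2}/(2ar+2b)$. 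The scheme is to turn the covariance bound (\ref{cov_ineq_stein}) of Proposition \ref{theorem_cov_ineq} into a Herbst-type differential inequality for $J$, mimicking the proof of Theorem \ref{theorem_Mills_type_Gauss} but now keeping $\tau_{v}(X)$ inside the expectation since it is no longer uniformly bounded.

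After a standard truncation that places $e^{\lambda g}$ inside $L_\infty(\nu)$, I apply (\ref{cov_ineq_stein}) with $g$ in the role of the 1-Lipschitz function $h$ and $e^{\lambda g}$ in that of $g$, which yields
\[ F'(\lambda) \;=\; \cov(g,e^{\lambda g}) \;\leq\; \mathbb{E}\bigl[|(e^{\lambda g})'|\,\tau_{v}(X)\bigr] \;\leq\; \lambda\,\mathbb{E}\!\left[\tau_{v}(X)\,e^{\lambda g(X)}\right], \]
using $|g'|\leq 1$ a.e. Plugging the sub-linearity $\tau_{v}(X)\leq a|X-\mu|+b$ splits this into $F'(\lambda) \leq a\lambda\,\mathbb{E}[|X-\mu|\,e^{\lambda g}] + b\lambda\,F(\lambda)$, reducing the problem to controlling the cross-term $\mathbb{E}[|X-\mu|\,e^{\lambda g}]$.

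The key observation is that $x\mapsto|x-\mu|$ is itself 1-Lipschitz, so a second application of (\ref{cov_ineq_stein}) gives $|\cov(e^{\lambda g},|X-\mu|)| \leq \lambda\,\mathbb{E}[\tau_{v}e^{\lambda g}]$, whence $\mathbb{E}[|X-\mu|e^{\lambda g}] \leq \mathbb{E}|X-\mu|\cdot F(\lambda) + \lambda\,\mathbb{E}[\tau_{v}e^{\lambda g}]$. Writing $G(\lambda) := \mathbb{E}[\tau_{v}(X)e^{\lambda g(X)}]$, the two preceding inequalities close into a self-consistent linear estimate $(1-a\lambda)\,G(\lambda) \leq (a\,\mathbb{E}|X-\mu|+b)\,F(\lambda)$ on $[0,1/a)$, and hence
\[ J'(\lambda) \;\leq\; \frac{\lambda\,(a\,\mathbb{E}|X-\mu|+b)}{1-a\lambda}. \]
Integrating with the elementary bound $-a\lambda-\log(1-a\lambda) \leq a^{2}\lambda^{2}/(2(1-a\lambda))$ produces a sub-Gamma shape for $J$, and Chernoff's inequality concludes.

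The delicate point is matching exactly the constant $b$ of the statement instead of the loose $a\,\mathbb{E}|X-\mu|+b$ delivered above. I would close this gap either by invoking the generic Laplace-transform bound Lemma \ref{Lemma_upper_Laplace} in a form tailored to sub-linear pivots, or by refining the estimate of $\mathbb{E}[|X-\mu|e^{\lambda g}]$ through a split of the integration at $\mu$ together with Stein's identity (\ref{stein_kernel_equation}) applied to the sign-adapted test functions $\pm\mathbf{1}_{x \gtrless \mu}\,e^{\lambda g(x)}$; this replaces the product $\mathbb{E}|X-\mu|\cdot F(\lambda)$ by $2\tau_{v}(\mu)p(\mu)\,e^{\lambda g(\mu)} = \mathbb{E}|X-\mu|\cdot e^{\lambda g(\mu)}$, and together with the Jensen lower bound $F(\lambda)\geq 1$ and a suitable optimisation in $\lambda$ this should absorb the surplus into the sub-Gamma Legendre transform.
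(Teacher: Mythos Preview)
Your overall strategy matches the paper's: center $g$, set $G(\lambda)=\mathbb{E}[\tau_{\nu}e^{\lambda g}]$, obtain $F'(\lambda)\le\lambda G(\lambda)$ from (\ref{cov_ineq_stein}), and then close a self-consistent bound on $G$ using the sub-linearity assumption. The divergence, and the gap you yourself flag, lies entirely in the treatment of the cross-term $\mathbb{E}[|X-\mu|\,e^{\lambda g}]$.

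The paper does not use a second covariance inequality. It writes $\mathbb{E}[|X-\mu|\,e^{\lambda g(X)}]=\mathbb{E}[(X-\mu)h(X)]$ with $h(x)=\mathrm{sign}(x-\mu)\,e^{\lambda g(x)}$ and applies the Stein identity (\ref{stein_kernel_equation}) directly, taking $h'(x)=\mathrm{sign}(x-\mu)\,\lambda g'(x)\,e^{\lambda g(x)}$ almost everywhere. This gives
\[
\mathbb{E}\bigl[|X-\mu|\,e^{\lambda g}\bigr]=\mathbb{E}\bigl[\tau_\nu(X)\,h'(X)\bigr]\le \lambda\,G(\lambda)
\]
with no additive remainder, whence $(1-a\lambda)\,G(\lambda)\le b\,F(\lambda)$ and $J'(\lambda)\le \lambda b/(1-a\lambda)$ on $[0,1/a)$. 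Integrating (via the same elementary bound you quote) yields $J(\lambda)\le b\lambda^{2}/(2(1-a\lambda))$, and Chernoff concludes with the exact constant $b$.

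This is precisely your ``second fix'': the sign-adapted test function is exactly the paper's $h$. Where you stall is in retaining the boundary contribution $\mathbb{E}|X-\mu|\cdot e^{\lambda g(\mu)}$ and then hoping to absorb it afterwards by optimisation or by Lemma~\ref{Lemma_upper_Laplace}; the paper instead treats $h$ as an admissible test function in (\ref{stein_kernel_equation}) with a.e.\ derivative and carries no such term. Your first route --- applying (\ref{cov_ineq_stein}) to the $1$-Lipschitz function $|x-\mu|$ --- is sound but structurally cannot avoid the centring term $\mathbb{E}|X-\mu|\cdot F(\lambda)$, so it will only ever deliver the exponent with $b$ replaced by $a\,\mathbb{E}|X-\mu|+b$.
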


When $g=Id$, inequality (\ref{ineq_sub_Gamma}) was proved by Nourdin and
Viens \cite{MR2556018} under the stronger condition that $\tau _{v}(x)\leq
a\left( x-\mu \right) +b$ (which induces that the support of $\nu $ is
bounded from below if $a>0$).

\begin{proof}
Take $g$ to be $1$-Lipschitz and mean zero with respect to $\nu $. Whithout
loss of generality, we may assume that $g$ is bounded (otherwise we
approximate $g$ by thresholding its largest values). Then for any $\lambda
\geq 0$,%
\begin{eqnarray}
\mathbb{E}\left[ ge^{\lambda g}\right] &=&%
\cov%
\left( g,e^{\lambda g}\right)  \notag \\
&\leq &\mathbb{E}\left[ \left\vert \left( e^{\lambda g}\right) ^{\prime
}\right\vert \tau _{v}\right]  \notag \\
&\leq &\lambda \mathbb{E}\left[ e^{\lambda g}\tau _{v}\right] \text{ .}
\label{ineq_1}
\end{eqnarray}%
Furthermore,%
\begin{equation}
\mathbb{E}\left[ e^{\lambda g}\tau _{v}\right] \leq a\mathbb{E}\left[
\left\vert X-\mu \right\vert e^{\lambda g\left( X\right) }\right] +b\mathbb{E%
}\left[ e^{\lambda g}\right]  \label{ineq_sublin_1}
\end{equation}%
and $\mathbb{E}\left[ \left\vert X-\mu \right\vert e^{\lambda g\left(
X\right) }\right] =\mathbb{E}\left[ \left( X-\mu \right) h\left( X\right) %
\right] $ where $h\left( x\right) =\mathrm{\mathrm{sign}}\left( x-\mu
\right) \exp \left( \lambda g\left( x\right) \right) $ and $\mathrm{\mathrm{%
sign}}\left( x-\mu \right) =2\cdot \mathbf{1}\left\{ x\geq \mu \right\} -1$.
As $h^{\prime }\left( x\right) =\mathrm{\mathrm{sign}}\left( x-\mu \right)
\lambda g^{\prime }\left( x\right) \exp \left( \lambda g\left( x\right)
\right) $ $a.s.$, we get%
\begin{equation*}
\mathbb{E}\left[ \left\vert X-\mu \right\vert e^{\lambda g\left( X\right) }%
\right] =\lambda \mathbb{E}\left[ \mathrm{\mathrm{sign}}\left( X-\mu \right)
g^{\prime }\left( X\right) e^{\lambda g\left( X\right) }\tau _{v}\left(
X\right) \right] \leq \lambda \mathbb{E}\left[ e^{\lambda g}\tau _{v}\right] 
\text{ ,}
\end{equation*}%
which gives, by combining with (\ref{ineq_sublin_1}),%
\begin{equation*}
\mathbb{E}\left[ e^{\lambda g}\tau _{v}\right] \leq \lambda a\mathbb{E}\left[
e^{\lambda g}\tau _{v}\right] +b\mathbb{E}\left[ e^{\lambda g}\right] \text{
.}
\end{equation*}%
If $\lambda <1/a$, this gives%
\begin{equation}
\mathbb{E}\left[ e^{\lambda g}\tau _{v}\right] \leq \frac{b}{1-\lambda a}%
\mathbb{E}\left[ e^{\lambda g}\right] \text{ .}  \label{ineq_2}
\end{equation}%
Combining (\ref{ineq_1}) and (\ref{ineq_2}), we obtain, for any $\lambda
<1/a $,%
\begin{equation*}
\mathbb{E}\left[ ge^{\lambda g}\right] \leq \frac{\lambda b}{1-\lambda a}%
\mathbb{E}\left[ e^{\lambda g}\right] \text{ .}
\end{equation*}%
Define $J\left( \lambda \right) =\log \mathbb{E}\left[ e^{\lambda g}\right] $%
, $\lambda \geq 0$. We thus have the following differential inequality, 
\begin{equation*}
J^{\prime }\left( \lambda \right) \leq \frac{\lambda b}{1-\lambda a}.
\end{equation*}%
Since $J\left( 0\right) =0$, this implies that $J\left( \lambda \right) \leq
\lambda ^{2}b/\left( 2(1-\lambda a)\right) $. Equivalently, $\mathbb{E}\left[
e^{\lambda g}\right] \leq e^{\lambda ^{2}b/\left( 2(1-\lambda a)\right) }$,
which by the use of Chebyshev's inequality gives (\ref{Gaussian_type_concen}%
).
\end{proof}

Actually, particularizing to the variable $X$ itself, we have the following
concentration bounds, in the spirit of the generalized Mills' type
inequalities obtained in Theorem \ref{theorem_Mills_type_Gauss}.

\begin{theorem}
\label{theorem_Mills_type_Gamma}Assume that $\nu $ has a finite first moment
and a density $p$ with respect to the Lebesgue measure that has a connected
support. Assume also that the Stein kernel $\tau _{v}$ is "sub-linear", that
is $\tau _{v}(x)\leq a\left\vert x-\mu \right\vert +b$ for some $a>0$ and $%
b\geq 0$, where $\mu $ is the mean value of $\nu $. Then the function%
\begin{equation*}
T\left( r\right) =\left( ar+b\right) ^{-b/a^{2}}e^{r/a}\mathbb{E}\left[
\left( X-\mu \right) \mathbf{1}_{\left\{ X-\mu \geq r\right\} }\right]
\end{equation*}%
is non-increasing in $r\geq 0$. In particular, for all $r>0$,%
\begin{eqnarray}
\mathbb{P}\left( X\geq \mu +r\right) &\leq &\mathbb{E}\left( X-\mu \right)
_{+}\frac{\left( ar+b\right) ^{b/a^{2}}e^{-r/a}}{r}\text{ ,}
\label{ineq_Mills_right_gamma} \\
\mathbb{P}\left( \left\vert X-\mu \right\vert \geq r\right) &\leq &\mathbb{E}%
\left\vert X-\mu \right\vert \frac{\left( ar+b\right) ^{b/a^{2}}e^{-r/a}}{r}%
\text{ .}  \label{ineq_Mills_left_gamma}
\end{eqnarray}
\end{theorem}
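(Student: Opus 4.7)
The plan is to follow the second half of the proof of Theorem \ref{theorem_Mills_type_Gauss}, replacing the uniform kernel bound with the affine bound $\tau_v(x) \leq a|x-\mu| + b$ and reading off the corresponding first-order ODE inequality. Without loss of generality I take $\mu = 0$ (translate $X$ by $\mu$, which leaves the Stein kernel and the hypothesis unchanged). The whole argument then reduces to monotonicity of one explicit function on $[0,\infty)$, to which the Mills-type bounds are attached by Markov's inequality.

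The central observation is the differential identity
\begin{equation*}
V(r) := \mathbb{E}\bigl[X\,\mathbf{1}_{\{X \geq r\}}\bigr] = \int_{r}^{+\infty} y\,p(y)\,dy = \tau_v(r)\,p(r),\qquad r\in I(\nu),
\end{equation*}
which is exactly formula (\ref{kernel_formula}) read off directly. (Alternatively, and in the spirit of the derivation of $V(r)\leq p(r)/(cr)$ in the proof of Theorem \ref{theorem_Mills_type_Gauss}, one can plug $\varphi(x) = \min\{(x-r)^+,\varepsilon\}$ into the Stein identity (\ref{stein_kernel_equation}), divide by $\varepsilon$ and let $\varepsilon\to 0^+$.) Differentiating gives $V'(r) = -r p(r) = -r V(r)/\tau_v(r)$, hence
\begin{equation*}
(\log V)'(r) = -\frac{r}{\tau_v(r)} \leq -\frac{r}{ar+b}
\end{equation*}
for every $r$ in $I(\nu)\cap[0,\infty)$, where the inequality uses the sub-linear hypothesis on $\tau_v$.

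Using the partial fraction decomposition $r/(ar+b) = 1/a - b/\bigl(a(ar+b)\bigr)$ and integrating in $r$ from $0$, one finds that
\begin{equation*}
\log V(r) + \frac{r}{a} - \frac{b}{a^2}\log(ar+b) = \log T(r)
\end{equation*}
is non-increasing on $[0,\infty)\cap I(\nu)$, and continuity extends the monotonicity to all of $[0,\infty)$ (outside the support everything is zero). This is exactly the claimed monotonicity of $T$. The tail bound (\ref{ineq_Mills_right_gamma}) then follows from Markov's inequality $r\,\mathbb{P}(X-\mu\geq r)\leq V(r)$ combined with $T(r)\leq T(0)$.

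For the two-sided bound (\ref{ineq_Mills_left_gamma}) I would repeat the argument on the left tail: since $\int(y-\mu)\,d\nu = 0$, formula (\ref{kernel_formula}) also yields $\tau_v(x)\,p(x) = -\int_{-\infty}^{x}(y-\mu)\,p(y)\,dy$, so the function $W(r) := \mathbb{E}[(-X)\mathbf{1}_{\{X\leq -r\}}]$ satisfies $W(r) = \tau_v(-r)p(-r)$ and $(\log W)'(r) = -r/\tau_v(-r) \leq -r/(ar+b)$ (using $\tau_v(-r)\leq ar+b$ for $r\geq 0$), so the same monotonicity holds for $(ar+b)^{-b/a^2}e^{r/a}W(r)$. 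Adding the two tail bounds gives (\ref{ineq_Mills_left_gamma}). I do not anticipate a real obstacle; the only substantive step is the identity $V = \tau_v\cdot p$, which is built into the explicit formula for the Stein kernel, and everything else is a one-line ODE integration.
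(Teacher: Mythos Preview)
Your proof is correct and follows essentially the same route as the paper's: both reduce to the differential inequality $(\log V)'(r)\le -r/(ar+b)$ for $V(r)=\mathbb{E}[(X-\mu)\mathbf{1}_{\{X-\mu\ge r\}}]$ and then integrate. The only cosmetic difference is that you invoke the identity $V(r)=\tau_\nu(r)\,p(r)$ directly from formula (\ref{kernel_formula}), whereas the paper recovers the same inequality by plugging the test function $U(x)=\min\{(x-r)_+,\varepsilon\}$ into the covariance bound (\ref{cov_ineq_stein}) and letting $\varepsilon\to 0^+$---precisely the alternative you mention parenthetically.
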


The concentration bounds (\ref{ineq_Mills_right_gamma})\ and (\ref%
{ineq_Mills_left_gamma}), that seem to be new, have an interest for large
values of $r$, where they improve upon Theorem \ref%
{theorem_concentration_gamma} if $a>0$, due to the factor $2$ in front of
the constant $a$ in the right-hand side of (\ref{ineq_sub_Gamma}).

\begin{proof}
The proof is essentially an adaptation of the proof of Theorem \ref%
{theorem_Mills_type_Gauss}. Without loss of generality, assume that the
support of $p$ is $\mathbb{R}$ and denote $g\left( X\right) =X-\mu $ where $%
X\sim \nu $. Take $f=U\left( g\right) $ where $U$ is a non-decreasing
piecewise differentiable function on $\mathbb{R}$. Applying (\ref%
{cov_ineq_stein}), we get%
\begin{equation}
\mathbb{E}\left[ gU\left( g\right) \right] \leq \mathbb{E}\left[ \tau _{\nu
}U^{\prime }\left( g\right) \right] \leq a\mathbb{E}\left[ \left\vert
g\right\vert U^{\prime }\left( g\right) \right] +b\mathbb{E}\left[ U^{\prime
}\left( g\right) \right] \text{ .}  \label{ineq_cov_U}
\end{equation}%
Let $G$ be the distribution function of $g\left( X\right) $. Given $r>0$ and 
$\varepsilon >0$, applying (\ref{ineq_cov_U}) to the function $U\left(
x\right) =\min \left\{ \left( x-r\right) _{+},\varepsilon \right\} $ leads to%
\begin{equation*}
\int_{r}^{r+\varepsilon }x\left( x-r\right) dG\left( x\right) +\varepsilon
\int_{r+\varepsilon }^{+\infty }xdG\left( x\right) \leq
\int_{r}^{r+\varepsilon }(ax+b)dG\left( x\right) \text{ .}
\end{equation*}%
Dividing by $\varepsilon $ and letting $\varepsilon $ tend to $0$, we
obtain, for all $r>0$,%
\begin{equation*}
\int_{r}^{\infty }xdG\left( x\right) \leq (ar+b)p\left( r+\mu \right) \text{
.}
\end{equation*}%
Thus, the function $V\left( r\right) =\int_{r}^{+\infty }xdG\left( x\right)
=\int_{r}^{+\infty }xp\left( x+\mu \right) dx$ satisfies the differential
inequality $V\left( r\right) \leq -(a+\frac{b}{r})V^{\prime }\left( r\right) 
$, that is%
\begin{equation*}
\left( \log V\left( r\right) \right) ^{\prime }\leq -\frac{r}{ar+b}=\frac{b}{%
a^{2}}\frac{a}{ar+b}-\frac{1}{a}\text{ },
\end{equation*}%
which is equivalent to saying that $\log V\left( r\right) +r/a-ba^{-2}\ln
\left( ar+b\right) $ is non-increasing, and therefore the function $T\left(
r\right) $ is non-increasing on $\mathbb{R}_{+}$. This directly gives (\ref%
{ineq_Mills_right_gamma}). By applying (\ref{ineq_Mills_right_gamma}) to the
deviations at the right of the mean for $-X$, we also get%
\begin{equation*}
\mathbb{P}\left( X\leq \mu -r\right) \leq \mathbb{E}\left( X-\mu \right) _{-}%
\frac{\left( ar+b\right) ^{b/a^{2}}e^{-r/a}}{r}\text{ .}
\end{equation*}%
Combining the last inequality with (\ref{ineq_Mills_right_gamma}) yields (%
\ref{ineq_Mills_left_gamma}).
\end{proof}

Let us now state a more general theorem.

\begin{theorem}
\label{theorem_concentration_gene}Assume that $\nu $ has a finite first
moment, a density $p$ with respect to the Lebesgue measure that has a
connected support and denote $\tau _{\nu }$ its Stein kernel. Set $X$ a
random variable of distribution $\nu $. Take $f$ a $1$-Lipschitz function
with mean zero with respect to $\nu $ and assume that $f$ has an exponential
moment with respect to $\nu $, that is there exists $a>0$ such that $\mathbb{%
E}\left[ e^{af\left( X\right) }\right] <+\infty $. Then for any $\lambda \in
\left( 0,a\right) $,%
\begin{equation}
\mathbb{E}\left[ e^{\lambda f\left( X\right) }\right] \leq \mathbb{E}\left[
e^{\lambda ^{2}\tau _{\nu }\left( X\right) }\right] \text{ .}
\label{upper_Laplace}
\end{equation}%
Consequently, if we denote $\psi _{\tau }\left( \lambda \right) =\ln \mathbb{%
E}\left[ e^{\lambda ^{2}\tau _{\nu }\left( X\right) }\right] \in \left[
0,+\infty \right] $ and $\psi _{\tau }^{\ast }\left( t\right) =\sup_{\lambda
\in \left( 0,a\right) }\left\{ t\lambda -\psi _{\tau }\left( \lambda \right)
\right\} $ the Fenchel-Legendre dual function of $\psi _{\lambda }$, then
for any $r>0$,%
\begin{equation}
\mathbb{P}\left( f\left( X\right) >r\right) \vee \mathbb{P}\left( f\left(
X\right) <-r\right) \leq \exp \left( -\psi _{\tau }^{\ast }\left( r\right)
\right) \text{ .}  \label{ineq_concen_gene}
\end{equation}
\end{theorem}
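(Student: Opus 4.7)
The plan is to first convert the $1$-Lipschitz hypothesis on $f$ into a differential inequality for the Laplace transform $L(\lambda) := \mathbb{E}[e^{\lambda f(X)}]$ via the covariance inequality of Proposition \ref{theorem_cov_ineq}, then to invoke the generic Laplace-bounding Lemma \ref{Lemma_upper_Laplace} (alluded to in the introduction) to pass from that differential inequality to the clean comparison (\ref{upper_Laplace}), and finally to derive the tail bound (\ref{ineq_concen_gene}) by a standard Chernoff step combined with Legendre--Fenchel duality.

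Concretely, I would first fix $\lambda \in (0,a)$ and observe that by convexity of the exponential and the hypothesis $\mathbb{E}[e^{a f(X)}]<+\infty$, the function $L$ is finite and smooth on $[0,a)$ with $L(0)=1$. Since $\mathbb{E}[f]=0$, differentiating under the integral gives $L'(\lambda) = \mathbb{E}[f(X) e^{\lambda f(X)}] = \cov(e^{\lambda f}, f)$. I would then apply Proposition \ref{theorem_cov_ineq} with $g = e^{\lambda f}$ and $h = f$; since $f$ is $1$-Lipschitz, $|g'| = \lambda |f'| e^{\lambda f} \leq \lambda e^{\lambda f}$, and the covariance inequality (\ref{cov_ineq_stein}) delivers the key differential inequality
\begin{equation*}
L'(\lambda) \leq \lambda \, \mathbb{E}\bigl[\tau_\nu(X)\, e^{\lambda f(X)}\bigr], \qquad \lambda \in (0,a).
\end{equation*}
Strictly speaking, Proposition \ref{theorem_cov_ineq} is stated for $g \in L_\infty(\nu)$, so a routine truncation $e^{\lambda f} \mapsto e^{\lambda f} \wedge n$ combined with monotone/dominated convergence (justified by the exponential moment) will handle the unbounded case. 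Plugging this differential inequality, together with $L(0)=1$ and the non-negative witness $\tau_\nu(X)$, into Lemma \ref{Lemma_upper_Laplace} should directly yield (\ref{upper_Laplace}): $L(\lambda) \leq \mathbb{E}[e^{\lambda^2 \tau_\nu(X)}] = e^{\psi_\tau(\lambda)}$.

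The concentration bound (\ref{ineq_concen_gene}) is then routine: for $r > 0$ and any $\lambda \in (0,a)$, Markov's inequality applied to $e^{\lambda f(X)}$ gives $\mathbb{P}(f(X) > r) \leq e^{-\lambda r} L(\lambda) \leq \exp(\psi_\tau(\lambda) - \lambda r)$; optimising over $\lambda \in (0,a)$ yields exactly $\exp(-\psi_\tau^{*}(r))$. The left tail $\mathbb{P}(f(X) < -r)$ is obtained by running the same argument with $-f$ in place of $f$, which is also $1$-Lipschitz and mean zero; since $\tau_\nu$ depends only on $\nu$ and not on the sign of $f$, the same function $\psi_\tau$ (and hence $\psi_\tau^{*}$) is produced. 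The main obstacle is the middle step: the right-hand side of the differential inequality couples $\tau_\nu$ and $f$ inside the same exponential, and decoupling them to recover the clean bound (\ref{upper_Laplace}) involving the Laplace transform of $\tau_\nu$ alone is precisely the non-trivial content that must be supplied by the generic Lemma \ref{Lemma_upper_Laplace}.
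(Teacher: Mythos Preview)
Your proposal is correct and follows essentially the same route as the paper: derive $\mathbb{E}[\lambda f(X)e^{\lambda f(X)}]\le \mathbb{E}[\lambda^{2}\tau_{\nu}(X)e^{\lambda f(X)}]$ from the covariance inequality (\ref{cov_ineq_stein}), then invoke Lemma~\ref{Lemma_upper_Laplace}, then Chernoff. One cosmetic clarification: Lemma~\ref{Lemma_upper_Laplace} is not a Gronwall-type statement consuming the whole function $\lambda\mapsto L(\lambda)$; it is applied at each fixed $\lambda$ with $g=\lambda f$ and $h=\lambda^{2}\tau_{\nu}$, and its proof is a one-line entropy duality (the initial condition $L(0)=1$ plays no role).
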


Theorem \ref{theorem_concentration_gene} states that the concentration of
Lipschitz functions taken on a real random variable with existing Stein
kernel is controlled by the behavior of the exponential moments of the Stein
kernel itself - if it indeed admits finite exponential moments.

Let us now briefly detail how to recover from Theorem \ref%
{theorem_concentration_gene} some results of Theorems \ref%
{theorem_Mills_type_Gauss} and \ref{theorem_concentration_gamma}, although
with less accurate constants. If $\left\Vert \tau _{\nu }\right\Vert
_{\infty }<+\infty $, then inequality (\ref{upper_Laplace}) directly implies%
\begin{equation*}
\mathbb{E}\left[ e^{\lambda f\left( X\right) }\right] \leq e^{\lambda
^{2}\left\Vert \tau _{\nu }\right\Vert _{\infty }}\text{ ,}
\end{equation*}%
which gives%
\begin{equation*}
\mathbb{P}\left( f\left( X\right) >r\right) \vee \mathbb{P}\left( f\left(
X\right) <-r\right) \leq \exp \left( -\frac{r^{2}}{4\left\Vert \tau _{\nu
}\right\Vert _{\infty }}\right) \text{ .}
\end{equation*}%
The latter inequality takes the form of Inequality (\ref%
{Gaussian_type_concen}) of Theorem \ref{theorem_Mills_type_Gauss}, although
with a factor $1/2$ in the argument of the exponential in the right-hand
side of the inequality.

Assume now, as in Theorem \ref{theorem_concentration_gamma}, that the Stein
kernel $\tau _{\nu }$ is sub-linear, that is there exist $a,b\in \mathbb{R}%
_{+}$ such that $\tau _{\nu }\left( x\right) \leq a\left( x-\mu \right) +b$,
where $\mu $ is the mean value of $\nu $. Inequality (\ref{upper_Laplace})
implies in this case,%
\begin{equation}
\mathbb{E}\left[ e^{\lambda f\left( X\right) }\right] \leq \mathbb{E}\left[
e^{a\lambda ^{2}\left( X-\mu \right) }\right] e^{b\lambda ^{2}}\text{ .}
\label{upper_Laplace_Gamma}
\end{equation}%
The latter inequality being valid for any $f$ being $1$-Lipschitz and
centered with respect to $\nu $, we can apply it for $f\left( X\right)
=X-\mu $. This gives%
\begin{equation*}
\mathbb{E}\left[ e^{\lambda \left( X-\mu \right) }\right] \leq \mathbb{E}%
\left[ e^{a\lambda ^{2}\left( X-\mu \right) }\right] e^{b\lambda ^{2}}\text{
.}
\end{equation*}%
Now, considering $\lambda <a^{-1}$, we have by H\"{o}lder's inequality, $%
\mathbb{E}\left[ e^{a\lambda ^{2}\left( X-\mu \right) }\right] \leq \mathbb{E%
}\left[ e^{\lambda \left( X-\mu \right) }\right] ^{a\lambda }$. Plugging
this estimate in the last inequality and rearranging the terms of the
inequality gives%
\begin{equation*}
\mathbb{E}\left[ e^{\lambda \left( X-\mu \right) }\right] \leq e^{\frac{%
b\lambda ^{2}}{1-\lambda a}}\text{ .}
\end{equation*}%
Going back to inequality (\ref{upper_Laplace_Gamma}), we obtain, for any $%
\lambda \in \left( 0,a^{-1}\right) $,%
\begin{equation*}
\mathbb{E}\left[ e^{\lambda f\left( X\right) }\right] \leq \mathbb{E}\left[
e^{\lambda \left( X-\mu \right) }\right] ^{a\lambda }e^{b\lambda ^{2}}\leq
e^{b\lambda ^{2}\left( \frac{\lambda a}{1-\lambda a}+1\right) }=e^{\frac{%
b\lambda ^{2}}{1-\lambda a}}\text{ .}
\end{equation*}%
By the use of Cram\`{e}r-Chernoff method, this gives the result of Theorem %
\ref{theorem_concentration_gamma}, although with a constant $1/2$ in the
argument of the exponential term controlling the deviations.

\begin{proof}
First note that Inequality (\ref{ineq_concen_gene}) is direct consequence of
Inequality (\ref{upper_Laplace}) via the use of the Cram\`{e}r-Chernoff
method (see for instance Section 2.2 of \cite{MR3185193}). To prove
Inequality (\ref{upper_Laplace}), also note that by Lemma \ref%
{Lemma_upper_Laplace} below, it suffices to prove that for any $\lambda \in
\left( 0,a\right) $,%
\begin{equation}
\mathbb{E}\left[ \lambda f\left( X\right) e^{\lambda f\left( X\right) }%
\right] \leq \mathbb{E}\left[ \lambda ^{2}\tau _{\nu }\left( X\right)
e^{\lambda f\left( X\right) }\right] \text{ }.
\label{ineq_upper_Laplace_diff}
\end{equation}%
Take $\lambda \in \left( 0,a\right) $, it holds by identity (\ref%
{cov_id_connected}), 
\begin{eqnarray*}
\mathbb{E}\left[ f\left( X\right) e^{\lambda f\left( X\right) }\right] &=&%
\cov%
\left( f\left( X\right) ,e^{\lambda f\left( X\right) }\right) \\
&=&\mathbb{E}\left[ \lambda f^{\prime }\left( X\right) \mathcal{\bar{L}}%
\left( \lambda f\right) \left( X\right) e^{\lambda f\left( X\right) }\right] 
\text{ .}
\end{eqnarray*}%
Hence, we obtain%
\begin{equation*}
\mathbb{E}\left[ f\left( X\right) e^{\lambda f\left( X\right) }\right] \leq
\lambda ^{2}\mathbb{E}\left[ \left\vert f^{\prime }\left( X\right)
\right\vert \tau _{\nu }\left( X\right) e^{\lambda f\left( X\right) }\right]
\leq \mathbb{E}\left[ \lambda ^{2}\tau _{\nu }\left( X\right) e^{\lambda
f\left( X\right) }\right] \text{ .}
\end{equation*}%
Inequality (\ref{ineq_upper_Laplace_diff}) is thus proved, which completes
the proof.
\end{proof}

\begin{lemma}
\label{Lemma_upper_Laplace} Take $X$ a random variable on a measurable space 
$\left( \mathcal{X},\mathcal{T}\right) $. Take $g$ and $h$ two measurable
functions from $\mathcal{X}$ to $\mathbb{R}$ such that%
\begin{equation}
\mathbb{E}\left[ g\left( X\right) e^{g\left( X\right) }\right] \leq \mathbb{E%
}\left[ h\left( X\right) e^{g\left( X\right) }\right] <+\infty \text{ .}
\label{Assump_Lem}
\end{equation}%
Then it holds,%
\begin{equation}
\mathbb{E}\left[ e^{g\left( X\right) }\right] \leq \mathbb{E}\left[
e^{h\left( X\right) }\right] \text{ .}  \label{ineq_Laplace_Lem}
\end{equation}
\end{lemma}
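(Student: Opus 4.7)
The plan is to reduce the claim to a pointwise convexity inequality, then integrate. The key elementary fact I would use is that for any positive reals $u,v > 0$,
\begin{equation*}
u(\log u - \log v) \geq u - v,
\end{equation*}
which follows from $\log t \leq t - 1$ applied to $t = v/u$ (equivalently, this is the nonnegativity of the Bregman divergence associated with $x \log x$).

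First I would set $u = e^{g(X)}$ and $v = e^{h(X)}$, so that $\log u = g(X)$ and $\log v = h(X)$, obtaining the pointwise bound
\begin{equation*}
\bigl(g(X) - h(X)\bigr) e^{g(X)} \geq e^{g(X)} - e^{h(X)}.
\end{equation*}
Taking expectations on both sides and rearranging gives
\begin{equation*}
\mathbb{E}\bigl[e^{g(X)}\bigr] - \mathbb{E}\bigl[e^{h(X)}\bigr] \leq \mathbb{E}\bigl[g(X) e^{g(X)}\bigr] - \mathbb{E}\bigl[h(X) e^{g(X)}\bigr].
\end{equation*}
The right-hand side is nonpositive by the hypothesis \eqref{Assump_Lem}, so \eqref{ineq_Laplace_Lem} follows immediately.

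The only mild subtlety is justifying that all the integrals above are well-defined and that the rearrangement is legitimate: the hypothesis guarantees that both $\mathbb{E}[g(X) e^{g(X)}]$ and $\mathbb{E}[h(X) e^{g(X)}]$ are finite, and the pointwise inequality shows that $e^{h(X)} \geq e^{g(X)} - (g(X) - h(X)) e^{g(X)}$, so $\mathbb{E}[e^{h(X)}] \in (0, +\infty]$ is well-defined (possibly infinite, in which case the conclusion is trivial, or finite, in which case the rearrangement is immediate). No step here requires any structural assumption beyond measurability and the integrability built into \eqref{Assump_Lem}, which is exactly why this lemma is advertised as a generic tool for Chernoff-type arguments.
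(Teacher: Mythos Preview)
Your proof is correct and, in fact, more elementary than the paper's. The paper proves the lemma by invoking the duality formula for the entropy: setting $\beta=\ln\mathbb{E}[e^{h(X)}]$ and $U=h(X)-\beta$ so that $\mathbb{E}[e^{U}]=1$, it uses $\mathbb{E}[Ue^{g(X)}]\leq \Ent(e^{g(X)})$ together with the hypothesis to conclude. Your argument bypasses this machinery by going straight to the pointwise inequality $e^{x}\geq 1+x$ (equivalently $u(\log u-\log v)\geq u-v$), which is precisely the inequality that underlies the entropy duality formula itself. So the two proofs are the same at the level of the core estimate; you simply unpack what the paper cites from an external reference. Your route has the advantage of being fully self-contained, while the paper's formulation makes explicit the connection to the variational characterization of entropy, which may be suggestive for extensions. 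One small point you left implicit: $\mathbb{E}[e^{g(X)}]<+\infty$ follows from the finiteness of $\mathbb{E}[g(X)e^{g(X)}]$ (since $e^{g}\leq g e^{g}$ on $\{g\geq 1\}$ and $e^{g}\leq e$ on $\{g<1\}$), so the rearrangement in the non-trivial case is indeed fully justified.
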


Lemma \ref{Lemma_upper_Laplace} summarizes the essence of the argument used
in the proof of Theorem 2.3 of \cite{MR1836739}. We could not find a
reference in the literature for Lemma \ref{Lemma_upper_Laplace}. We point
out that Lemma \ref{Lemma_upper_Laplace} may have an interest by itself as
it should be very handy when dealing with concentration inequalities using
the Cram\`{e}r-Chernoff method. Its scope may thus go beyond our framework
related to the behavior of the Stein kernel.

\begin{proof}
Note that if $\mathbb{E}\left[ e^{h\left( X\right) }\right] =+\infty $ then
Inequality (\ref{ineq_Laplace_Lem}) is satisfied. We assume now that $%
\mathbb{E}\left[ e^{h\left( X\right) }\right] <+\infty $ and $\beta =\ln
\left( \mathbb{E}\left[ e^{h\left( X\right) }\right] \right) $. By setting $%
U=h\left( X\right) -\beta $, we get $\mathbb{E}\left[ e^{U}\right] =1$ and
so, by the duality formula for the entropy (see for instance Theorem 4.13 in 
\cite{MR3185193}), we have%
\begin{equation*}
\mathbb{E}\left[ Ue^{g\left( X\right) }\right] \leq 
\ent%
\left( e^{g\left( X\right) }\right) =\mathbb{E}\left[ g\left( X\right)
e^{g\left( X\right) }\right] -\mathbb{E}\left[ e^{g\left( X\right) }\right]
\ln \left( \mathbb{E}\left[ e^{g\left( X\right) }\right] \right) \text{ .}
\end{equation*}%
Furthermore,%
\begin{equation*}
\mathbb{E}\left[ g\left( X\right) e^{g\left( X\right) }\right] -\beta 
\mathbb{E}\left[ e^{g\left( X\right) }\right] \leq \mathbb{E}\left[
Ue^{g\left( X\right) }\right] \text{ .}
\end{equation*}%
Putting the above inequalities together, we obtain $\beta \geq \ln \left( 
\mathbb{E}\left[ e^{g\left( X\right) }\right] \right) $, which is equivalent
to (\ref{ineq_Laplace_Lem}).
\end{proof}

\section{Tail bounds\label{section_density_formula_tail_bounds}}

In the following theorem, we establish lower tail bounds when the Stein
kernel is uniformly bounded away from zero. In particular, the support of
the measure is $\mathbb{R}$ in this case, as can be seen from the explicit
formula (\ref{kernel_formula}).

\begin{theorem}
\label{prop_kernel_lower_bound}Take a real random variable $X$ of
distribution $\nu $ with density $p$ with respect to the Lebesgue measure on 
$\mathbb{R}$. Assume that $\mathbb{E}\left[ X\right] =0$, $p$ has a
connected support and denote $\tau _{\nu }$ the Stein kernel of $\nu $. If $%
\tau _{\nu }\geq \sigma _{\min }^{2}>0$ $\nu $-almost surely, then the
density $p$ of $\nu $ is positive on $\mathbb{R}$ and the function%
\begin{equation*}
R\left( x\right) =e^{x^{2}/2\sigma _{\min }^{2}}\int_{x}^{+\infty }yp\left(
y\right) dy
\end{equation*}%
is nondecreasing on $\mathbb{R}_{+}$. In particular, for any $x\geq 0$,%
\begin{equation}
\int_{x}^{+\infty }yp\left( y\right) dy\geq \mathbb{E}\left[ (X)_{+}\right]
e^{-x^{2}/2\sigma _{\min }^{2}}\text{ .}  \label{mino_tail_unif}
\end{equation}%
By symmetry, for any $x\leq 0$,%
\begin{equation}
_{-}\int_{-\infty }^{x}yp\left( y\right) dy\geq \mathbb{E}\left[ (X)_{-}%
\right] e^{-x^{2}/2\sigma _{\min }^{2}}\text{ .}  \label{mino_tail_unif_left}
\end{equation}%
Assume in addition that the function $L\left( x\right) =x^{1+\beta }p\left(
x\right) $ is nonincreasing on $\left[ s,+\infty \right) ,$ $s>0$. Then for
all $x\geq s$, it holds%
\begin{equation}
\mathbb{P}\left( X\geq x\right) \geq \left( 1-\frac{1}{\beta }\right) \frac{%
\mathbb{E}\left[ \left( X\right) _{+}\right] }{x}\exp \left( -\frac{x^{2}}{%
2\sigma _{\min }^{2}}\right) \text{ .}  \label{mino_tail_poly}
\end{equation}%
Alternatively, assume that there exists $\alpha \in \left( 0,2\right) $ such
that $\lim \sup_{x\rightarrow +\infty }x^{-\alpha }\log \tau _{\nu }\left(
x\right) <+\infty $. Then for any $\delta \in \left( 0,2\right) $, there
exist $L,x_{0}>0$ such that, for all $x>x_{0}$,%
\begin{equation}
\mathbb{P}\left( X\geq x\right) \geq \frac{L}{x}\exp \left( -\frac{x^{2}}{%
\left( 2-\delta \right) \sigma _{\min }^{2}}\right) \text{ .}
\label{mino_tail_limsup}
\end{equation}
\end{theorem}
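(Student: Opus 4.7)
The backbone is the Stein identity $\tau_\nu(x) p(x) = \int_x^{+\infty} y\, p(y)\, dy$ (formula (\ref{kernel_formula}) with $\mu = 0$) together with the representations from Proposition \ref{theorem_formulas}. Positivity of $p$ on $\mathbb{R}$ follows quickly: (\ref{formula_density_2}) gives $\tau_\nu(x) p(x) = (\mathbb{E}[|X|]/2) \exp(-\int_0^x y/\tau_\nu(y)\, dy)$ on $I(\nu)$, and the hypothesis $\tau_\nu \geq \sigma_{\min}^2$ bounds the integrand by $y/\sigma_{\min}^2$, hence the integral by $x^2/(2\sigma_{\min}^2)$. So the right-hand side stays bounded away from zero as $x$ approaches any finite endpoint $b$, contradicting $\int_x^b t p(t)\,dt \to 0$. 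Thus the support is $\mathbb{R}$ and the density formula gives $p > 0$ everywhere.

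\textbf{Monotonicity of $R$ and (\ref{mino_tail_unif})--(\ref{mino_tail_unif_left}).} Using $\frac{d}{dx}\int_x^{+\infty} y p(y)\,dy = -x p(x)$ and the Stein identity, I compute
\[
R'(x) = x p(x) e^{x^2/(2\sigma_{\min}^2)} \Bigl(\frac{\tau_\nu(x)}{\sigma_{\min}^2} - 1\Bigr) \geq 0 \quad \text{for } x \geq 0.
\]
Hence $R$ is nondecreasing on $\mathbb{R}_+$, so $R(x) \geq R(0) = \mathbb{E}[(X)_+]$, giving (\ref{mino_tail_unif}); the symmetric bound (\ref{mino_tail_unif_left}) follows by running the same argument for $-X$, whose Stein kernel is also bounded below by $\sigma_{\min}^2$.

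\textbf{Polynomial tail (\ref{mino_tail_poly}).} I apply Proposition \ref{theorem_formulas} with $h(y) = y$, $x_0 = 0$, producing two equivalent representations of $\mathbb{P}(X \geq x)$: the stated form (\ref{formula_tail}),
\[
\mathbb{P}(X \geq x) = \mathbb{E}[(X)_+] \Bigl(\frac{T_h(x)}{x} - \int_x^{+\infty} \frac{T_h(y)}{y^2}\,dy\Bigr),
\]
and, from its derivation, $\mathbb{P}(X \geq x) = \mathbb{E}[(X)_+] \int_x^{+\infty} T_h(y)/\tau_\nu(y)\,dy$. The crucial auxiliary estimate is that monotonicity of $L(y) = y^{1+\beta} p(y)$ forces $\tau_\nu(y) \leq y^2/(\beta - 1)$ for $y \geq s$: indeed $\tau_\nu(y) p(y) = \int_y^{+\infty} L(t) t^{-\beta}\,dt \leq L(y)\, y^{1-\beta}/(\beta - 1) = y^2 p(y)/(\beta - 1)$. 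Substituting this in the integrated representation gives $\mathbb{P}(X \geq x) \geq (\beta - 1) \mathbb{E}[(X)_+] \int_x^{+\infty} T_h(y)/y^2\,dy$; feeding this lower bound on the integral term back into the explicit representation and rearranging yields $\mathbb{P}(X \geq x) \geq (1 - 1/\beta) \mathbb{E}[(X)_+] T_h(x)/x$. Since $T_h(x) \geq e^{-x^2/(2\sigma_{\min}^2)}$, this closes (\ref{mino_tail_poly}).

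\textbf{Subquadratic log case (\ref{mino_tail_limsup}) and main obstacle.} Starting again from $\mathbb{P}(X \geq x) = \mathbb{E}[(X)_+] \int_x^{+\infty} T_h(y)/\tau_\nu(y)\,dy$, the $\limsup$ hypothesis yields $\tau_\nu(y) \leq e^{M' y^\alpha}$ for $y$ large and any $M'$ strictly above the $\limsup$ value, while $T_h(y) \geq e^{-y^2/(2\sigma_{\min}^2)}$ always. For prescribed $\delta \in (0,2)$, since $\alpha < 2$, elementary algebra gives $M' y^\alpha \leq \delta y^2/[2(2-\delta)\sigma_{\min}^2]$ for $y \geq x_0(\delta)$, which compresses the exponent to at most $-y^2/((2-\delta)\sigma_{\min}^2)$; combining with a classical Mills-type lower bound $\int_x^{+\infty} e^{-y^2/(2\sigma^2)}\,dy \geq c\sigma^2 x^{-1} e^{-x^2/(2\sigma^2)}$ for $x$ large then produces (\ref{mino_tail_limsup}). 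The most delicate step is the self-bootstrapping in the polynomial case, where the same quantity $\mathbb{P}(X \geq x)$ must be read through two different integral representations and combined with the a priori upper bound on $\tau_\nu$ extracted from the monotonicity of $L$; the remaining pieces are routine consequences of the Stein identity and standard Gaussian tail estimates.
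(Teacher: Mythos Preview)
Your proposal is correct, and for the monotonicity of $R$, the bounds (\ref{mino_tail_unif})--(\ref{mino_tail_unif_left}), and the subquadratic case (\ref{mino_tail_limsup}) it runs essentially along the paper's lines (you compute $R'$ directly via the Stein identity, whereas the paper arrives at the same differential inequality through a limiting test-function argument; the subquadratic case is the same density lower bound plus a Gaussian tail estimate in both).

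The one place where your route genuinely diverges is the polynomial tail (\ref{mino_tail_poly}). The paper proceeds more directly: from the monotonicity of $L(x)=x^{1+\beta}p(x)$ it extracts the \emph{upper} tail bound $\mathbb{P}(X\geq x)\leq xp(x)/\beta$, then plugs this into the elementary integration-by-parts identity
\[
V(c)=\int_c^{+\infty} y\,p(y)\,dy = c\,\mathbb{P}(X\geq c)+\int_c^{+\infty}\mathbb{P}(X\geq x)\,dx
\]
to obtain $V(c)\leq c\,\mathbb{P}(X\geq c)+V(c)/\beta$, i.e.\ $\mathbb{P}(X\geq c)\geq(1-1/\beta)V(c)/c$, and finishes with (\ref{mino_tail_unif}). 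Your path instead converts the monotonicity of $L$ into the Stein-kernel upper bound $\tau_\nu(y)\leq y^{2}/(\beta-1)$ and then bootstraps the two representations of $\mathbb{P}(X\geq x)$ from Proposition~\ref{theorem_formulas}. Both arguments are valid and yield the identical constant $(1-1/\beta)$; the paper's version is shorter and avoids invoking the $T_h$ machinery, while yours has the virtue of exhibiting explicitly how the hypothesis constrains $\tau_\nu$ itself.
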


The results presented in Theorem \ref{prop_kernel_lower_bound} can be found
in \cite{MR2556018} under the additional assumption, related to the use of
Malliavin calculus, that the random variable $X\in \mathbf{D}^{1,2}$.

\begin{proof}
For any smooth function $\varphi $ nondecreasing,%
\begin{equation*}
\mathbb{E}\left[ X\varphi \left( X\right) \right] =\mathbb{E}\left[ \tau
_{\nu }\left( X\right) \varphi ^{\prime }\left( X\right) \right] \geq \sigma
_{\min }^{2}\mathbb{E}\left[ \varphi ^{\prime }\left( X\right) \right] \text{
.}
\end{equation*}%
Take $\varphi \left( x\right) =\min \left\{ \left( x-c\right)
_{+},\varepsilon \right\} $, for some $c\in \mathbb{R}$ and $\varepsilon >0$%
. Then%
\begin{equation*}
\mathbb{E}\left[ X\varphi \left( X\right) \right] =\int_{c}^{c+\varepsilon
}x\left( x-c\right) p\left( x\right) dx+\varepsilon \int_{c+\varepsilon
}^{+\infty }xp\left( x\right) dx
\end{equation*}%
and $\mathbb{E}\left[ \varphi ^{\prime }\left( X\right) \right] =\mathbb{P}%
\left( X\in \left( c,c+\varepsilon \right] \right) $. Dividing the latter
two terms by $\varepsilon $ and letting $\varepsilon $ tend to zero gives,%
\begin{equation}
\int_{c}^{+\infty }xp\left( x\right) dx\geq \sigma _{\min }^{2}p\left(
c\right) \text{ .}  \label{ineq_diff}
\end{equation}%
Now set $V\left( c\right) =\int_{c}^{+\infty }xp\left( x\right) dx$.
Inequality (\ref{ineq_diff}) writes, for any $c\geq 0$,%
\begin{equation*}
\frac{c}{\sigma _{\min }^{2}}V\left( c\right) \geq -V^{\prime }\left(
c\right) \text{ .}
\end{equation*}%
Then define, for any $c\geq 0$,%
\begin{equation*}
R\left( c\right) =V\left( c\right) \exp \left( \frac{c^{2}}{2\sigma _{\min
}^{2}}\right) \text{ .}
\end{equation*}%
We can differentiate $R$ and we have%
\begin{equation*}
R^{\prime }\left( c\right) =\left( V^{\prime }\left( c\right) +\frac{c}{%
\sigma _{\min }^{2}}V\left( c\right) \right) \exp \left( \frac{c^{2}}{%
2\sigma _{\min }^{2}}\right) \geq 0\text{ .}
\end{equation*}%
In particular $R\left( c\right) \geq R\left( 0\right) $, which gives (\ref%
{mino_tail_unif}). As $\tau _{-X}\left( x\right) =\tau _{X}\left( -x\right) $%
, we deduce by symmetry that (\ref{mino_tail_unif_left}) also holds. The
proof of inequalities (\ref{mino_tail_poly}) and (\ref{mino_tail_limsup})
follows from the same arguments as in the proof of points (ii) and (ii)',
Theorem 4.3, \cite{MR2556018}. We give them, with slight modifications, for
the sake of completeness. By integration by parts, we have%
\begin{equation*}
V\left( c\right) =c\mathbb{P}\left( X\geq c\right) +\int_{c}^{+\infty }%
\mathbb{P}\left( X\geq x\right) dx\text{ .}
\end{equation*}%
We also have, for $x>0$,%
\begin{equation*}
\mathbb{P}\left( X\geq x\right) =\int_{x}^{+\infty }\frac{y^{1+\beta
}p\left( y\right) }{y^{1+\beta }}dy\leq x^{1+\beta }p\left( x\right)
\int_{x}^{+\infty }\frac{dy}{y^{1+\beta }}=\frac{xp\left( x\right) }{\beta }%
\text{ .}
\end{equation*}%
Hence,%
\begin{equation*}
V\left( c\right) \leq c\mathbb{P}\left( X\geq c\right) +\beta
^{-1}\int_{c}^{+\infty }xp\left( x\right) dx=c\mathbb{P}\left( X\geq
c\right) +\frac{V\left( c\right) }{\beta }
\end{equation*}%
or equivalently,%
\begin{equation*}
\mathbb{P}\left( X\geq c\right) \geq \left( 1-\frac{1}{\beta }\right) \frac{%
V\left( c\right) }{c}\text{ .}
\end{equation*}%
The conclusion follows by combining the latter inequality with inequality (%
\ref{mino_tail_unif}). It remains to prove (\ref{mino_tail_limsup}). By
formula (\ref{formula_tail}) applied with $h\left( y\right) \equiv y$ - note
that this is possible since by assumption $\tau _{\nu }>0$ on $\mathbb{R}$
-, it holds%
\begin{eqnarray*}
p\left( x\right) &=&\frac{\mathbb{E}\left[ \left\vert X\right\vert \right] }{%
2\tau _{\nu }\left( x\right) }\exp \left( -\int_{0}^{x}\frac{y}{\tau _{\nu
}\left( y\right) }dy\right) \\
&\geq &\frac{\mathbb{E}\left[ \left\vert X\right\vert \right] }{2\tau _{\nu
}\left( x\right) }\exp \left( -\frac{x^{2}}{2\sigma _{\min }^{2}}\right) 
\text{ .}
\end{eqnarray*}%
Let us fix $\varepsilon >0$. By assumption on $\tau _{\nu }$, we get that
there exists a positive constant $C$ such that, for $x$ large enough, 
\begin{equation*}
p\left( x\right) \geq C\exp \left( -\frac{x^{2}}{2\sigma _{\min }^{2}}%
-x^{\alpha }\right) \geq C\exp \left( -\frac{x^{2}}{(2-\varepsilon )\sigma
_{\min }^{2}}\right) \text{ .}
\end{equation*}%
Hence, for $x$ large enough,%
\begin{equation*}
\mathbb{P}\left( X\geq x\right) \geq C\int_{x}^{+\infty }\exp \left( -\frac{%
y^{2}}{(2-\varepsilon )\sigma _{\min }^{2}}\right) dy\text{ .}
\end{equation*}%
The conclusion now easily follows from the following classical inequality: $%
\int_{x}^{+\infty }e^{-y^{2}/2}dy\geq (x/(1+x^{2}))\exp \left(
-x^{2}/2\right) $.
\end{proof}

In the following proposition, we give some further tail bounds under a
variety of assumptions on the Stein kernel. We omit the proof as it follows
directly from the same arguments as in the proof of Corollary 4.5 in \cite%
{MR2568291}, where they are derived under the assumption that $X\in \mathbf{D%
}^{1,2}$.

\begin{proposition}
Take a real random variable $X$ of distribution $\nu $ with density $p$ with
respect to the Lebesgue measure on $\mathbb{R}$. Assume that $\mathbb{E}%
\left[ \left\vert X\right\vert \right] <+\infty $, $p$ has a connected
support and denote $\tau _{\nu }$ the Stein kernel of $\nu $. If there exist 
$c\in \left( 0,1\right) $ and $x_{0}>1$ such that for all $x>x_{0}$, $\tau
_{\nu }\left( x\right) \leq cx^{2}$, then there exists a positive constant $%
L $ such that, for all $x>x_{0}$%
\begin{equation*}
\mathbb{P}\left( X\geq x\right) \geq L\frac{\exp \left( -\int_{0}^{x}\frac{y%
}{\tau _{\nu }\left( y\right) }dy\right) }{x}\text{ .}
\end{equation*}%
If in addition $\tau _{\nu }\geq \sigma _{\min }^{2}>0$ $\nu $-almost
surely, then%
\begin{equation*}
\mathbb{P}\left( X\geq x\right) \geq \frac{L}{x}\exp \left( -\frac{x^{2}}{%
2\sigma _{\min }^{2}}\right) \text{ .}
\end{equation*}%
If there exists rather a positive constant $c_{-}\in \left( 0,c\right] $
such that $\tau _{\nu }\left( x\right) \geq c_{-}x^{2}$ for all $x>x_{0}$,
then there exists a positive constant $K$ such that for all $x>x_{0}$,%
\begin{equation*}
\mathbb{P}\left( X\geq x\right) \geq \frac{K}{x^{1+1/c_{-}}}\text{ .}
\end{equation*}%
If there exist instead $p\in \left( 0,2\right) $ and $c_{p}>0$ such that,
for all $x>x_{0}$, $\tau _{\nu }\left( x\right) \geq c_{p}x^{p}$, then there
exists a positive constant $H$ such that, for all $x>x_{0}$,%
\begin{equation*}
\mathbb{P}\left( X\geq x\right) \geq \frac{L}{x}\exp \left( -\frac{x^{2-p}}{%
\left( 2-p\right) c_{p}}\right) \text{ .}
\end{equation*}%
In the last two points, if the inequalities on $\tau _{\nu }$ in the
hypotheses are reversed, then the conclusions are also reversed, without
changing any of the constants.
\end{proposition}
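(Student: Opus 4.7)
The plan is to apply the tail formula (\ref{formula_tail}) from Proposition \ref{theorem_formulas} with the choice $h(y) = y - \mu$; after translating so that $\mu = 0$, this yields
\begin{equation*}
\mathbb{P}(X \geq x) = \mathbb{E}[(X)_+] \left(\frac{T(x)}{x} - \int_x^{+\infty} \frac{T(y)}{y^2} dy\right), \qquad x > 0,
\end{equation*}
with $T(x) = \exp(-\int_0^x y/\tau_\nu(y) dy)$. I would first verify the hypotheses of that proposition: $h$ is positive on the region of interest, differentiable, and $T(y)/y \to 0$ at the right edge (which is automatic when $b = +\infty$, a situation forced by each of the hypotheses below for $x$ sufficiently large). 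An equivalent form, useful throughout, is $\mathbb{P}(X \geq x) = \mathbb{E}[(X)_+] \int_x^{+\infty} T(y)/\tau_\nu(y) dy$, obtained by integration by parts.

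The first claim is the core quantitative step. Under $\tau_\nu(y) \leq c y^2$ for $y > x_0$, the inequality $y/\tau_\nu(y) \geq 1/(c y)$ yields the pointwise comparison $T(y)/T(x) \leq (x/y)^{1/c}$ for $x_0 < x \leq y$, so that
\begin{equation*}
\int_x^{+\infty} \frac{T(y)}{y^2} dy \leq T(x) x^{1/c} \int_x^{+\infty} y^{-1/c - 2} dy = \frac{c}{1+c} \cdot \frac{T(x)}{x}.
\end{equation*}
Since $c < 1$, the factor $c/(1+c)$ is strictly less than one, so the bracket in the tail formula is bounded below by $T(x)/((1+c) x)$, giving the first claim with $L = \mathbb{E}[(X)_+]/(1+c)$.

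The three remaining claims combine this first claim with elementary lower bounds on $T(x)$ derived directly from the additional hypotheses on $\tau_\nu$. Under $\tau_\nu \geq \sigma_{\min}^2$ almost surely, the definition of $T$ immediately gives $T(x) \geq \exp(-x^2/(2 \sigma_{\min}^2))$. Under $\tau_\nu(y) \geq c_- y^2$ on $(x_0, +\infty)$ (which is compatible with $\tau_\nu \leq c y^2$ precisely because $c_- \leq c$), splitting the defining integral of $T$ at $x_0$ and using $y/\tau_\nu(y) \leq 1/(c_- y)$ on the tail portion yields $T(x) \geq C_1 x^{-1/c_-}$. Under $\tau_\nu(y) \geq c_p y^p$ with $p < 2$ (again compatible with an upper bound $\tau_\nu \leq c y^2$ once $x_0$ is chosen large enough), the same splitting gives $T(x) \geq C_2 \exp(-x^{2-p}/((2-p) c_p))$. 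The reversed inequalities in the final sentence are handled via the trivial upper bound $\mathbb{P}(X \geq x) \leq \mathbb{E}[(X)_+] T(x)/x$ (obtained by dropping the positive correction integral) combined with matching upper bounds on $T(x)$ furnished by the same splitting argument applied to the reversed hypotheses on $\tau_\nu$.

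The main obstacle is the quantitative control of the correction integral in the first claim. The strict inequality $c < 1$ is essential there: it is what guarantees convergence of $\int_x^{+\infty} y^{-1/c - 2} dy$ and, more importantly, makes the resulting prefactor $c/(1+c)$ strictly smaller than one, so that the tail formula produces a positive quantity comparable to $T(x)/x$. Everything else reduces to routine one-dimensional estimates on the exponential factor $T$.
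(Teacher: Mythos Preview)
Your approach is correct and is exactly the route the paper intends: the paper omits the proof, stating that it follows from the arguments of Corollary~4.5 in \cite{MR2568291}, and those arguments are precisely the use of the tail formula (\ref{formula_tail}) with $h=\mathrm{Id}-\mu$ together with the pointwise comparison $T(y)/T(x)\le (x/y)^{1/c}$ to control the correction integral, followed by elementary bounds on $T$ for the remaining assertions.

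One small correction to your commentary: the condition $c<1$ is \emph{not} what makes your argument work. The integral $\int_x^{+\infty} y^{-1/c-2}\,dy$ converges for every $c>0$ (the exponent $-1/c-2$ is always below $-1$), and the resulting factor $c/(1+c)$ is strictly less than $1$ for every $c>0$. So your computation actually yields the first claim for any $c>0$, and the hypothesis $c<1$ in the statement is not needed for your line of proof. Everything else is in order.
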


\section*{Acknowledgments}

I owe thanks to Guillaume Poly for an instructive discussion about Stein's
method, to Yvik Swan for a number of pointers to the literature and to Jon
Wellner as well as Max Fathi for useful comments. I also warmly thank Michel
Bonnefont and Ald\'{e}ric Joulin for nice discussions and valuable comments
related to some (weighted) functional inequalities. Finally, I express my
gratitude to two anonymous referees for several insightful comments and
suggestions that helped to complete the study and improve the quality of the
paper.

\bibliographystyle{aalpha}
\bibliography{chern,Slope_heuristics_regression_13}

\end{document}